\definecolor{darkblue}{rgb}{0.0, 0.0, 0.55}
\newtheorem{theorem}{Theorem}[section]
\newtheorem{cor}[theorem]{Corollary}
\newtheorem{lemma}[theorem]{Lemma}
\newtheorem{prop}[theorem]{Proposition}
\newtheorem{example}[theorem]{Example}
\theoremstyle{definition}
\newtheorem{rem}[theorem]{Remark}
\DeclareFontFamily{U}{mathx}{\hyphenchar\font45}
\DeclareFontShape{U}{mathx}{m}{n}{
      <5> <6> <7> <8> <9> <10>
      <10.95> <12> <14.4> <17.28> <20.74> <24.88>
      mathx10
      }{}
\DeclareSymbolFont{mathx}{U}{mathx}{m}{n}
\DeclareMathAccent{\widecheck}{0}{mathx}{"71}
\DeclareMathAccent{\wideparen}{0}{mathx}{"75}
\def\beq{\begin{equation}
}
\def\eeq{\end{equation}}
\def\moverlay{\mathpalette\mov@rlay}
\def\mov@rlay#1#2{\leavevmode\vtop{
		\baselineskip\z@skip \lineskiplimit-\maxdimen
		\ialign{\hfil$#1##$\hfil\cr#2\crcr}}}
\newcommand{\plangle}{\moverlay{(\cr<}}
\newcommand{\prangle}{\moverlay{)\cr>}}
\newcommand{\freefstar}{\C\plangle x,x^* \prangle}
\newcommand{\freef}{\C\plangle x \prangle}
\newcommand{\lax}{\langle x,x^*\rangle}
\newcommand{\laxh}{\langle x,x^*,h,h^*\rangle}
\newcommand{\AB}{B}
\newcommand{\JK}{K}
\def\tX{{\widetilde{X}}}
\def\wtX{{\widetilde X}}
\newcommand{\wtH}{\widetilde{H}}
\newcommand{\cJ}{\mathcal{J}}
 \def\tcJ{{\widetilde \cJ}}
\def\Del{{ \Delta}}
\def\tDel{\widetilde \Delta}
\def\beq{\begin{equation}}
\def\eeq{\end{equation}}
\def\ben{\begin{enumerate}}
\def\een{\end{enumerate}}
\def\bem{\begin{bmatrix}}
\def\eem{\end{bmatrix}}
\def\cN{ {{\mathcal N}}}
\def\cP{ {{\mathcal P}}}
\def\bbB{ {\mathbb B}}
\renewcommand{\subset}{\subseteq}
\def\cN{ {\mathcal N} }
\def\cP{{\mathcal P}}
\def\cR{{ \mathcal R }}
\def\cS{{\mathcal S} }
\def\beq{\begin{equation}}
\def\eeq{\end{equation}}
\def\tcJ{\widetilde{\cJ}}
\def\cR{{\mathcal R}}
\def\Lam{{\Lambda}}
\newcommand{\df}[1]{{\bf{#1}}{\index{#1}}}
\newcommand{\mat}[1]{M_{#1}}
\def\spann{\operatorname{span}}
\def\dim{\operatorname{dim}}
\def\tcN{\widetilde{\cN}}
\newcommand{\Pdown}{P}
\newcommand{\Pup}{P_{\ast}}
\newcommand{\EE}{E}
\newcommand{\ao}{\rho}
\newcommand{\SA}{{\mathfrak{S}^-}}
\newcommand{\bx}{\mathbbm{x}}
\newcommand{\by}{\mathbbm{y}}
\newcommand{\bw}{\mathbbm{w}}
\def\CC{{\mathbb C}}
 \def\Del{{\Delta}}
\def\j1tog{ j= 1, \ldots, g }
\DeclareMathOperator{\ran}{rng}
\DeclareMathOperator{\rng}{rng}
\newcommand{\C}{\mathbb{C}}
\newcommand{\ve}{\varepsilon}
\newcommand{\Langle}{\mathop{<}\!}
\newcommand{\Rangle}{\!\mathop{>}}
\newcommand{\tOm}{\widetilde \Omega}
\newcommand{\tGm}{\widetilde{\Gamma}}
\newcommand{\YY}{\widehat{X}}
\newcommand{\KK}{\widehat{H}}
\newcommand{\ax}{\langle x \rangle}
\newcommand{\axplus}{\ax_+}
\newcommand{\axy}{\langle x,x^*\rangle}
\newcommand{\axyplus}{\axy_+}
\newcommand{\Bp}{\widehat{B}}
\newcommand{\rp}{\widehat{r}}
\newcommand{\qp}{\widehat{q}}
\newcommand{\ap}{a}
\newcommand{\bq}{b}
\newcommand{\gv}{{\tt{g}}}
\newcommand{\kv}{{\tt{k}}}
\newcommand{\hv}{{\tt{h}}}
\title[Plush noncommutative rational functions]{Plurisubharmonic noncommutative rational functions}
\author[H. Dym]{Harry Dym}
\address{Harry Dym, Department of Mathematics, Weizmann Institute of Science, Rehovot, Israel}
\email{Harry.dym@weizmann.ac.il}
\author[J.W. Helton]{J. William Helton${}^1$}
\address{J. William Helton, Department of Mathematics\\
  University of California \\
  San Diego}
\email{helton@math.ucsd.edu}
\thanks{${}^1$Research supported by the NSF grant
DMS 1500835.}
\author[I. Klep]{Igor Klep${}^{2}$}
\address{Igor Klep, Department of Mathematics, 
University of Ljubljana, Slovenia}
\email{igor.klep@fmf.uni-lj.si}
\thanks{${}^2$Supported by the 
Slovenian Research Agency grants J1-8132, N1-0057 and P1-0222. Partially supported 
by the 
Marsden Fund Council of the Royal Society of New Zealand.}
\author[S. McCullough]{Scott McCullough${}^3$}
\address{Scott McCullough, Department of Mathematics\\
  University of Florida\\ Gainesville %\\
   % Box 118105\\
   %  Gainesville, FL 32611-8105\\
   %  USA
   }
   \email{sam@math.ufl.edu}
\thanks{${}^3$Research supported by  NSF grant  DMS-1764231}
\author[J. Vol\v{c}i\v{c}]{Jurij Vol\v{c}i\v{c}${}^4$}
\address{Jurij Vol\v{c}i\v{c}, Department of Mathematics, Texas A\&M University}
\email{volcic@math.tamu.edu}
\thanks{${}^4$Research supported by the Deutsche Forschungsgemeinschaft (DFG) Grant No. SCHW 1723/1-1.}
\subjclass[2000]{47A56, 46L07, 32A99, 46L89} % 32H99,
\keywords{Plurisubharmonic function, noncommutative rational function, realization,
  convex function, free analysis}
\newcommand{\mycontentsbox}{%
{%\centerline{NOT FOR PUBLICATION}
\parskip=-1.0pt
\newpage\printindex\tableofcontents}}
\def\enddoc@text{\ifx\@empty\@translators \else\@settranslators\fi
\ifx\@empty\addresses \else\@setaddresses\fi
\newpage\mycontentsbox%\newpage\printindex
}
 \numberwithin{equation}{section}
\begin{document}

\allowdisplaybreaks

\begin{abstract}
A noncommutative (nc) function 
in $x_1,\dots,x_\gv,x_1^*,\dots,x_\gv^*$ is called
plurisubharmonic (plush) 
if its nc complex Hessian takes only positive semidefinite values on 
an nc neighborhood of $0.$
The main result of this paper shows
that an nc rational function  is plush
if and only if it is a composite of
a convex rational function with an analytic 
(no $x_j^*$) 
rational function. The  proof is entirely constructive.
Further,  a simple computable
necessary and sufficient
condition for an nc rational function   to be
plush is given
in terms of its minimal realization.\looseness=-1
\end{abstract}

\iffalse
%%%%%%%%%%%%% arXiv abstract
A noncommutative (nc) function in $x_1,\dots,x_g,x_1^*,\dots,x_g$ is called plurisubharmonic (plush) if its nc complex Hessian takes only positive semidefinite values on an nc neighborhood of 0. The main result of this paper shows that an nc rational function is plush if and only if it is a composite of a convex rational function with an analytic (no $x_j^*$) rational function. The proof is entirely constructive. Further, a simple computable necessary and sufficient condition for an nc rational function to be plush is given in terms of its minimal realization.
%%%%%%%%%%%%%%%
\fi

\maketitle

 \dottedcontents{section}[3.8em]{}{2.3em}{.4pc} 
 \dottedcontents{subsection}[6.1em]{}{3.2em}{.4pc}
 \dottedcontents{subsubsection}[8.4em]{}{4.1em}{.4pc}

\thispagestyle{empty}

\section{Introduction} \label{sec:Intro}
This article establishes a representation theorem (Theorem \ref{t:main-intro}) for free noncommutative (nc) plurisubharmonic rational functions and an effective criterion (Theorem \ref{t:isplush-intro}) for an nc rational function to be plurisubharmonic.
 Plurisubharmonic functions are multivariate analogs
of subharmonic functions and are central objects
 in several complex variables \cite{DAn93,For17}, in part because of their connection
 to pseudoconvex domains.  Our interest in nc plurisubharmonic 
 rational functions  stems from their connection to free domains that 
 can be transformed, via a proper nc rational mapping, to a convex free domain.
 Free domains and free maps are basic objects studied in free analysis \cite{AM15,BMV,MT16,MS,PT-D17,Po1,Po2,SSS18}, a quantized analog of classical analysis. 

\subsection{Basic notation and terminology}
Let $\lax$ denote the free monoid generated by the $2\gv$ freely noncommuting variables 
$x_1, \ldots, x_\gv, x_1^*, \ldots, x_\gv^*$.  \index{$\lax$} Elements of $\lax$
 are \df{words}.   There is a natural involution ${}^*$ on
 $\lax$ determined by $x_j\mapsto x_j^*$ and, $(uv)^* = v^* u^*$ for words $u,v\in \lax$.
Let $\C\lax$ denote the free algebra of finite $\C$-linear combinations of elements
 of $\lax$.  Elements of $\C\lax$ are \df{(nc) polynomials}.    Thus an nc polynomial $p$ has the form
\begin{equation}
\label{e:genericp}
  p =\sum_{w\in \lax}  p_w \, w,
\end{equation}
where the sum is finite and $p_w\in \C.$
The involution  ${}^*$ extends to an involution on $\C\lax$. For $p$ of the form  \eqref{e:genericp},
\[
 p^* = \sum \overline{p_w} \, w^*.
\]
A  polynomial $p\in \C\lax$ is symmetric if $p^*=p$ and is 
\df{analytic} if it contains only the $x$ 
 variables and none of the $x^*$ variables.  In this latter case we write
 $p(x)\in \C\ax$ instead of $p(x,x^*)\in\C\lax.$

Differentiation of elements of $\C\lax$ is described as follows. 
Let $h_1,\dots,h_\gv,h_1^*,\dots,h_\gv^*$ denote a second $2\gv$-tuple of freely
noncommuting variables. For $p\in \C\lax$,  the \df{ partial of $p$  with respect to $x$} 
 and the \df{partial of $p$ with respect to $x^*$} are, respectively,
\[
\begin{split}
 p_x(x,x^*)[h,h^*] = &  \lim_{t\to 0} \frac{p(x+th,x^*) -p(x,x^*)}{t}, \\
 p_{x^*}(x,x^*)[h,h^*] = & \lim_{t\to 0} \frac{p(x,(x+th)^*)-p(x,x^*)}{t}.
\end{split}
\]
  There are four second order  partial derivatives. Each lies in
 $\C\laxh.$ For instance,
\[
 p_{x,x^*}(x,x^*)[h,h^*] 
  = \lim_{t\to 0} \frac{p_{x}(x,(x+th)^*)[h,h^*]-p_{x}(x,x^*)[h,h^*]}{t}
\]
is the \df{complex Hessian} of $p$.

\begin{example}\rm
 Consider the polynomial $q(x,x^*) = 1 + 2x_1 x_2^*x_1^*x_2.$
 Its derivative with respect to $x$ is,
\[
q_x(x,x^*)[h,h^*] = 2h_1 x_2^* x_1^* x_2 + 2x_1x_2^*x_1^* h_2 \in \C\laxh.
\]
 and its complex Hessian is,
\[
\pushQED{\qed} 
 q_{x,x^*}(x,x^*)[h,h^*] = 2h_1 h_2^* x_1^* x_2 + 2h_1 x_2^* h_1^* x_2
  + 2x_1 h_2^* x_1^* h_2 + 2 x_1 x_2^* h_1^* h_2. \qedhere
  \popQED
\]

\end{example}

\begin{example}\label{ex:sumo}\rm 
 As a general example, given analytic polynomials $f_j(x)$, the complex Hessian of 
\begin{equation*}
 Q(x,x^*) = \sum_j\zeta_j^*(x) \zeta_j(x)
\end{equation*}
is 
\[
\pushQED{\qed} 
 Q_{x,x^*}(x,x^*)[h,h^*] = \sum_j \zeta_{x}^*(x)[h] \zeta_x(x)[h].     
 \qedhere
  \popQED
\]
 \end{example}

Let $\mat{n}(\C)^\gv$ denote the set of $\gv$-tuples $X=(X_1,\dots,X_\gv)$ of $n\times n$
 matrices over $\C$. Let $M(\C)^\gv$ denote the sequence $(\mat{n}(\C)^\gv)_n$.
An element $p$ of $\C\lax$ is naturally evaluated at a tuple $X\in M(\C)^\gv$
by simply replacing $x_j$ by $X_j$ and $x_j^*$ by $X_j^*$.  
The involution on $\C\lax$ and evaluation on $M(\C)^g$ is compatible with matrix adjoint; that is,\looseness=-1
$$
p^*(X,X^*) = p(X,X^*)^*.
$$
Moreover, it is well known  and easy to see 
  that $p$ is symmetric if and only if $p(X,X^*)^*=p(X,X^*)$
 for all $X\in M(\C)^\gv$.  

The derivatives of $p$ involve both $x$ and $h$ variables and are thus
evaluated at pairs $(X,H)\in M(\C)^{2\gv}.$   Moreover, 
the derivatives of $p$ are compatible with differentiation after evaluation. 
For example,
\[
 p_x(X,X^*)[H,H^*] = \lim_{t\to 0} \frac{p(X+tH,X^*)-p(X,X^*)}{t}.
\]

A polynomial $p\in \C\lax$ is \df{(matrix) positive} if $p(X,X^*)\succeq 0$
for all $X\in M(\C)^\gv$. Here $T\succeq 0$ indicates the self-adjoint matrix 
$T$ is positive semidefinite.  For example, 
 for the polynomial $Q$ of Example \ref{ex:sumo},
\[
 Q_{x,x^*}(X,X^*)[H,H^*] = \sum_j (\zeta_x(X)[H])^* \zeta_x(X)[H] \succeq 0.
\]
Thus $Q_{x,x^*}$ is matrix positive.

A polynomial $p\in \C\lax$ is \df{plurisubharmonic}, abbreviated \df{plush},
if its complex Hessian is matrix positive. By the main result of \cite{Greene}
(see also \cite{GHVpreprint}), if $p\in \C\lax$ is plush, 
 then $p$  has the (canonical) form,
\begin{equation}
\label{e:conform}
 p(x,x^*) = \ell(x)+\ell(x)^*  +\sum_{j=1}^N \zeta_j(x)^*\zeta_j(x) + \sum_{k=1}^M  \eta_k(x) \eta_k(x)^*,
\end{equation}
for some affine linear analytic $\ell$  and analytic $\zeta_j,\eta_k\in \C\ax$.

A symmetric polynomial $f\in \C\lax$ is \df{convex} if 
\[
  F\left(\frac{X+Y}{2}\right) -\frac{F(X)+F(Y)}{2}\succeq 0
\]
for all $X,Y,$ where $F(X)=f(X,X^*).$
 Convexity of $f$ is equivalent to its \df{(full) Hessian}, defined as
\[
 f^{\prime\prime}(x,x^*)[h,h^*] := f_{x,x}(x,x^*)[h,h^*]  +2f_{x,x^*}(x,x^*)[h,h^*]
 + f_{x^*,x^*}(x,x^*)[h,h^*],
\]
being matrix positive \cite[Theorem 2.4]{HM04}.
Furthermore, by \cite[Theorem 3.1]{HM04},
$f$ is convex if and only if 
 there exists an affine linear analytic polynomial $\ell\in\C\ax$ and linear polynomials
 $\varphi_j\in \C\lax$ such that 
\[
 f(x,x^*) = \ell(x)+\ell(x)^* + \sum_j \varphi_j(x,x^*)^* \varphi_j(x,x^*). 
\]
Hence, writing $\varphi_j(x,x^*)=w_j(x)+y_j(x)^*$, 
 if $f$ is convex, then there exists an analytic (quadratic) polynomial
$u(x)$, a positive integer $M$,
 and  linear analytic polynomials $w_j$ and $v_j$ such that
\begin{equation*}
 f(x,x^*) = u(x)+u(x)^* + \sum_{j=1}^M w_j(x)^*w_j(x) + \sum_{j=1}^M y_j(x) y_j(x)^*.
\end{equation*}

 There is an intimate connection between convex and plush polynomials. 
 Using variables $z=(u,w_1,\dots, w_N, y_1,\dots,y_M)$ and the formal adjoints
 $z^*=(u^*,w_1^*,\dots,w_N^*,y_1^*,\dots,y_N^*)$, the discussion above shows
\begin{equation}
\label{e:fzzstar}
 f(z,z^*) = u+u^* + \sum_{j=1}^N w_j^* w_j + \sum_{j=1}^M y_j y_j^*
\end{equation}
 is convex. Further, for the polynomial $p$ of equation \eqref{e:conform}
 and $f$ from equation \eqref{e:fzzstar},
\[
 p(x,x^*) = f(q(x),q(x)^*),
\]
where $q$ is the analytic mapping,
\[
q(x) = \begin{bmatrix} \ell(x), \ \zeta_1(x), \,  \ldots, \, \zeta_N(x), \,  
   \eta_1(x), \,  \ldots, \,  \eta_M(x) \end{bmatrix}.
\]
Thus, if $p$ is plush,  then $p$ is the composition of
an analytic polynomial with a convex polynomial. The converse is evidently true.
The main result of this paper establishes the analog of this result for nc rational functions.\looseness=-1

\subsection{Noncommutative rational functions}
A {\bf descriptor realization} \cite{BGM,HMV06,KVV} of an nc rational function 
 $r\in\freefstar$ \cite{BR,Coh} regular at $0$  is an expression of
the form
\begin{equation}
\label{e:ratstate0}
r(x,x^*) = c^*( J -  \Lam_A(x)  -  \Lam_{B}(x^*) )^{-1} b,
\end{equation}
where, for some positive integer $d$, the $d\times d$ matrix  $J$ is invertible, $b,c\in \C^d$, $A,B\in M_d(\C)^\gv$ and
\[
 \Lambda_A(x) =\sum_{j=1}^\gv  A_j x_j.
\]
The $d\times d$ matrix-valued polynomial $\Lambda_A(x)\in M_d(\C\ax)$ is evaluated at a $\gv$-tuple
 $X\in M(\C)^\gv$ via the tensor product. Thus if $X\in \mat{n}(\C)^\gv$, then
\[
 \Lambda_A(X) = \sum_{j=1}^\gv A_j\otimes X_j \in M_d(\C)\otimes \mat{n}(\C).
\]
The descriptor realization of equation \eqref{e:ratstate0} is naturally evaluated 
at any tuple $X\in \mat{n}(\C)^\gv$ for which $J-\Lambda_A(X) -\Lambda_B(X^*)$ is invertible as
\[
 r(X,X^*) = (c^*\otimes I_n) \, \left (J\otimes I_n -\Lambda_A(X) -\Lambda_B(X^*)\right)^{-1} \,(b\otimes I_n).
\]
 In particular, $0\in \C^\gv$ is in the domain of $r,$ a property we glorify by
 saying $r$ is \df{regular at $0$}.\looseness=-1

If $r$ from \eqref{e:ratstate0} is symmetric in that 
$r=r^*$, then it admits a \df{symmetric descriptor realization}
\begin{equation}
\label{e:ratstate}
r(x,x^*) = c^*( \JK -  \Lam_\AB(x)  -  \Lam_{\AB^*}(x^*) )^{-1} c,
\end{equation}
where the $d\times d$ matrix  $\JK$ is a \df{signature matrix}
($\JK^2=I_d,\ \JK^*=\JK$).
If $r$ from \eqref{e:ratstate0} is \df{analytic}, i.e.,
has no $x^*$ variables, then we may take $B=0$ in which case
\begin{equation*}
 %\label{e:ratstateA}
r(x) = c^*( J-  \Lam_{A}(x)   )^{-1} b.
\end{equation*}

 For the purposes of this article, nc rational functions that 
 are regular at $0$ can be identified with
any one of their descriptor realizations as explained in further detail
 in Subsection \ref{sec:realizations}.\looseness=-1

The definitions of derivatives for polynomials naturally extend to symmetric and 
analytic rational functions. Formulas for the derivative, Hessian and 
 complex Hessian of a symmetric descriptor realization are given in Subsection \ref{sec:realizations}.
 In particular, a (symmetric) rational function $r$
 is defined to be plush in a neighborhood of $0$ if its complex Hessian is matrix positive
 in a neighborhood of $0.$ Likewise the notion of convexity for nc polynomials
 extends to nc rational functions.

\subsection{Main results}
We now state the main results of this article.

\begin{theorem}
 \label{t:main-intro}
   A symmetric nc  rational function $r$ 
in $\gv$ variables  that is
   regular at $0$ 
   is plush in a neighborhood of $0$ if and only if
    there exists a positive integer $\hv$, a  convex nc rational function $f$ 
   in $\hv$ variables  and an analytic nc rational mapping  
  $q:M(\C)^\gv  \dashrightarrow M(\C)^\hv$ 
  such that $r=f\circ q$.
\end{theorem}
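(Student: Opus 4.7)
The plan is to treat the two implications separately, modeling the argument on the polynomial case recalled in equations \eqref{e:conform}--\eqref{e:fzzstar}.

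For the $(\Leftarrow)$ direction, I would argue by a chain-rule computation on the complex Hessian. Because $q$ is analytic, $q$ depends only on $x$ and $q^*$ only on $x^*$, so every cross term involving $q_{x^*}$ or $(q^*)_x$ vanishes, yielding
\[
r_{x,x^*}(x,x^*)[h,h^*] = f_{z,z^*}\bigl(q(x), q(x)^*\bigr)\bigl[q_x(x)[h],\, q_x(x)[h]^*\bigr].
\]
A convex nc rational function is in particular plush: its representation as an affine piece plus terms of the form $w_j^* w_j$ and $y_j y_j^*$ with analytic $w_j, y_j$ (the rational analog of \eqref{e:fzzstar}, extending \cite[Theorem 3.1]{HM04}) makes its complex Hessian a manifest sum of Gram-type terms $\sum_j w_j'[h]^* w_j'[h]$, the $u+u^*$ and $y_j y_j^*$ pieces contributing nothing to $f_{z,z^*}$. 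Substituting into the displayed identity shows $r_{x,x^*}$ takes positive semidefinite values on matrix tuples near $0$.

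The $(\Rightarrow)$ direction is the constructive heart of the theorem. The plan is to start from a minimal symmetric descriptor realization $r(x,x^*) = c^*(K - \Lambda_B(x) - \Lambda_{B^*}(x^*))^{-1} c$ of $r$ and exploit the companion realization-theoretic characterization of plushness (Theorem \ref{t:isplush-intro}) to extract a rational analog of the Greene canonical form \eqref{e:conform}. Concretely, I expect the plush criterion to force a block structure on the pencil $\Lambda_B(x) + \Lambda_{B^*}(x^*)$ from which one can read off analytic nc rational functions $\ell,\zeta_1,\dots,\zeta_N,\eta_1,\dots,\eta_M$ and a representation
\[
r(x,x^*) = \ell(x) + \ell(x)^* + \sum_{j=1}^{N} \zeta_j(x)^* \zeta_j(x) + \sum_{k=1}^{M} \eta_k(x) \eta_k(x)^*.
\]
Once this rational Greene form is in hand, the polynomial argument following \eqref{e:conform} transcribes verbatim: take $q = (\ell, \zeta_1, \ldots, \zeta_N, \eta_1, \ldots, \eta_M)$ and let $f(z,z^*)$ be the convex polynomial of \eqref{e:fzzstar}, so that $r = f \circ q$ by direct substitution.

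The main obstacle will be establishing this rational Greene decomposition. In the polynomial case one can combine matrix-size ampliations with degree bookkeeping to peel off the analytic pieces, but neither tool is directly available in the rational setting. My approach is therefore to work entirely inside the minimal realization: minimality pins down the data up to similarity and should guarantee that any structural decomposition extracted from the pencil $\Lambda_B+\Lambda_{B^*}$ corresponds to bona fide nc rational components rather than merely formal ones. The delicate points will be (i) showing that the blocks produced by the plush criterion are themselves minimal realizations of honest analytic nc rational functions $\zeta_j$ and $\eta_k$, (ii) reassembling them so as to reproduce $r$ exactly rather than a germ-level relative, and (iii) verifying that the function $f$ built at the end really is convex, which should follow from \cite[Theorem 3.1]{HM04} once the syntactic shape \eqref{e:fzzstar} is in place.
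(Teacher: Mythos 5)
Your $(\Leftarrow)$ direction is sound and tracks the paper's argument (the chain rule forces the complex Hessian of $f\circ q$ to be a compression of a positive operator, which is what Corollaries \ref{cor:compose-}, \ref{cor:convex-plush} and \ref{cor:compose} establish). Your $(\Rightarrow)$ direction, however, rests on an existence claim that is false: the ``rational Greene form''
\[
r(x,x^*) = \ell(x) + \ell(x)^* + \sum_{j=1}^{N}\zeta_j(x)^*\zeta_j(x) + \sum_{k=1}^{M}\eta_k(x)\eta_k(x)^*
\]
with finitely many analytic rational $\ell,\zeta_j,\eta_k$ and $f$ the fixed quadratic of \eqref{e:fzzstar} does not exist for a general plush rational $r$. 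Any expression of this type expands, as a formal power series, into a sum of words that are analytic, anti-analytic, of the form (anti-analytic)(analytic), or of the form (analytic)(anti-analytic). It therefore has zero coefficient on every word with three or more sign changes, such as $x x^* x$. But take $K=1$, $B=1/2$, $c=1$, so $r(x,x^*) = (1 - x - x^*)^{-1} - 1 = \sum_{n\geq1}(x+x^*)^n$, which is convex and hence plush by Theorem~\ref{t:intro-cvx}; the coefficient of $x x^* x$ in $r$ is $1\neq 0$. So no decomposition of the type you want exists, even for convex rational $r$. This is exactly why, in the convex rational case, one gets butterfly representations $\ell + \ell^* + L(x,x^*)^*\,\Gamma(x,x^*)\,L(x,x^*)$ (cf.\ \cite{HMV06}) with a \emph{resolvent} $\Gamma$ sitting between the linear pieces, rather than bare sums of hermitian squares.

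The paper's actual route is necessarily different. From a minimal realization it builds a \emph{formal} convex $\ttf$ in \emph{infinitely many} intermediate variables $y_w$ indexed by nonempty analytic words, using angular operators $\rho,\rho_*$ of maximal $K$-nonnegative subspaces containing $\rng B$ and $\rng B^*$ (that the latter are $K$-nonnegative is where Theorem~\ref{t:isplush-intro} enters, as you anticipated). The formal identity $r = \ttf\circ\ttq$ (Theorem~\ref{t:formal}) is verified by a moment-matching computation over strictly alternating words. Finiteness is then recovered not by a Greene-type decomposition but by observing that only the finite-dimensional algebra generated by $\{KB_1,\dots,KB_\gv\}$ matters: picking a basis $\{C_1,\dots,C_\hv\}$ of that algebra collapses the infinitely many $y_w$ to $\hv$ variables, producing a convex \emph{rational} $f$ (not a quadratic polynomial) via a new descriptor realization \eqref{eq:f}, and an analytic rational \emph{convexotonic} map $q$ via \eqref{eq:q}. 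The crucial point your plan misses is that $f$ cannot be taken to be the universal quadratic \eqref{e:fzzstar}; the convex $f$ in the rational setting genuinely has higher-order structure encoded in its own pencil, and $q$ is correspondingly more elaborate than a tuple of analytic pieces read off the original realization.
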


 The realization of equation \eqref{e:ratstate} is {\bf minimal} if \index{minimal realization}
\[
 \spann \{w(\AB_1\JK,\dots,\AB_\gv\JK,\AB_1^*\JK,\dots,\AB_\gv^*\JK)c : w\in\Langle x,\tilde{x}\Rangle\} = \C^d,
\]
 where $\langle x,\tilde{x}\rangle$ is the free monoid on the $2\gv$ freely noncommuting \index{$\tilde{x}$}
 variables $(x_1,\dots,x_\gv,\tilde{x}_1,\dots,\tilde{x_\gv}).$
An nc rational function regular at $0$ admits a minimal 
realization, which is readily computable and unique up to {\it  similarity} and
 in the symmetric case unique up to {\it unitary} similarity; 
see \cite[Section 4]{HMV06} or \cite[Section 6]{Vol18}, Remark \ref{r:apology} and Subsection \ref{sec:realizations}.

 Given a tuple $E\in M_d(\C)^\gv$, let 
  $\ran E$ denote the span of the \index{$\ran E$}
 ranges of the $E_j.$ 
  We can now state our second main result.

\begin{theorem}
 \label{t:isplush-intro}
   Assuming the realization of equation \eqref{e:ratstate} is minimal,
  $r$ is plush in a neighborhood of $0$ if and only if $P\JK P$ and $P_*\JK P_*$
  are both positive semidefinite, where $P$ and $P_*$ are the projections
 onto $\ran \AB$ and $\ran \AB^*$ respectively.
\end{theorem}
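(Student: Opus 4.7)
The plan is to first compute the complex Hessian of $r$ in terms of the realization data, and then analyze the resulting expression. Writing $L(x,x^*) = \JK - \Lam_\AB(x) - \Lam_{\AB^*}(x^*)$ so $r = c^* L^{-1} c$, two applications of the product rule yield
\[
r_{x,x^*}[h,h^*] = c^* L^{-1} \Lam_{\AB^*}(h^*) L^{-1} \Lam_\AB(h) L^{-1} c + c^* L^{-1} \Lam_\AB(h) L^{-1} \Lam_{\AB^*}(h^*) L^{-1} c.
\]
Since $\JK$ is a signature matrix and $(\Lam_\AB(h))^* = \Lam_{\AB^*}(h^*)$, one has $L^* = L$. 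Combining this with the observation that $\Lam_\AB(h)$ takes values in $\ran \AB$ and $\Lam_{\AB^*}(h^*)$ takes values in $\ran \AB^*$, the Hessian rewrites symmetrically as
\[
r_{x,x^*}[h,h^*] \;=\; v^* (P L^{-1} P)\, v \;+\; w^* (P_* L^{-1} P_*)\, w,
\]
where $v = \Lam_\AB(h) L^{-1} c$ and $w = \Lam_{\AB^*}(h^*) L^{-1} c$. Specializing to $x=0$ (so $L^{-1} = \JK$) replaces $PL^{-1}P$ and $P_*L^{-1}P_*$ by $P\JK P$ and $P_* \JK P_*$.

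\textbf{Sufficiency.} Assume $P\JK P \succeq 0$ and $P_*\JK P_* \succeq 0$. Then the displayed formula makes the Hessian at $x=0$ manifestly positive semidefinite, as a sum of two PSD sandwiches. To upgrade to a neighborhood of $0$, the plan is to construct a convex nc rational function $f$ and an analytic nc rational mapping $q$ with $r = f\circ q$ and invoke Theorem \ref{t:main-intro}. Factoring $P\JK P = S^* S$ and $P_*\JK P_* = T^* T$ and using the minimality of the realization, one can introduce analytic coordinates $q(x)$ whose entries record the action of $\Lam_\AB(x)$ and $\Lam_{\AB^*}(x^*)$ relative to suitable bases of $\ran \AB$ and $\ran \AB^*$; the positivity hypotheses then ensure that the outer function $f$ expressing $r$ in these new variables is convex.

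\textbf{Necessity.} Assume $r$ is plush in a neighborhood of $0$. By Theorem \ref{t:main-intro}, $r = f \circ q$ with $f$ convex and $q$ analytic. Using the realization-theoretic structure of convex nc rational functions together with the analytic realization of $q$, one assembles a symmetric descriptor realization of $r$ whose signature matrix exhibits a block structure that forces the analogs of $P\JK P \succeq 0$ and $P_*\JK P_* \succeq 0$. Because any two minimal symmetric realizations of $r$ are unitarily equivalent, passing from this composition realization down to the given minimal one via a Kalman-type reachability/coreachability reduction transfers the positivity conditions to the stated data $(\JK, \AB, c)$.

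The principal obstacle is controlling what happens under the minimization step and, symmetrically, the global-to-local upgrade on the sufficiency side when $P\JK P$ or $P_*\JK P_*$ is PSD but not strictly positive on the corresponding subspace. One must verify that the quotient by the non-reachable/coreachable part of the composition realization respects the splitting between $\ran \AB$ and $\ran \AB^*$, so that the positivity blocks survive intact; and conversely that any kernel of $P\JK P$ inside $\ran \AB$ can, by minimality, be absorbed into a redundant direction and collapsed without altering $r$, yielding strict positivity on the reduced subspace. This tight interplay between signature-matrix positivity and the linear-algebraic minimization of symmetric realizations is the technical heart of the proof.
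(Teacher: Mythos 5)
Your opening Hessian decomposition
\[
r_{x,x^*}[h,h^*] = v^* (P L^{-1} P) v + w^* (P_* L^{-1} P_*) w, \quad v = \Lam_\AB(h)L^{-1}c,\ w = \Lam_{\AB^*}(h^*)L^{-1}c,
\]
is correct and is exactly the observation behind the paper's Corollary~\ref{cor:plushimplies}. After that, however, the argument has a serious structural flaw.

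\textbf{Circularity in necessity.} You propose to deduce necessity from Theorem~\ref{t:main-intro} (``plush implies $r=f\circ q$''). But in the paper this implication is the content of Theorem~\ref{thm:main}, whose very first step is to invoke Theorem~\ref{t:local} (the restatement of Theorem~\ref{t:isplush-intro}) to conclude that $\rng B$ and $\rng B^*$ are $\JK$-nonnegative, since that is what powers the angular-operator construction in Section~\ref{sec:main-result}. So using Theorem~\ref{t:main-intro} to prove Theorem~\ref{t:isplush-intro} runs in a circle. Moreover, even bracketing the circularity, the ``Kalman-type reduction transfers the positivity conditions'' step is precisely the hard point, not a routine remark: passing from a (non-minimal) composition realization to a minimal one is a compression, not a unitary equivalence, and there is no a priori reason the $\JK$-nonnegativity of $\rng \AB$ survives such a compression. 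The paper avoids this entirely.

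\textbf{What the paper actually does for necessity.} The missing idea is Lemma~\ref{l:indep}: in a minimal realization there exist $X$ in any free ball and a vector $v$ so that $z=\Delta(X)(c\otimes v)\in\C^d\otimes\C^n$ has $d$ linearly independent components. Combined with Lemma~\ref{l:span}, this says $\{\Lambda_\AB(H)z : H\}$ fills all of $\rng\AB\otimes\C^n$. The other ingredient is the direct-sum decoupling of Proposition~\ref{p:realizationplushimplies}: plushness on a direct-sum-closed set forces $r_\downarrow(X,\tX)[H]\succeq0$ separately, and you may take $\tX=0$. With $\tX=0$ the middle resolvent becomes $\JK\otimes I_n$, giving $z^*\Lambda_\AB(H)^*(\JK\otimes I_n)\Lambda_\AB(H)z\ge0$ for all $H$, and the density of $\Lambda_\AB(H)z$ then yields $P\JK P\succeq0$. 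Your formula alone, specialized at $X=0$, only tests $\JK$ on $\{\Lambda_\AB(H)\JK c: H\}$, which need not fill $\rng\AB$; varying $X$ as in Lemma~\ref{l:indep} is the crucial step you are missing.

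\textbf{Sufficiency.} Your plan (build $f$ and $q$ from the Krein-space positivity and invoke that convex-composed-with-analytic is plush) is not circular, and the paper itself remarks that Proposition~\ref{prop:right} can be deduced from the construction in Section~\ref{sec:main-result}. But you leave the construction at the level of a slogan. The paper in fact gives a much shorter, self-contained proof of Proposition~\ref{prop:right} by pure Krein-space perturbation: decompose $\rng\AB$ into its $\JK$-neutral part $\cR^0$ and a complementary $\JK$-strictly-positive part $\cR^+$, expand $\Delta(X)=(\JK\otimes I_n)(I-\Phi(X)(\JK\otimes I_n))^{-1}$ as a Neumann series for small $\|\Phi(X)\|$, observe that the image of $\cR^0\otimes\C^n$ under $I-\Phi(X)\JK$ lies in the kernel of $(Q^*\otimes I)\Delta(X)(Q\otimes I)$, and that on $\cR^+\otimes\C^n$ the compression of $\Delta(X)$ stays $\succeq\eta/2$. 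You should either carry out the $f\circ q$ construction in full or adopt this direct argument.

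In short: the decomposition of the Hessian matches the paper, the sufficiency sketch is repairable, but the necessity argument is circular and substitutes an unjustified Kalman reduction for the paper's key mechanism (Lemma~\ref{l:indep} together with the direct-sum splitting that lets you set $\tX=0$).
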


\begin{rem}\rm
  Since minimal realizations
 for  nc rational functions are 
 efficiently computable, Theorem \ref{t:isplush-intro}
 implies that so is determining whether
  an  nc rational function is plush. \qed
\end{rem}

There is one further result that merits inclusion in this introduction.
In \cite{HMV06} and  \cite{PT-Droyal} (see also \cite{PT-D17})
nc rational functions that are convex in a neighborhood of $0$
are characterized in terms of {\it butterfly representations}. 
 Below is an alternate characterization in the spirit of Theorem 
 \ref{t:isplush-intro}.

\begin{theorem}
 \label{t:intro-cvx}
 Assuming the realization of equation \eqref{e:ratstate} is minimal,
  $r$ is convex in a neighborhood of $0$ if and only if $Q\JK Q$ is 
 positive semidefinite, 
  where $Q$ is the projection onto $\ran \AB +\ran \AB^*.$
\end{theorem}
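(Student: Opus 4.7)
The plan is to parallel the strategy of Theorem~\ref{t:isplush-intro}, substituting the full Hessian for the complex Hessian. A direct application of the product rule to $r=c^* L^{-1}c$ with $L=L(x,x^*)=\JK-\Lam_\AB(x)-\Lam_\AB(x)^*$ yields
\[
r''(x,x^*)[h,h^*] \;=\; 2\,c^* L^{-1}\,M\,L^{-1}\,M\,L^{-1}\,c,
\]
where $M=M(h,h^*)=\Lam_\AB(h)+\Lam_\AB(h)^*$; the four second partials collapse into a single expression because they match the expansion of $(\Lam_\AB(h)+\Lam_\AB(h)^*)^2$. Since $\JK$ is a signature matrix, $\JK^{-1}=\JK$, and at $x=0$ the Hessian reduces to $2(M\JK c)^* \JK (M\JK c)$.

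For the ($\Leftarrow$) direction I would mirror the constructive part of Theorem~\ref{t:isplush-intro}. Given $Q\JK Q\succeq 0$, the restriction $\JK|_V$ is positive semidefinite, so it admits a factorization, and the off-$V$ blocks of $\JK$ interact with $c$ only through an affine-linear piece (here one uses the consequence $V+\C c=\C^d$ of minimality). Because the ranges of both $\Lam_\AB(x)$ and $\Lam_\AB(x)^*$ lie in $V$, the realization can be repackaged into a Helton--McCullough style convex representation $r=u(x)+u(x)^*+\sum_j w_j(x)^* w_j(x)+\sum_k y_k(x)y_k(x)^*$ with $u,w_j,y_k$ analytic rational functions (in the spirit of \cite{HMV06,PT-Droyal}), from which convexity is immediate.

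For the ($\Rightarrow$) direction, which is the main obstacle, suppose $r$ is convex in a neighborhood of $0$, so $(Mu)^* L^{-1}(Mu)\succeq 0$ for all $(X,H)$ in that neighborhood, where $u=L^{-1}c$. Since $\ran M\subseteq V\otimes \C^n$, this asserts positivity of $L^{-1}(X,X^*)$ along the vectors $Mu(X,H)$. To extract the global condition $\JK|_V\succeq 0$, one must show that as $(X,H,n)$ vary, the vectors $Mu(X,H)\cdot v$ (for $v\in\C^n$) collectively cover $V$ in a complex-linear sense. The mechanism is a Taylor expansion of $u=L^{-1}c$ in powers of $X$: the resulting coefficient vectors are of the form $\JK\cdot w(\AB\JK,\AB^*\JK)c$, which by minimality span $\C^d$. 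Applying $M$ and varying $H$ over matrix tuples of large enough size (using block-structured $H_j$ to decouple conjugate-paired entries) produces vectors filling $V\otimes\C^n$; sending $X\to 0$ and invoking the matrix positivity of the Hessian then forces $\JK|_V\succeq 0$, i.e.\ $Q\JK Q\succeq 0$.

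The main technical obstacle is this exhaustion step: the adjoint constraint $H_j^*=(H_j)^*$ means that for fixed $X$ the range of $Mu(X,\cdot)$ is only a real-linear subset of $V\otimes\C^n$, so one must combine matrix-valued $H$ of sufficient dimension with the Taylor expansion in $X$ to pin down complex-linear positivity on all of $V$. The contrast with Theorem~\ref{t:isplush-intro} is illuminating: the complex Hessian splits as $\Lam_\AB(h)^* L^{-1}\Lam_\AB(h)+\Lam_\AB(h)L^{-1}\Lam_\AB(h)^*$, yielding separate positivity conditions on $\ran\AB$ and $\ran\AB^*$; whereas the full Hessian keeps them coupled in the single factor $M=\Lam_\AB(h)+\Lam_\AB(h)^*$, producing the joint condition on $\ran\AB+\ran\AB^*=V$.
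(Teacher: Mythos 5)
Your Hessian formula $r''=2c^*L^{-1}ML^{-1}ML^{-1}c$ with $M=\Lam_\AB(h)+\Lam_\AB(h)^*$ matches the paper's \eqref{eq:trd}, and you correctly name the central obstacle in the necessity direction: for fixed $X$ the constraint that $H_j^*$ is the adjoint of $H_j$ makes $\{\Phi(H)\Gamma(X)(c\otimes u)\colon H\}$ only a real-linear family, so the pointwise positivity of the Hessian does not immediately yield complex-linear positivity of $\JK$ on $V=\rng\AB+\rng\AB^*$. But your proposed fix does not, as sketched, resolve it. The convexity hypothesis says $(Mu)^*L^{-1}(X,X^*)(Mu)\succeq0$ with $u=L^{-1}(X,X^*)c$, and the same $X$ sits in both the outer factors (whose Taylor coefficients you want to span $\C^d$) and the middle resolvent (which you want to become $\JK$). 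Sending $X\to 0$ kills the span, since $u(X)\to \JK c$ is a single vector; keeping $X$ away from $0$ to preserve the span means the middle $L^{-1}$ is not $\JK$. Your ``block-structured $H_j$'' gesture cannot untangle this, because it is the $X$-coupling, not only the $H$-coupling, that must be broken.

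What the paper does -- and what is missing from your sketch -- is one clean doubling step carried over from Proposition~\ref{p:realizationplushimplies}: set
\[
\YY=\begin{bmatrix} X&0\\ 0&\wtX\end{bmatrix},\qquad
\KK=\begin{bmatrix} 0&H\\ \wtH&0\end{bmatrix},
\]
with four \emph{independent} tuples $X,\wtX,H,\wtH$. Then
\[
\Phi(\KK)=\begin{bmatrix} 0&\Psi(H,\wtH)\\ \Psi(H,\wtH)^*&0\end{bmatrix},\qquad
\Psi(H,\wtH)=\Lam_\AB(H)+\Lam_\AB(\wtH)^*,
\]
and $r''(\YY,\YY^*)[\KK,\KK^*]$ block-diagonalizes, giving for small $X,\wtX$ and all $H,\wtH$ the inequality
\[
(c\otimes I_n)^*\,\Gamma(X)\,\Psi(H,\wtH)\,\Gamma(\wtX)\,\Psi(H,\wtH)^*\,\Gamma(X)\,(c\otimes I_n)\succeq 0.
\]
This supplies two independent $X$-slots: keep $X\ne0$ in the outer slots so that Lemmas~\ref{l:indep} and \ref{l:span} produce, from minimality, a $z=\Gamma(X)(c\otimes u)$ whose components span $\C^n$; and set $\wtX=0$ in the middle slot so that $\Gamma(\wtX)=\JK\otimes I_n$ exactly. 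Since $H$ and $\wtH$ are now genuinely free, $\{\Psi(H,\wtH)z\}$ fills all of $(\rng\AB+\rng\AB^*)\otimes\C^n$, and the displayed inequality forces $Q\JK Q\succeq0$. Without this doubling you only have one $X$ and a conjugate-linked $H$, which is exactly the obstacle you flagged.

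On sufficiency you take a genuinely different route: you want to derive a Helton--McCullough style representation $r=u+u^*+\sum w_j^*w_j+\sum y_ky_k^*$ from $Q\JK Q\succeq0$ and read convexity from it. The paper instead proves directly, via a resolvent estimate (Lemma~\ref{l:precvx}), that $(Q\otimes I_n)\Gamma(X)(Q\otimes I_n)\succeq0$ on a free ball, which makes $r''$ manifestly positive since $\rng\Phi(H)\subseteq V\otimes\C^n$. Your route is plausible in principle (it would in effect reconstruct a butterfly realization as in \cite{HMV06,PT-Droyal}), but it is not carried out and would require more work to justify than the paper's short estimate.
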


\subsection{Background and motivation}
 Given  $\varphi$, a  perhaps  matrix-valued symmetric nc 
  rational function, let $\mathfrak{P}_{\varphi}(n) =\{X\in \mat{n}(\C)^\gv: \varphi(X)\succ 0\}.$
 Let $\mathfrak{P}_\varphi$ denote the sequence $\left (\mathfrak{P}_\varphi(n)\right)_n.$
 In the case $\varphi$ is a polynomial,  $\mathfrak{P}_\varphi$ 
 is the free  analog of a basic semialgebraic set. In several complex
 variables, Levi pseudoconvex sets are described in terms of
 plurisubharmonic functions. Pushing this  analogy, if $\varphi$ is
 plush, we say $\mathfrak{P}_\varphi$ is a \df{free  pseudoconvex set}.
 Free pseudoconvex sets are natural for the free analog of several complex variables,
 particularly as domains for uniform polynomial approximation \cite{AM14,AM15} (see also \cite{BMV,AHKM18}).
 However, our primary motivation
 for studying nc plush functions and free  pseudoconvex sets arises
 in another way.\looseness=-1

 Given a tuple $\AB\in M_r(\C)^\gv$ and $X\in \mat{n}(\C)^\gv,$ let
\[
 L_\AB(X) = I_r\otimes I_n -\sum \AB_j\otimes X_j - \sum \AB_j^* \otimes X_j^*
\]
 and let
\[
 \mathfrak{P}_{\AB}(n) =\{X\in \mat{n}(\C)^\gv: L_\AB(X) \succ 0\}.
\]
 It is evident that each $\mathfrak{P}_\AB(n)$ is a convex subset of $\mat{n}(\C)^\gv$.
 The set $\mathfrak{P}_\AB(1)\subset \C^\gv$ is a \df{spectrahedron}.
 Thus spectrahedra form  a class of  convex subsets more general than polytopes, but 
 yet with a type of finitary representation.
Spectrahedra appear in several branches of mathematics,
such as convex optimization and real algebraic geometry \cite{BPR13}.
They also play a key role
in the solution of the Kadison-Singer paving conjecture \cite{MSS15}, and
the solution of the Lax conjecture \cite{HV07}.
It is natural
 to call the sequence $\mathfrak{P}_\AB = (\mathfrak{P}_\AB(n))_n$ a
 \df{free  spectrahedron}.   
Free spectrahedra arise naturally in applications such as systems engineering \cite{dOHMP09} 
and control theory \cite{HKMS}. They are also
intimately connected to the theories of matrix convex
sets, operator algebras and operator systems and completely positive maps \cite{EW,HKMjems,Pau,PSS}.

  By the main result of \cite{HMjoca} and also
 \cite{HMannals}, each $\mathfrak{P}_\varphi(n)$ is convex if and only if 
 $\mathfrak{P}_\varphi$ is a free  spectrahedron; that is,
  there exists a  $d$ and tuple $\AB\in M_d(\C)^\gv$ such that
  $\mathfrak{P}_\varphi = \mathfrak{P}_\AB$. In particular,  a basic free 
 semialgebraic set is convex if and only if it is a free  spectrahedron.

 Motivated by systems engineering considerations \cite{Skelton}, a problem is to determine,
 given a free  semialgebraic set $\mathfrak{P}_\varphi$ that is not 
 necessarily convex, if there is a free  spectrahedron
 $\mathfrak{P}_\AB$ and an analytic nc  rational mapping 
  $q:\mathfrak{P}_\varphi \to \mathfrak{P}_\AB$
 that is proper, or better still bianalytic.  Informally, the problem is
 to achieve convexity via change of variables.   Note that, in any case,
 the matrix-valued rational function $\psi = L_\AB\circ q$ is plush and 
 if $q$  is bianalytic, then $\mathfrak{P}_\varphi = \mathfrak{P}_{\psi}.$
 On the other hand, if $\varphi$ is plush, then by Theorem \ref{t:main-intro}  there exists
 a convex function $f$ in $\hv$ variables  and an analytic rational 
 mapping $q: M(\C)^\gv\dashrightarrow  M(\C)^\hv$  such that
 $\varphi = f\circ q.$ Now the set $\mathfrak{P}_f\subset M(\C)^h$ is convex
 and hence, by \cite{HMjoca}, there exists $A \in M_d(\C)^\gv$ 
 such that $\mathfrak{P}_f = \mathfrak{P}_A.$ Further, 
  $q:\mathfrak{P}_\varphi  \to \mathfrak{P}_A$ is proper. 
  Summarizing,  there is a proper analytic rational change of variables from $\mathfrak{P}_\varphi$
 to a convex set if and only if there is a plush  rational
 function $\psi$ such that   $\mathfrak{P}_\varphi =\mathfrak{P}_\psi.$

 Of course, in the case there exist distinct
  bianalytic rational mappings $q:\mathfrak{P}_\varphi \to\mathfrak{P}_B,$ 
 and $s:\mathfrak{P}_\varphi\to \mathfrak{P}_E$, 
 then there is a non-trivial bianalytic
 rational mapping $t:\mathfrak{P}_\AB \to \mathfrak{P}_{E}$. The articles
 \cite{AHKM18,HKMV} classify, up to some mild hypotheses, the
 triples $(\mathfrak{P}_\AB, \mathfrak{P}_E,t)$ where $t:\mathfrak{P}_\AB\to \mathfrak{P}_E$
 is an nc rational bianalytic mapping. 
Automorphisms of free domains such as balls
 have been considered by a number of authors including 
 \cite{MT16,MS,Po2,SSS18}. 

\subsection{Readers' guide} 
 Beyond this introduction, the  paper is organized as follows.
 Formulas for various derivatives of a symmetric descriptor realization, 
 a discussion of minimal realizations, a  canonical decomposition of
 the complex Hessian and a  preliminary version of Theorem
 \ref{t:isplush-intro} are collected in the next section,  Section \ref{sec:derivative}.  
 Theorem \ref{t:isplush-intro} is proved in
 Section \ref{sec:necessary}.  Theorem \ref{t:intro-cvx} 
 is proved in Section \ref{sec:compose} and 
 the half of Theorem \ref{t:main-intro} 
 that says the composition
 of a convex rational function and an analytic rational function
 is plush is obtained as a corollary. The proof 
 of Theorem \ref{t:main-intro} is completed in Section \ref{sec:main-result}.
 We conclude this introduction with the following remark.\looseness=-1

\begin{rem}\label{r:apology}
Throughout the text we will refer to several existing realization
 theoretic structural theorems, 
for example on convex polynomials,  rational functions, etc., 
that are scattered across the literature. However, in this paper we consider 
functions in variables $x$ and $x^*$, while in the existing literature, most 
statements involve symmetric or hermitian variables, or variables $x$ and $x^T$ 
evaluated on real matrices. The reason these results can be applied 
 in the present setting  has two justifications. 
 Firstly, for each of the required statements, the version 
for symmetric variables (and symmetric matrix functions) and the version 
for hermitian variables (and hermitian matrix functions) have essentially the
same proofs; in some cases, e.g. \cite{Vol18}, this was outlined explicitly. 
Secondly, to each function $f$ in $\gv$ variables $x_1,\dots,x_\gv$ and their 
adjoints $x_1^*,\dots,x_\gv^*$ one can associate a function $s$ in $2\gv$ 
hermitian variables $y_1,\dots,y_{2\gv}$ via
\[
 \begin{aligned}
s(y_1,\dots,y_{2\gv})
=f(y_1+iy_{\gv+1},\dots,y_\gv+iy_{2\gv},y_1-iy_{\gv+1},\dots,y_\gv-iy_{2\gv}),\\
f(x_1,\dots,x_\gv,x_1^*,\dots,x_\gv^*)
=s\left(
\frac{x_1+x_1^*}{2},\dots,\frac{x_\gv+x_\gv^*}{2},\frac{x_1-x_1^*}{2i},\dots,\frac{x_\gv-x_\gv^*}{2i}
\right).
\end{aligned}
\]

These transforms then enable us to freely move between the $(x,x^*)$-setting 
and the hermitian setting from the preceding papers.
\qed
\end{rem}

\section{Plush preliminaries}
 \label{sec:derivative}
 Let $r$ denote a symmetric descriptor realization as in equation \eqref{e:ratstate}.
 As preliminary results and background, this section contains  formulas for the derivative, complex Hessian  and (full) 
 Hessian of $r;$   a precisely stated preliminary version of Theorem \ref{t:main-intro}; and 
 a discussion of minimal descriptor realizations.

\subsection{Derivatives and the Hessians}
 Given $r$ as in equation \eqref{e:ratstate}, let 
\begin{equation}
\label{e:Delta}
\Del(x)=( \JK -  \Lam_\AB(x)  -  \Lam_{\AB^*}(x^*) )^{-1},
\end{equation}
 and given $X\in \mat{n}(\C)^\gv$ and assuming the inverse exists,
\begin{equation}
\label{e:DeltaX}
 \Del(X)=(\JK\otimes I_n-\Lam_\AB(X)-\Lam_{\AB^*}(X^*))^{-1}.
\end{equation}
 Thus $r(x)= c^* \Del(x) c$ and $r(X,X^*) = (c\otimes I_n)^* \Del(X) (c\otimes I_n).$
Straightforward direct calculation shows that the derivative 
 $r_x$ with respect to $x,$ the complex Hessian $r_{x,x^*}$ and 
 the full Hessian  $r^{\prime\prime}$  of $r$ are given by
 \begin{equation}
\begin{aligned}
%\label{eq:fst}
r_x(x,x^*)[h,h^*]  & = c^* \
\Del(x) \  \Lam_\AB(h)  \ \Del(x) \
c \\
% \end{equation*}
% \begin{equation}
\label{eq:snd}
% \begin{split}
 r_{x,x^*}(x,x^*)[h,h^*]  & = 
c^* \ \Del(x)\  \Lam_{\AB}(h)^*  \ \Del(x)\  \Lam_\AB(h)  \ \Del(x)\ c
\\ & \phantom{=\,} + c^* \
\Del(x) \  \Lam_\AB(h)  \ \Del(x)
\  \Lam_{\AB}(h)^*  \ \Del(x) \
c,
%\end{split}
\end{aligned}
\end{equation}
and 
\begin{equation}
\label{eq:trd}
\begin{split}
r^{\prime\prime}(x,&x^*)[h,h^*]  = r_{x,x}[h,h^*]  +2r_{x,x^*}[h,h^*]
+ r_{x^*,x^*}[h,h^*] \\
&= 2 \Big[c^* \Del(x)  \Lam_{\AB}(h) \Del(x) \Lam_\AB(h) \Del(x) c 
+ c^* \Del(x)  \Lam_{\AB}(h)^* \Del(x) \Lam_\AB(h) \Del(x) c \\
&\phantom{=\ } + c^* \Del(x)  \Lam_\AB(h) \Del(x) \Lam_{\AB}(h)^* \Del(x) c
+ c^* \Del(x)  \Lam_{\AB}(h)^* \Del(x) \Lam_\AB(h)^* \Del(x) c \Big]\\
&=2 c^* \Del(x) \left(\Lam_{\AB}(h)+\Lam_{\AB}(h)^*\right) 
    \Del(x) \left(\Lam_{\AB}(h)+\Lam_{\AB}(h)^*\right) \Del(x) c,
\end{split}
\end{equation}
respectively.

\subsection{Decomposing the complex Hessian}
A subset $\Omega \subset M(\C)^\gv$ is a sequence $\Omega = (\Omega(n))_n$, where
 $\Omega(n)\subset \mat{n}(\C)^\gv$.  The set $\Omega$ is \df{closed with respect to direct sums}
 if $X\in \Omega(n)$ and $Y\in \Omega(m)$ implies,
\[
 X\oplus Y =\begin{bmatrix} X & 0\\0&Y \end{bmatrix}
   = \left ( \begin{bmatrix} X_1 & 0\\ 0 & Y_1 \end{bmatrix}, \dots,
   \begin{bmatrix} X_\gv & 0\\0&Y_\gv \end{bmatrix} \right ) \in \Omega(n+m).
\]
 The descriptor realization $r$ as in \eqref{e:ratstate} is \df{plush on $\Omega$} if, 
 $r_{x,x^*}(X,X^*)[H,H^*]\succeq 0$ for each $n$,
 each $X\in \Omega(n)$ and each $H\in \mat{n}(\C)^\gv.$
 Given $X,\tX,H\in \mat{n}(\C)^\gv$, let
\begin{equation*}
 %\label{eq:arrow}
\begin{split}
r_\downarrow(X,\tX)[H]
   &=C^* \Delta(X)\Lambda_B(H)^*\Delta(\tX)\Lambda_B(H)\Delta(X)C,\\
r_\uparrow(X,\tX)[H]
  &=C^* \Delta(\tX)\Lambda_B(H)\Delta(X)\Lambda_B(H)^*\Delta(\tX)C,
\end{split}
\end{equation*}
where $C=c\otimes I_n.$

\begin{prop}
\label{p:realizationplushimplies}
   Suppose $\Omega\subset M(\C)^\gv$ is closed with respect to direct sums.
 Then the nc rational function $r$ as in \eqref{e:ratstate} is 
  plush on $\Omega$   if and only if
\begin{equation}
\label{e:rupdown}
\begin{split}
r_\downarrow(X,\tX)[H]\succeq 0
\quad \text{and}\quad
r_\uparrow(X,\tX)[H]\succeq 0
\end{split}
\end{equation}
for all $X,\tX\in\Omega$ and $H\in M(\C)^\gv.$  
\end{prop}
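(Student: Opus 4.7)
The plan is to reduce both directions to the Hessian formula \eqref{eq:snd} by exploiting the direct-sum closure of $\Omega$ together with a carefully chosen block-off-diagonal tangent direction. The key observation is that the two summands of $r_{x,x^*}$ on display in \eqref{eq:snd} degenerate, after evaluation at $X=\tX$, into $r_\downarrow$ and $r_\uparrow$; conversely, if we replace the middle $\Delta$-factor in one summand by $\Delta(\tX)$ while keeping the outer factors at $X$ (and vice versa in the other summand), we should be able to realize this algebraically as an honest Hessian evaluation at the direct sum $X\oplus\tX$.

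For the easy direction ($\Leftarrow$) I would just specialize $\tX=X$ in the hypothesis: then
\[
r_{x,x^*}(X,X^*)[H,H^*] = r_\downarrow(X,X)[H] + r_\uparrow(X,X)[H] \succeq 0
\]
for all $X\in\Omega$ and $H$, so $r$ is plush on $\Omega$.

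For the hard direction ($\Rightarrow$), fix $X,\tX\in\Omega(n)$ and $H\in\mat{n}(\C)^\gv$, and set
\[
Z = X\oplus\tX\in\mat{2n}(\C)^\gv,
\qquad
K_j = \begin{bmatrix} 0 & 0 \\ H_j & 0\end{bmatrix}, \quad j=1,\dots,\gv,
\]
so $Z\in\Omega(2n)$ by closure under direct sums. Using the natural block decomposition $\C^d\otimes\C^{2n}=(\C^d\otimes\C^n)\oplus(\C^d\otimes\C^n)$, one sees
\[
\Del(Z) = \begin{bmatrix}\Del(X)&0\\0&\Del(\tX)\end{bmatrix},
\qquad
\Lam_\AB(K) = \begin{bmatrix}0&0\\ \Lam_\AB(H)&0\end{bmatrix},
\qquad
\Lam_\AB(K)^{*} = \begin{bmatrix}0&\Lam_\AB(H)^{*}\\0&0\end{bmatrix},
\]
and $c\otimes I_{2n} = (c\otimes I_n)\oplus(c\otimes I_n)$. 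Plugging these into the Hessian formula \eqref{eq:snd} and multiplying out the block-triangular products, the two summands hit disjoint diagonal blocks and give
\[
r_{x,x^*}(Z,Z^*)[K,K^{*}]
= \begin{bmatrix} r_\downarrow(X,\tX)[H] & 0 \\ 0 & r_\uparrow(X,\tX)[H]\end{bmatrix}.
\]
Since $r$ is plush on $\Omega$ and $Z\in\Omega(2n)$, the left-hand side is positive semidefinite, so both diagonal blocks are positive semidefinite, yielding \eqref{e:rupdown}.

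The only real work is the block matrix verification; there is no analytic obstacle, and the identity above is the main thing to record. The whole argument is a tensor-product bookkeeping exercise whose power comes entirely from the closure of $\Omega$ under direct sums, which lets us decouple $X$ from $\tX$ inside a single Hessian evaluation.
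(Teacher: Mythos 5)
Your proof is correct and follows essentially the same route as the paper: both use the direct-sum trick, taking $Z=X\oplus\tX$ together with the strictly lower-triangular block tangent $K$, and both reduce the claim to the single block-diagonal identity $r_{x,x^*}(Z,Z^*)[K,K^*]=r_\downarrow(X,\tX)[H]\oplus r_\uparrow(X,\tX)[H]$. The only cosmetic difference is that you spell out the easy direction (set $\tX=X$ and invoke \eqref{eq:snd}) where the paper just says the result is immediate from the identity.
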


\begin{proof}
 Given $X,\tX\in\Omega(n)$ and $H\in \mat{n}(\C)^\gv,$ define
\[
\widehat{X}= \bem X & 0  \\
0 & \tX
\eem
 \qquad
\widehat{H}=\bem 0 & 0 \\
H & 0
\eem.
\]
 Since $\Omega$ is closed with respect to direct sums, $\widehat{X}\in \Omega(2n).$
For notational convenience, let $\Del =\Del(X),$ 
 $\tDel=\Del(\tX)$ and $C=c\otimes I_n$ and observe
\begin{multline*}
 r_{x,x^*}(\widehat{X},\widehat{X}^*)[\widehat{H},\widehat{H}^*]= \\
 \bem  C^*  &0\\  0& C^*  \eem
  \
 \bem  \Del &0\\  0& \tDel  \eem
 \bem
  0 &  \Lam_{\AB}(H)^* \\ 0    & 0  \eem
 \ \bem
 \Del &0\\
 0& \tDel
 \eem
\bem
 0 & 0 \\\  \Lam_\AB(H) & 0  \eem
\ \bem
 \Del &0\\
 0& \tDel
 \eem
  \bem
 C  &0\\
 0& C
 \eem\\
 +    \bem
 C^*  &0\\
 0& C^*
 \eem \
\bem
 \Del &0\\
 0& \tDel
 \eem\
\bem
 0 & 0 \\ \  \Lam_\AB(H)   &0
 \eem
\bem
 \Del &0\\
 0& \tDel
 \eem
\
\bem
0 &\  \Lam_{\AB}(H)^* \\0  &0
 \eem\
 \bem
 \Del &0\\
 0& \tDel
 \eem\ \
 \bem
 C  &0\\
 0& C
 \eem
\end{multline*}
and thus
\begin{equation*}
%\label{eq:chstatesplit}
%\begin{split}
0\preceq r_{x,x^*}(\widehat{X},\widehat{X}^*)[\widehat{H},\widehat{H}^*]=
\bem r_\downarrow(X,\tX)[H] & 0 \\ 0 & r_\uparrow(X,\tX)[H]\eem,
\end{equation*}
an identity from which the result immediately
follows. 
\end{proof}

Let
$\Pdown,\Pup:\C^d\to \C^d$ denote the projections onto $\rng \AB$ and $\rng \AB^*$
respectively.

\begin{cor}
 \label{cor:plushimplies}
   If $\Omega\subset M(\C)^\gv$  is closed with respect to direct sums and 
   both $\Pdown \Delta(X) \Pdown$ and $\Pup \Delta(X)\Pup$ are positive 
   semidefinite for each tuple $X\in \Omega$, then $r$ is plush on $\Omega$. 
\end{cor}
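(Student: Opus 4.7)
By Proposition \ref{p:realizationplushimplies}, plushness of $r$ on $\Omega$ is equivalent to the two matrix inequalities in \eqref{e:rupdown} holding for all $X,\tX\in\Omega$ and $H\in M(\C)^\gv$. My plan is to reduce each of these inequalities to a straightforward application of the hypothesis by exploiting the simple range/kernel structure of $\Lambda_\AB(H)$.

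The key observation is that $\rng \AB_j\subset \rng \AB$ and $\rng \AB_j^*\subset \rng \AB^*$ for each $j$, so (writing $X\in \mat{n}(\C)^\gv$ and viewing $\Pdown,\Pup$ as projections on $\C^d$ extended by $\Pdown\otimes I_n$, $\Pup\otimes I_n$ to $\C^d\otimes \C^n$)
\[
(\Pdown\otimes I_n)\,\Lambda_\AB(H)=\Lambda_\AB(H)=\Lambda_\AB(H)(\Pup\otimes I_n),
\]
and, taking adjoints,
\[
(\Pup\otimes I_n)\,\Lambda_\AB(H)^*=\Lambda_\AB(H)^*=\Lambda_\AB(H)^*(\Pdown\otimes I_n).
\]
These identities let me insert the projections freely around $\Delta(\tX)$ or $\Delta(X)$ in the expressions for $r_\downarrow$ and $r_\uparrow$.

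Indeed, substituting the two rightmost identities into the definition of $r_\downarrow(X,\tX)[H]$, I obtain
\[
r_\downarrow(X,\tX)[H]= N^*\,\bigl((\Pdown\otimes I_n)\,\Delta(\tX)\,(\Pdown\otimes I_n)\bigr)\,N,
\quad N:=\Lambda_\AB(H)\Delta(X)C,
\]
which is positive semidefinite because the middle factor is positive semidefinite by the hypothesis applied to $\tX\in\Omega$. Symmetrically, using the other pair of identities,
\[
r_\uparrow(X,\tX)[H]= N'^{\,*}\,\bigl((\Pup\otimes I_n)\,\Delta(X)\,(\Pup\otimes I_n)\bigr)\,N',
\quad N':=\Lambda_\AB(H)^*\Delta(\tX)C,
\]
which is positive semidefinite by the hypothesis applied to $X\in\Omega$. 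Both inequalities in \eqref{e:rupdown} therefore hold, and Proposition \ref{p:realizationplushimplies} yields the conclusion.

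There is really no obstacle here beyond spotting the range identities for $\Lambda_\AB(H)$ and $\Lambda_\AB(H)^*$; once these are in hand the inequality is a bare conjugation. The hypothesis that $\Omega$ is closed under direct sums is used only implicitly, through the invocation of Proposition \ref{p:realizationplushimplies}.
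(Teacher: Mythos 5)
Your proof is correct and follows the same route as the paper's: push the projections through $\Lambda_\AB(H)$ using the range inclusions $\rng \AB_j\subset\rng \AB$, $\rng \AB_j^*\subset\rng \AB^*$, so that each of $r_\downarrow$ and $r_\uparrow$ becomes an explicit conjugation of $P\Delta(\tX)P$ or $P_*\Delta(X)P_*$, then invoke Proposition \ref{p:realizationplushimplies}. The paper's own proof is just a terser phrasing of this same conjugation argument; you have written out the details correctly.
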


\begin{proof}
 For $X\in \Omega(n)$ and $H\in \mat{n}(\C)^\gv$, 
 since the range of $\Lambda_B(H) \Delta(X)$ lies in $\rng \AB\otimes \C^n$, 
  the result follows from Proposition \ref{p:realizationplushimplies}
  by choosing $\tX=X$ and using either of the inequalities of 
 \eqref{e:rupdown}.
\end{proof}

 For $\kv$ a positive integer and
 $\ve>0$ the {\bf (column) free  ball} $\bbB_\ve\subset M(\C)^\kv$ \index{free ball}
  of radius $\ve$ 
  is the sequence $\bbB_\ve = (\bbB_\ve(n))_n$ given by 
\[
  \bbB_\ve(n) =\left\{X\in\mat{n}(\C)^\kv\colon \sum_{j=1}^\kv X_j^* X_j\prec \ve^2 I_n \right\}
  \subset \mat{n}(\C)^\kv.
\] 
 Evidently free balls are closed with respect to direct sums.
 An nc rational mapping $q:M(\C)^\kv\dashrightarrow M(\C)^\hv$ regular at $0$ takes the form
 $q=\begin{bmatrix} q_1 & q_2 &\dots & q_\hv \end{bmatrix},$ where
 each $q_j\in \freef$ is regular at $0.$  Let
 $q_x(x)[h] = \begin{bmatrix} (q_1)_x(x)[h] & \dots & (q_\hv)_x(x)[h]\end{bmatrix}.$
 Thus $q_x(X)[H]\in \mat{n}(\C)^\hv$ for $X,H\in \mat{n}(\C)^\gv.$

\begin{cor}
\label{cor:compose-}
If $r$ is a symmetric nc rational function in $\hv$ variables
that is plush on some free ball and  if
 $q:M(\C)^\gv\dashrightarrow M(\C)^\hv$  is an nc rational mapping
that is regular at $0$ with $q(0)=0,$ then $\varphi= r\circ q$ is plush 
on some free ball.
\end{cor}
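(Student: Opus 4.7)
The plan is to apply the chain rule to $\varphi=r\circ q$, exploit the fact that $q$ is an analytic nc rational mapping (with no $x^*$ variables), and then use $q(0)=0$ together with continuity to transport plushness of $r$ from a free ball in $M(\C)^\hv$ back to a free ball in $M(\C)^\gv$. Let $y,y^*$ denote the variables of $r$, so that $\varphi(x,x^*)=r(q(x),q(x)^*)$.

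For the chain rule, I would first observe that $q(x)$ has no $x^*$, so differentiating $\varphi$ in $x$ (direction $h$) only perturbs the $y$-slot of $r$, giving
\[
\varphi_x(x,x^*)[h,h^*] = r_y\bigl(q(x),q(x)^*\bigr)\bigl[q_x(x)[h],\,\star\bigr],
\]
where $\star$ is immaterial since $r_y$ does not depend on its second bracket entry. Next, differentiating in $x^*$ (direction $h^*$), the only $x^*$-dependent piece is $q(x)^*$, and substituting $(x+th)^*$ for $x^*$ turns $q(x)^*$ into $q(x+th)^*$; taking the $t$-derivative at $0$ perturbs the $y^*$-argument by $(q_x(x)[h])^*$. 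Hence
\[
\varphi_{x,x^*}(x,x^*)[h,h^*] = r_{y,y^*}\bigl(q(x),q(x)^*\bigr)\bigl[q_x(x)[h],\,(q_x(x)[h])^*\bigr].
\]
This identity is valid entrywise upon evaluation at any compatible tuple $(X,X^*,H,H^*)$.

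For the second step, let $\bbB_\delta\subset M(\C)^\hv$ be a free ball on which $r$ is plush. Since $q$ is rational and regular at $0$ with $q(0)=0$, a descriptor-realization argument (analogous to the one used for $\Delta(X)$ earlier in this section) yields a free ball $\bbB_\epsilon\subset M(\C)^\gv$ on which $q$ and $q_x$ are defined and $q(\bbB_\epsilon(n))\subset \bbB_\delta(n)$ uniformly in $n$. Then for any $X\in\bbB_\epsilon(n)$ and any $H\in \mat{n}(\C)^\gv$, the chain rule identity combined with the plush hypothesis on $r$, applied at the point $q(X)$ in the direction $q_x(X)[H]\in \mat{n}(\C)^\hv$, gives $\varphi_{x,x^*}(X,X^*)[H,H^*]\succeq 0$. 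Thus $\varphi$ is plush on $\bbB_\epsilon$.

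The main obstacle is largely bookkeeping: justifying the chain rule cleanly from the limit definitions while distinguishing source variables $x$ from target variables $y$, and extracting a uniform radius $\epsilon$ across all matrix sizes. Neither step requires a new idea; the essential point is that the analyticity of $q$ eliminates all cross-terms involving $q_{x^*}$, so that the complex Hessian of the composition is literally the pull-back under $(q,q_x)$ of the complex Hessian of $r$.
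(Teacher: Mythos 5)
Your proof is correct and takes essentially the same route as the paper: both arguments hinge on the observation that analyticity of $q$ makes the complex Hessian of $\varphi=r\circ q$ the pullback of the complex Hessian of $r$, i.e., $\varphi_{x,x^*}(X,X^*)[H,H^*] = r_{y,y^*}\bigl(q(X),q(X)^*\bigr)\bigl[q_x(X)[H],(q_x(X)[H])^*\bigr]$, after which the free-ball nesting via $q(0)=0$ finishes the job. The only (inessential) difference is that the paper obtains this identity by direct computation from the realization formula \eqref{eq:snd}, writing it as $r_\downarrow(Y,Y)[E]+r_\uparrow(Y,Y)[E]$ and citing Proposition \ref{p:realizationplushimplies}, whereas you derive the chain rule abstractly from the limit definitions and appeal to plushness of $r$ directly.
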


\begin{proof}
We assume $r$ is  given in equation \eqref{e:ratstate} and is plush on $\bbB_\ve\subset M(\C)^\hv.$
By \eqref{eq:snd} and the chain rule
for $X,H\in \mat{n}(\C)^\hv,$ %
\[
%\begin{multline*}
\begin{split}
\varphi_{x,x^*}&(X,X^*)[H,H^*] 
\\  & =  C^* \Delta(q(X)) \Lambda_\AB(q_x(X)[H])^* \Delta(q(X)) \Lambda_\AB(q_x(X)[H]) \Delta(q(X))C
\\&\phantom{=\,} +C^* \Delta(q(X)) \Lambda_\AB(q_x(X)[H]) \Delta(q(X))\Lambda_\AB(q_x(X)[H])^* \Delta(q(X)) C 
\\ & = C^* \Delta(Y) \Lambda_\AB(E)^* \Delta(Y) \Lambda_\AB(E) \Delta(Y)C
+C^* \Delta(Y) \Lambda_\AB(E) \Delta(Y)\Lambda_\AB(E)^* \Delta(Y) C 
\\ & = r_\downarrow(Y,Y)[E] + r_\uparrow(Y,Y)[E],
%\end{multline*}
\end{split}
\]
 where $Y=q(X)$ and $E=q_x(X)[H]$ and $C=c\otimes I_n.$  
 Since $q(0)=0$, there is a $\delta>0$ such that for each $n$ and each  $X\in \bbB_\delta(n) \subset \mat{n}(\C)^\gv$,  we have $Y=q(X) \in \bbB_\ve(n)\subset \mat{n}(\C)^\hv.$ 
By Proposition \ref{p:realizationplushimplies}, $r_\downarrow(Y,Y)[E],r_\uparrow(Y,Y)[E]\succeq 0$
and hence $\varphi_{x,x^*}(X,X^*)[H,H^*]\succeq 0$ for all $n$,  $X\in \bbB_\delta(n)$ and 
 $H\in \mat{n}(\C)^\gv.$ Thus $\varphi$ is plush on $\bbB_\delta.$
\end{proof}

\subsection{Rational functions and realizations}
\label{sec:realizations}
  A foundational result in  the theory of nc rational functions 
  is the fact that a realization is  minimal if and only if it is observable 
 and controllable \cite{BGM}. For symmetric nc rational functions, the corresponding realizations 
  are symmetric descriptor realizations.\looseness=-1

  The formal domain of a symmetric descriptor realization $r$ as in equation
 \eqref{e:ratstate}  is the set of those $X\in M(\C)^\gv$ 
 such that  $J-\Lam_\AB(X) - \Lam_\AB(X)^*$ is invertible.  Two symmetric
 descriptor realizations $r$ and $s$ are \df{equivalent} if $r(X,X^*)=s(X,X^*)$
 for all $X$ in the intersection of the domains of $r$ and $s.$ 
 A \df{symmetric nc rational function} is an equivalence class
 of symmetric descriptor realizations. A symmetric descriptor realization is {\bf minimal} \index{minimal realization}
  if its
 size ($d$ in the case of the  realization of
 equation \eqref{e:ratstate}) is minimum amongst all elements
 of its equivalence class.

\begin{prop}
A symmetric descriptor realization
\begin{equation}\label{eq:miniff}
c^*(\JK-\sum_j\AB_jx_j-\sum \AB_j^*x_j^*)^{-1}c
\end{equation}
of size $d$ is minimal if and only if
\[
 \C^d = \spann\left\{w(\AB_1\JK,\dots,\AB_\gv\JK,\AB_1^*\JK,\dots,\AB_\gv^*\JK)c: w\in\Langle x,\tilde{x}\Rangle \right\}.
\]
\end{prop}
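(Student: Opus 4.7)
The plan is to reduce the claim to the standard minimality criterion for descriptor realizations cited just above from \cite{BGM}---that such a realization is minimal if and only if it is both controllable and observable---and then to exploit the symmetry of the realization to show that these two conditions coincide, and that the common spanning condition is exactly the one displayed in the proposition.

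First, I would expand the resolvent $\Delta(x)=(\JK-\Lambda_\AB(x)-\Lambda_{\AB^*}(x^*))^{-1}$ as a formal Neumann series.  Setting $L=\Lambda_\AB(x)+\Lambda_{\AB^*}(x^*)$ and using $\JK^2=I$, one obtains
\[
\Delta(x)=\JK\sum_{k\geq 0}(L\JK)^k,
\]
and $L\JK=\sum_j(\AB_j\JK)x_j+(\AB_j^*\JK)x_j^*$. Reading off the coefficient of the monomial $x_{j_1}^{\epsilon_1}\cdots x_{j_k}^{\epsilon_k}$ in $r(x,x^*)=c^*\Delta(x)c$ gives $(\JK c)^*\bigl(\AB_{j_1}^{\epsilon_1}\JK\bigr)\cdots\bigl(\AB_{j_k}^{\epsilon_k}\JK\bigr)c$. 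From this expression, the controllability (reachability) subspace of \eqref{eq:miniff} is
\[
\cC=\spann\bigl\{w(\AB_1\JK,\dots,\AB_\gv\JK,\AB_1^*\JK,\dots,\AB_\gv^*\JK)\,c:\;w\in\langle x,\tilde{x}\rangle\bigr\},
\]
which is exactly the span appearing in the proposition; taking adjoints and reversing words, the observability subspace is
\[
\cO=\spann\bigl\{w(\JK \AB_1,\dots,\JK \AB_\gv,\JK \AB_1^*,\dots,\JK \AB_\gv^*)\,\JK c:\;w\in\langle x,\tilde{x}\rangle\bigr\}.
\]
By \cite{BGM}, minimality of \eqref{eq:miniff} is equivalent to $\cC=\cO=\C^d$.

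Next I would use the symmetry of the realization to collapse these two conditions into a single one. The length-one identities $\JK(\AB_j\JK)=(\JK \AB_j)\JK$ and $\JK(\AB_j^*\JK)=(\JK \AB_j^*)\JK$ extend by straightforward induction on word length to
\[
\JK\,w(\AB_j\JK,\AB_j^*\JK)=w(\JK \AB_j,\JK \AB_j^*)\,\JK
\]
for every $w\in\langle x,\tilde{x}\rangle$. Applying both sides to $c$ yields $\JK\,\cC=\cO$; since $\JK$ is invertible, $\cC=\C^d$ if and only if $\cO=\C^d$. Thus the conjunction of controllability and observability collapses to the single spanning condition in the proposition.

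The one step that requires genuine care is the first: correctly translating the abstract controllability and observability conditions for \eqref{eq:miniff} into the explicit spans built from $\AB_j\JK$ and $\AB_j^*\JK$ (as opposed to, say, $\JK\AB_j$ acting on $\JK c$). Once this bookkeeping is done correctly, the intertwining identity and the invocation of \cite{BGM} finish the argument in a few lines.
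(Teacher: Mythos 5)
Your proof is correct and follows essentially the same route as the paper: both reduce to the controllability-plus-observability criterion for the monic realization $c^*\JK(I-\sum_j\AB_j\JK x_j-\sum_j\AB_j^*\JK x_j^*)^{-1}c$ via \cite{BGM} and then use the symmetry $\JK^*=\JK$ to show the two spanning conditions coincide. The only difference of detail is cosmetic: the paper states that, since $\JK^*=\JK$, the two conditions are ``clearly equivalent,'' whereas you make the mechanism explicit via the intertwining identity $\JK\,w(\AB_j\JK,\AB_j^*\JK)=w(\JK\AB_j,\JK\AB_j^*)\,\JK$; and you derive the controllability/observability spaces from the Neumann series rather than quoting them directly from \cite[Theorem~9.1]{BGM}.
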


\begin{proof}
Since $K^{-1}=K,$ the realization \eqref{eq:miniff} can be rewritten as a monic realization
\begin{equation}\label{eq:miniff1}
c^*\JK(I-\sum_j\AB_j\JK x_j-\sum \AB_j^*\JK x_j^*)^{-1}c.
\end{equation}
By \cite[Theorem 9.1]{BGM}, the realization \eqref{eq:miniff1} is minimal if and only if
\[\begin{split}
\C^d &= \spann\left\{w(\AB_1\JK,\dots,\AB_\gv\JK,\AB_1^*\JK,\dots,\AB_\gv^*\JK)c: w\in\Langle x,\tilde{x}\Rangle \right\},\\
\C^{1\times d} &= \spann\left\{c^*\JK w(\AB_1\JK,\dots,\AB_\gv\JK,\AB_1^*\JK,\dots,\AB_\gv^*\JK): w\in\Langle x,\tilde{x}\Rangle \right\}.
\end{split}\]
However, since $K^*=K,$ these two equalities are clearly equivalent.
\end{proof}

\section{A realization theoretic characterization of plush nc   rational functions}
\label{sec:necessary}
 This section is devoted to the proof of Theorem \ref{t:isplush-intro}, restated 
 as Theorem \ref{t:local} below.  A \df{free neighborhood of $0$} in $M(\C)^{\gv}$ is
 a sequence $\Omega = (\Omega(n))_n$, where $\Omega(n)\subset \mat{n}(\C)^\gv$
 is open and that contains some free ball.  In particular, a free ball is
 a free neighborhood of $0.$

  Throughout this section, 
 $r$ is a symmetric descriptor realization (of size $d$)
 as in  \eqref{e:ratstate} and $P$ and $P_*$
 are the orthogonal projections onto $\ran \AB$ and $\ran \AB^*$ respectively.

\begin{theorem}
\label{t:local}
 If  $\Pup \JK \Pup$ and $\Pdown \JK \Pdown$ are positive semidefinite, then $r$ is plush 
 on a free  ball; that is, there is an $\ve>0$ such that 
 $r_{x,x^*}(X,X^*)[H,H^*]\succeq 0$ for all $n$,  $X\in\bbB_\ve(n)$ and $H\in \mat{n}(\C).$

Conversely, if $r$ is plush on a free  ball and
the realization \eqref{e:ratstate} is minimal,
then $\Pup \JK \Pup$ and $\Pdown \JK\Pdown$ are both positive semidefinite.
\end{theorem}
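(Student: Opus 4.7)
My plan is to handle the two directions of Theorem \ref{t:local} separately, using the decomposition of the complex Hessian from Proposition \ref{p:realizationplushimplies} together with a Schur complement analysis of $\Delta(X)$ with respect to the splitting $\C^d = \ran P \oplus \ker P$. For sufficiency, I aim to prove the stronger statement that $P \Delta(X) P \succeq 0$ and $\Pup \Delta(X) \Pup \succeq 0$ for $X$ in a small free ball, whence plushness follows immediately from Corollary \ref{cor:plushimplies}.

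The key structural observation driving sufficiency is that $\Lam_\AB(X)$ has range in $\ran \AB = \ran P$, so the $(2,2)$ block of $E(X):=\Lam_\AB(X)+\Lam_\AB(X)^*$ vanishes identically, and the $(2,2)$ block of $\JK-E(X)$ equals the constant $\JK_{22}$. When $\JK_{22}$ is invertible, block inversion gives $P\Delta(X)P = M(X)^{-1}$ with
\[
M(X) = (\JK_{11}-E_{11}(X)) - (\JK_{12}-E_{12}(X))\,\JK_{22}^{-1}\,(\JK_{12}^*-E_{12}(X)^*).
\]
Exploiting the signature identities $\JK_{11}^2+\JK_{12}\JK_{12}^*=I$ and $\JK_{11}\JK_{12}+\JK_{12}\JK_{22}=0$ obtained from $\JK^2=I$, a short calculation yields $M(0)=\JK_{11}^{-1}$ whenever $\JK_{11}=P\JK P$ is invertible. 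In the strict case $P\JK P\succ 0$, continuity then provides a neighborhood of $0$ where $M(X)\succ 0$, hence $P\Delta(X)P\succ 0$; a parallel Schur complement along $\ran \AB^*$ yields $\Pup\Delta(X)\Pup\succeq 0$. The principal obstacle is the degenerate case when $P\JK P$ has a nontrivial kernel $N$: the relation $\JK_{22}\JK_{12}^*=-\JK_{12}^*\JK_{11}$ then forces $\JK_{12}^*N\subseteq \ker \JK_{22}$, so $\JK_{22}$ is also singular and the naive Schur complement breaks down. I expect to handle this by showing that $N$ and $\JK_{12}^*N$ form a $\JK$-hyperbolic pair inside $\C^d$ and factoring this pair out of the realization via a similarity, reducing to a strict-case realization on an invariant complementary subspace where both $\JK_{11}$ and $\JK_{22}$ become invertible.

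For necessity, invoke Proposition \ref{p:realizationplushimplies} to deduce $r_\downarrow(X,\tX)[H]\succeq 0$ and $r_\uparrow(X,\tX)[H]\succeq 0$ for all $X,\tX$ in the free ball and all $H$. Specializing $\tX=0$ in $r_\downarrow$ yields
\[
\bigl(\Lam_\AB(H)\,\Delta(X)\,C\bigr)^*\JK\,\bigl(\Lam_\AB(H)\,\Delta(X)\,C\bigr)\succeq 0,
\]
so $v^*\JK v\ge 0$ for every $v$ in the span of the columns of $\Lam_\AB(H)\,\Delta(X)\,C$ as $X$, $H$, and $n$ vary. Taylor-expanding $\Delta(X)c$ in $X$ produces vectors of the form $w(\AB_1\JK,\ldots,\AB_\gv^*\JK)c$ for words $w\in\langle x,\tilde x\rangle$, which by minimality span $\C^d$; then applying $\Lam_\AB(H)$ and varying $H$ and $n$ makes the resulting vectors span $\ran \AB\otimes \C^n = \ran P\otimes \C^n$ for $n$ sufficiently large. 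Consequently $v^*\JK v\ge 0$ for every $v\in \ran P$, i.e.\ $P\JK P\succeq 0$, and the symmetric argument applied to $r_\uparrow$ yields $\Pup\JK\Pup\succeq 0$.
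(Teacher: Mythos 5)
Your two directions have separate problems, so let me address them separately.

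\textbf{Necessity.} The step where you pass from ``$z^*(\JK\otimes I)z\geq 0$ for every $z$ in the image of $\Lam_\AB(H)\Delta(X)C$'' to ``$v^*\JK v\geq 0$ for every $v$ in the \emph{span} of such images as $X$, $H$, and $n$ vary'' is not valid. The inequality $z^*(\JK\otimes I)z\geq 0$ is quadratic, not linear: knowing it on a union of subspaces (indexed by the choice of $X$, $n$ and coordinate vector $v\in\C^n$) does not give it on their span. To make the argument work you need a \emph{single} tuple $X\in\bbB_\ve(n)$ and a single vector $v\in\C^n$ for which $z_0=\Delta(X)(c\otimes v)$ already has $d$ linearly independent components; only then does Lemma~\ref{l:span} force $\{\Lam_\AB(H)z_0: H\}$ to equal all of $\ran \AB\otimes\C^n$, and only then does the positivity of the quadratic form on that one subspace give $\Pdown\JK\Pdown\succeq 0$. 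The existence of such $(X,v)$ is exactly Lemma~\ref{l:indep}, proved via the local-global linear dependence principle and the structural lemmas of \cite{HMV06}; a Taylor expansion of $\Delta(X)c$ near $0$ does not produce a single $z_0$ with $d$ linearly independent components, it only shows that power series coefficients span $\C^d$, which is minimality rather than what is needed here.

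\textbf{Sufficiency.} Your Schur complement computation $M(0)=(\Pdown\JK\Pdown)^{-1}$ in the nondegenerate case is correct, and the structural observation $E_{22}(X)=0$ because $\rng\Lam_\AB(X)\subset\ran \AB\otimes\C^n$ is the right starting point. However you correctly identify (and leave unresolved) the genuine difficulty: when $\cN=\ker(\Pdown\JK\Pdown)\neq\{0\}$, the relation $\JK_{22}\JK_{12}^*=-\JK_{12}^*\JK_{11}$ forces $\JK_{12}^*\cN\subseteq\ker\JK_{22}$ and moreover $\JK_{12}^*$ is injective on $\cN$ (since $v=\JK(\JK_{12}^*v)$ for $v\in\cN$), so $\JK_{22}$ is \emph{always} singular in the degenerate case and the block inversion does not even apply to $\Delta(X)$ itself. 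This is exactly the case where the theorem has content, and the ``factor out a $\JK$-hyperbolic pair via similarity'' plan is not carried out; it is not explained how the similarity interacts with both $\Lam_\AB$ and $\Lam_{\AB^*}$ simultaneously or how it produces an equivalent realization of smaller size that is still of the required form. The paper (Proposition~\ref{prop:right}) sidesteps the Schur complement entirely: it splits $\ran\AB$ into its $\JK$-neutral part $\cR^0$ and a complement $\cR^+$ on which $\JK$ is uniformly positive, and then shows directly, via the Neumann series for $\Delta(X)$ and the range containment $\Phi(X,X^*)(\JK\otimes I)(\cR^0\otimes\C^n)\subseteq\ran\AB\otimes\C^n$, that the distorted copy $\cS_X=[I-\Phi(X,X^*)(\JK\otimes I)](\cR^0\otimes\C^n)$ lies in the kernel of $(Q\otimes I)^*\Delta(X)(Q\otimes I)$ while $\cR^+\otimes\C^n$ is a complementary subspace on which this operator is uniformly positive. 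This handles the degenerate and nondegenerate cases in one stroke without ever inverting $\JK_{22}$. I would recommend abandoning the Schur complement and adopting that decomposition, or else fully working out the hyperbolic-pair reduction with proofs that the resulting smaller realization satisfies the same hypotheses.
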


Theorem \ref{t:local} follows by combining Propositions \ref{p:left} and 
\ref{prop:right} below. Recall the notations  $\Del(x)$ and $\Del(X)$ from equations \eqref{e:Delta} and
 \eqref{e:DeltaX}.

\begin{lemma}\label{l:indep}
If the realization \eqref{e:ratstate} is minimal, then 
for  every $\ve>0$ there exists an $n$,
an $X\in \bbB_\ve(n)$  %
and a vector  $v\in \C^n$ such that 
\[
  z=\Delta(X)(c\otimes v)\in\C^d\otimes\C^n
\]
has $d$ linearly independent components in $\C^n$; that is,
writing $z=\sum_{j=1}^d e_j \otimes z_j$, the set $\{z_1,\dots,z_d\}\subset \C^n$
 is linearly independent.
\end{lemma}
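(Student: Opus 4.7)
The plan is to recast the conclusion as the injectivity of a linear map $\C^d \to \C^n$, use minimality to show that this map has trivial common kernel as $(X, v)$ ranges over admissible pairs in any free ball, and then assemble a single $(X, v)$ by direct-summing finitely many such pieces.

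For $\alpha \in \C^d$, introduce the scalar nc rational function $g_\alpha(x, x^*) := \alpha^T \Delta(x) c \in \freefstar$, which is regular at $0$. Using $\JK^2 = I$, rewrite $\Delta(x) = \JK \bigl(I - (\Lam_\AB(x) + \Lam_{\AB^*}(x^*)) \JK\bigr)^{-1}$ and Neumann-expand to get the formal power series
\[
g_\alpha(x, x^*) = \sum_w \alpha^T \JK\, w(\AB_1 \JK, \dots, \AB_\gv \JK, \AB_1^* \JK, \dots, \AB_\gv^* \JK)\, c \cdot w(x, x^*),
\]
where $w$ ranges over $\Langle x, \tilde{x}\Rangle$ (with $\tilde{x}_j$ playing the role of $x_j^*$). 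Since $\JK$ is invertible, any $\alpha \ne 0$ yields a nonzero functional $\alpha^T \JK$ on $\C^d$, which by the minimality hypothesis cannot annihilate the spanning set $\{w(\AB_1 \JK, \dots, \AB_\gv^* \JK) c\}_w$. Thus some coefficient of $g_\alpha$ is nonzero, so $g_\alpha \ne 0$ in $\freefstar$.

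For each admissible pair $(X, v)$ with $X \in \mat{n}(\C)^\gv$ in the domain of $\Delta$ and $v \in \C^n$, define $\Phi_{X, v}\colon \C^d \to \C^n$ by $\Phi_{X, v}(\alpha) := g_\alpha(X, X^*) v = (\alpha^T \otimes I_n)\, \Delta(X)(c \otimes v)$. Writing $z := \Delta(X)(c \otimes v) = \sum_{i=1}^d e_i \otimes z_i$, one has $\Phi_{X, v}(\alpha) = \sum_i \alpha_i z_i$, so the lemma is equivalent to finding $(X, v)$ with $X \in \bbB_\ve(n)$ for which $\Phi_{X, v}$ is injective. Fix $\ve > 0$. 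For each $0 \ne \alpha$, the nonzero rational function $g_\alpha$ has a convergent Taylor expansion at $0$ and so cannot vanish identically on $\bbB_\ve$; hence there exist $X_\alpha \in \bbB_\ve(n_\alpha)$ and $v_\alpha \in \C^{n_\alpha}$ with $g_\alpha(X_\alpha, X_\alpha^*) v_\alpha \ne 0$, i.e., $\alpha \notin \ker \Phi_{X_\alpha, v_\alpha}$. Therefore $\bigcap_{(X, v)} \ker \Phi_{X, v} = \{0\}$, and by finite-dimensionality of $\C^d$ finitely many such pairs $(X_j, v_j)$ with $X_j \in \bbB_\ve(n_j)$, $j = 1, \dots, m$, already satisfy $\bigcap_{j=1}^m \ker \Phi_{X_j, v_j} = \{0\}$.

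Finally, assemble $X := X_1 \oplus \dots \oplus X_m \in \mat{n}(\C)^\gv$ with $n := n_1 + \dots + n_m$ and $v := (v_1, \dots, v_m) \in \C^n$. Block-diagonality of $X$ makes $\JK \otimes I_n - \Lam_\AB(X) - \Lam_{\AB^*}(X^*)$ block diagonal (in the $\C^n$ slot) with blocks $\JK \otimes I_{n_j} - \Lam_\AB(X_j) - \Lam_{\AB^*}(X_j^*)$, whence $\Delta(X)$ splits accordingly into the $\Delta(X_j)$. Consequently $\Phi_{X, v}(\alpha) = \bigl(\Phi_{X_1, v_1}(\alpha), \dots, \Phi_{X_m, v_m}(\alpha)\bigr)$ and $\ker \Phi_{X, v} = \bigcap_j \ker \Phi_{X_j, v_j} = \{0\}$; the free-ball constraint is preserved because $\sum_k X_k^* X_k = \bigoplus_j \sum_k X_{j, k}^* X_{j, k} \prec \ve^2 I_n$. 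The main obstacle is the opening step of exploiting minimality to secure $g_\alpha \ne 0$ for every $\alpha \ne 0$; once that linear independence is in hand, nonvanishing of nonzero nc rational functions on free balls, the finite-dimensional intersection argument, and the direct-sum assembly are all routine.
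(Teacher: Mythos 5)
Your proof is correct, and it takes a genuinely different (and more self-contained) route than the paper. The paper simply reduces to the hermitian-variable versions of \cite[Lemmas 7.2 and 7.4]{HMV06} and justifies their validity by appealing to the local-global principle for linear dependence of \cite{BK}. You instead give a direct argument: (i) you recast the conclusion as injectivity of the linear map $\Phi_{X,v}\colon\alpha\mapsto(\alpha^T\otimes I_n)\Delta(X)(c\otimes v)$; (ii) via the Neumann expansion $\Delta(x) = \JK\bigl(I-(\Lam_\AB(x)+\Lam_{\AB^*}(x^*))\JK\bigr)^{-1}$, you observe that the word-coefficients of $g_\alpha=\alpha^T\Delta(x)c$ are exactly $\alpha^T\JK\,w(B_1\JK,\dots,B_\gv^*\JK)c$, so the minimality condition (the displayed span being all of $\C^d$) forces $g_\alpha\neq0$ for $\alpha\neq0$; (iii) you use that a nonzero nc rational function regular at $0$ cannot vanish on a free ball; and (iv) you run an intersection-of-kernels argument in the finite-dimensional space $\C^d$, followed by a direct-sum assembly that turns finitely many witnesses into a single pair $(X,v)$ in $\bbB_\ve$. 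The direct-sum step is what lets you bypass the general local-global principle: that principle is needed precisely to rule out a situation where the dependency vector varies with the evaluation point, and your block-diagonal construction eliminates that possibility by hand. What you do still implicitly invoke in step (iii) is the one-function instance (a nonzero nc rational function does not vanish identically near $0$), which is substantially weaker and standard. Net comparison: the paper's argument is shorter by citation but outsources the work; yours is longer on the page but elementary, constructive, and does not require importing the full machinery of \cite{BK} or the specific lemmas of \cite{HMV06}.
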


\begin{proof}
 Substitute $x_j=y_j+iy'_j$ to obtain the matrix-valued symmetric nc rational function
 $\widetilde{\Delta}(y,y^\prime) = \Delta(x)$ in  $2\gv$ symmetric variables and apply a
 hermitian version of \cite[Lemmas 7.2 and 7.4]{HMV06} (which hold because the local-global
principle of linear dependence also works in hermitian settings, cf.~\cite{BK})
 to obtain the desired conclusion.
\end{proof}

\begin{lemma}\label{l:span}
Let $\{e_1,\dots,e_d\}$ denote a basis for $\C^d$ and $\kv$ be a positive integer.
If  $z=\sum_{i=1}^d e_i\otimes z_i\in \CC^d\otimes \CC^n$  and
$\{z_1,\dots,z_d\}$ is a linearly independent set of vectors in $\C^n$, then for any
 $E\in M_d(\C)^\kv,$
\[
\left\{\Lambda_E(H)z\colon H\in \mat{n}(\C)^\kv\right\}
= \rng E  \otimes \C^n.
\]
\end{lemma}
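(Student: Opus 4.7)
The plan is to show that the set
\[
S \;=\; \{\Lambda_E(H)z : H\in \mat{n}(\C)^\kv\}
\]
is in fact a linear subspace of $\C^d\otimes \C^n$, prove the easy inclusion $S\subseteq \rng E\otimes \C^n$ by direct expansion, and then exploit the linear independence of $\{z_1,\dots,z_d\}$ to produce, for each elementary tensor $u\otimes v$ with $u\in\rng E$ and $v\in\C^n$, an explicit $H\in \mat{n}(\C)^\kv$ with $\Lambda_E(H)z=u\otimes v$. Since elementary tensors span $\rng E\otimes \C^n$, this will finish the proof.

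\textbf{Step 1 (linearity).} Because $\Lambda_E(H)=\sum_j E_j\otimes H_j$ is linear in $H=(H_1,\dots,H_\kv)$, we have $\Lambda_E(H)z+\Lambda_E(H')z=\Lambda_E(H+H')z$ and $\alpha\,\Lambda_E(H)z=\Lambda_E(\alpha H)z$ for every scalar $\alpha$. Hence $S$ is a linear subspace of $\C^d\otimes \C^n$.

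\textbf{Step 2 (easy inclusion).} Expanding,
\[
\Lambda_E(H)z \;=\; \sum_{i=1}^d\sum_{j=1}^\kv (E_j e_i)\otimes (H_j z_i),
\]
and since $E_j e_i\in \rng E_j\subseteq \rng E$, each summand lies in $\rng E\otimes \C^n$. Therefore $S\subseteq \rng E\otimes \C^n$.

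\textbf{Step 3 (reverse inclusion, the main point).} Fix $u\in\rng E$ and $v\in\C^n$; write $u=\sum_{j=1}^\kv E_j w_j$ with $w_j=\sum_{i=1}^d (w_j)_i\, e_i\in\C^d$. The crucial observation is that since $\{z_1,\dots,z_d\}$ is linearly independent in $\C^n$ (in particular $d\le n$), one may extend it to a basis of $\C^n$ and thereby, for each $j$, define $H_j\in \mat{n}(\C)$ by prescribing
\[
H_j z_i \;=\; (w_j)_i\, v \qquad (i=1,\dots,d)
\]
and arbitrarily on a complement. With this choice,
\[
\Lambda_E(H)z \;=\; \sum_{i,j} (E_j e_i)\otimes \bigl((w_j)_i v\bigr)
\;=\; \sum_j \Bigl(\sum_i (w_j)_i E_j e_i\Bigr)\otimes v
\;=\; \sum_j (E_j w_j)\otimes v \;=\; u\otimes v.
\]
Hence every elementary tensor $u\otimes v$ with $u\in\rng E$, $v\in\C^n$ lies in $S$. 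Since $S$ is a subspace by Step 1 and elementary tensors span $\rng E\otimes \C^n$, we conclude $\rng E\otimes \C^n\subseteq S$, which combined with Step 2 gives equality.

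The only substantive point is Step 3, where linear independence of the $z_i$ is used to solve the interpolation problem $H_j z_i=(w_j)_i v$; everything else is bookkeeping with the bilinear form $\Lambda_E(\cdot)\cdot$.
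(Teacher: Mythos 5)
Your proof is correct and follows essentially the same approach as the paper's: both use the linear independence of $\{z_1,\dots,z_d\}$ to prescribe the action of each $H_j$ on the $z_i$, thereby realizing arbitrary elements of $\rng E\otimes\C^n$ as $\Lambda_E(H)z$. The only cosmetic difference is that the paper hits one generator $E_{j_0}e_{i_0}\otimes f$ at a time and then invokes the subspace property, whereas you construct a general $u\otimes v$ with $u\in\rng E$ in a single step.
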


\begin{proof}
We have
$$\Lambda_E(H)z= \sum_{j=1}^\kv (E_j\otimes  H_j)z= \sum_{j=1}^\kv\sum_{i=1}^d E_je_i\otimes H_jz_i.$$
Fix $1\le i_0\le d$, $1\le j_0\le \kv$ and an $f\in \C^n.$  Let $H_j=0$ for $j\neq j_0$ and
 let $H_{j_0}$ be such that $H_{j_0}z_i=0$ for $i\neq i_0$ and $H_{j_0}z_{i_0}= f.$  Then
$$\Lambda_E(H)z=E_{j_0}e_{i_0}\otimes f.$$
Since  $\cS =\left\{\Lambda_E(H)z\colon H\in \mat{n}(\C)^\kv\right\}\subset \C^d\otimes\C^n$
is a subspace, it follows that $\cS \supseteq [\rng E_{j_0}] \otimes \C^n$ and finally
that $\cS \supseteq [\rng E] \otimes \C^n.$ Since the reverse inclusion is
evident, the proof is complete.
\end{proof}

\begin{prop}[Necessity]
\label{p:left}
 Suppose $r$ as in \eqref{e:ratstate} is a minimal realization.  
 If there is an $\ve>0$ such that  
 $r_\downarrow(X,0)[H]\succeq0$ for all $n$,  all  $X\in\bbB_\ve(n),$
 and all $H\in \mat{n}(\C)^\gv,$  then $\Pdown \JK\Pdown\succeq0$. 
 In particular, if
$r$ is plush on some free  ball, then $\Pdown \JK \Pdown$
and  $\Pup \JK \Pup$ are both positive semidefinite.
\end{prop}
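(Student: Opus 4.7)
The plan is to exploit the identity $\Del(0) = \JK$ (which holds because $\JK$ is a signature matrix, so $\JK^{-1} = \JK$), turning the hypothesized inequality into a concrete positivity statement about $\JK$ restricted to a tensor-product subspace. Substituting $\tX = 0$ in the definition of $r_\downarrow$ and writing $C = c\otimes I_n$ gives
\[
r_\downarrow(X,0)[H] \;=\; C^*\,\Del(X)\,\Lam_\AB(H)^*\,(\JK\otimes I_n)\,\Lam_\AB(H)\,\Del(X)\, C.
\]
For any $v\in\C^n$, setting $z := \Del(X)(c\otimes v) \in \C^d\otimes\C^n$ and $w := \Lam_\AB(H) z$ converts the scalar $v^*\,r_\downarrow(X,0)[H]\,v$ into $w^*(\JK\otimes I_n)w$. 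Thus the hypothesis becomes $w^*(\JK\otimes I_n) w \geq 0$ for all such $w$.

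Next I would invoke minimality together with the two preceding lemmas to show that, as $v$ and $H$ vary, $w$ covers all of $\rng\AB\otimes\C^n$. Lemma \ref{l:indep} supplies some $n$, some $X\in\bbB_\ve(n)$ and some $v\in\C^n$ for which the $d$ components of $z$ are linearly independent in $\C^n$. Lemma \ref{l:span} (applied with $E=\AB$) then shows that as $H$ ranges over $\mat{n}(\C)^\gv$, the vector $w = \Lam_\AB(H)z$ sweeps out $\rng\AB\otimes\C^n$. Consequently $w^*(\JK\otimes I_n) w \geq 0$ for every $w\in \rng\AB\otimes\C^n$. Since $\Pdown\otimes I_n$ is the orthogonal projection onto $\rng\AB\otimes\C^n$, this is equivalent to $(\Pdown \JK \Pdown)\otimes I_n \succeq 0$, which forces $\Pdown \JK \Pdown\succeq 0$.

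For the \emph{in particular} clause, if $r$ is plush on a free ball $\bbB_\ve$, then since $\bbB_\ve$ is closed under direct sums, Proposition \ref{p:realizationplushimplies} yields both $r_\downarrow(X,\tX)[H]\succeq 0$ and $r_\uparrow(X,\tX)[H]\succeq 0$ for all $X,\tX\in\bbB_\ve$ and all $H$. Specializing $\tX=0$ in the first and invoking the argument above gives $\Pdown \JK\Pdown\succeq 0$. For $\Pup \JK\Pup\succeq 0$, I would instead set $X=0$ in the $r_\uparrow$-inequality; the same $\Del(0)=\JK$ trick produces $u^*(\JK\otimes I_n) u \geq 0$ with $u := \Lam_\AB(H)^*z$, and Lemma \ref{l:span} applied with $E = \AB^*$ (after the observation $\Lam_\AB(H)^* = \Lam_{\AB^*}(H^*)$, so replacing $H$ by $H^*$ does not restrict the range) shows that $u$ sweeps $\rng\AB^*\otimes\C^n$, giving $\Pup \JK\Pup\succeq 0$.

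The main obstacle is essentially bookkeeping: one must disentangle the roles of $v\in\C^n$ (used via Lemma \ref{l:indep} to extract linearly independent coordinates of $z$) and of $H$ (used via Lemma \ref{l:span} to fill out $\rng\AB\otimes\C^n$), and then observe that positivity at the tensor-product level descends to $\Pdown \JK\Pdown$ itself. No machinery beyond Proposition \ref{p:realizationplushimplies} and Lemmas \ref{l:indep} and \ref{l:span} appears to be needed.
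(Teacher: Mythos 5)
Your proposal is correct and follows essentially the same route as the paper: invoke Lemma \ref{l:indep} (via minimality) to obtain $X$ and $v$ with $z=\Delta(X)(c\otimes v)$ having $d$ independent components, use $\Delta(0)=\JK\otimes I_n$ to reduce $v^*r_\downarrow(X,0)[H]\,v$ to $z^*\Lambda_B(H)^*(\JK\otimes I_n)\Lambda_B(H)z\ge 0$, and then apply Lemma \ref{l:span} to conclude that $\Lambda_B(H)z$ sweeps $[\rng B]\otimes\C^n$, forcing $\Pdown\JK\Pdown\succeq 0$. Your explicit treatment of the $\Pup$ case by setting $X=0$ in the $r_\uparrow$ inequality and applying Lemma \ref{l:span} with $E=\AB^*$ is a correct unpacking of what the paper leaves implicit (by symmetry).
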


\begin{proof}
Since the realization \eqref{e:ratstate} is assumed minimal, 
 Lemma \ref{l:indep} implies there
exists an $n$, a tuple $X\in \bbB_\ve(n),$  and a vector $v$ 
such that 
$z=\Delta(X)(c\otimes I)v\in\C^d\otimes\C^n$ has $d$ linearly 
independent components in $\C^n$. By assumption, for this $X$ and $v$ and all $H$,
\begin{equation}\label{eq:psd}
  v^* r_\downarrow(X,0)[H] v =  z^*\Lambda_B(H)^*(\JK\otimes I)\Lambda_B(H)z \ge0.
\end{equation}
By Lemma \ref{l:span}, 
  $\{\Lambda_B(H)z:H\in \mat{n}(\C)^\gv\}= [\ran \AB ]\otimes \C^n.$ Thus 
$\Pdown \JK\Pdown\succeq0$ by \eqref{eq:psd}.
\end{proof}

\begin{prop}[Sufficiency]
\label{prop:right}
 Let $Q$ and $R$ denote the inclusions of  $\ran \AB$ and $\ran \AB^*$ into $\C^d$ 
 respectively. If $P\JK P$ and $P_*\JK P_*$ are both positive semidefinite, then 
 there is an $\ve>0$ such that for each $n$ and $X\in \bbB_\ve(n)$, 
 both  $(Q\otimes I_n)^* \Delta(X) (Q\otimes I_n)$
 and  $(R\otimes I_n)^*\Delta(X)(R\otimes I_n)$ are positive semidefinite
 and $r$ is plush on $\bbB_\ve.$
\end{prop}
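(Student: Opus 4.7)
The plan is to invoke Corollary~\ref{cor:plushimplies}: it suffices to produce $\ve > 0$ such that both $(Q\otimes I_n)^*\Delta(X)(Q\otimes I_n)$ and $(R\otimes I_n)^*\Delta(X)(R\otimes I_n)$ are positive semidefinite for every $n$ and $X \in \bbB_\ve(n)$; then $r$ is plush on $\bbB_\ve$. By the symmetry swapping $B$ with $B^*$ (and hence $P$ with $P_*$, $Q$ with $R$), it suffices to prove the inequality for $Q$.

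The structural identities $\Lambda_B(X) = P\Lambda_B(X) P_*$ and $\Lambda_{B^*}(X^*) = P_*\Lambda_{B^*}(X^*) P$ yield the factorization $\Lambda(X) = U\Xi U^*$ where $U = \begin{bmatrix} Q & R \end{bmatrix}$ and $\Xi = \begin{pmatrix} 0 & \alpha \\ \alpha^* & 0 \end{pmatrix}$ with $\alpha := Q^*\Lambda_B(X) R$ (the matrix $\Xi$ and its block $\alpha$ are $X$-dependent). Starting from $\Delta(X) = K + K\Lambda(X)\Delta(X)$ and sandwiching by $U^*$ and $U$ produces the identity
\[
U^*\Delta(X) U \;=\; G\bigl(I - \Xi G\bigr)^{-1} \;=\; \bigl(I - G\Xi\bigr)^{-1} G, \qquad G := U^*KU,
\]
valid on a sufficiently small free ball, with both expressions automatically Hermitian since $\Delta(X)$ is. By hypothesis the diagonal blocks $\alpha_0 := Q^*KQ$ and $\gamma_0 := R^*KR$ of $G$ are positive semidefinite; denote the off-diagonal block by $\beta_0 := Q^*KR$. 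Block matrix inversion combined with the identity $(I - \alpha^*\beta_0)^{-1}\alpha^* = \alpha^*(I - S)^{-1}$, where $S := \beta_0\alpha^*$, then yields the closed form
\[
Q^*\Delta(X) Q \;=\; \bigl(I - S\bigr)^{-1}\alpha_0\bigl(I - \alpha\Psi\bigr)^{-1}, \qquad \Psi := \beta_0^* + \gamma_0(I - \alpha^*\beta_0)^{-1}\alpha^*\alpha_0.
\]

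The specialization $\gamma_0 = 0$ collapses this to $(I - S)^{-1}\alpha_0(I - S^*)^{-1}$, a congruence of $\alpha_0 \succeq 0$ and hence positive semidefinite. For general $\gamma_0 \succeq 0$, the resolvent identity
\[
(I - \alpha\Psi)^{-1} - (I - S^*)^{-1} \;=\; (I - S^*)^{-1} E (I - \alpha\Psi)^{-1}, \qquad E := \alpha\gamma_0(I - \alpha^*\beta_0)^{-1}\alpha^*\alpha_0,
\]
separates $Q^*\Delta(X) Q$ into the positive semidefinite main summand $(I - S)^{-1}\alpha_0(I - S^*)^{-1}$ plus a $\gamma_0$-dependent residue; the hypothesis $\gamma_0 \succeq 0$ is then invoked to force positivity of the residue.

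The principal obstacle is establishing positivity of the $\gamma_0$-residue: it is Hermitian (since both $Q^*\Delta(X) Q$ and the main summand are) but is not transparently of the form $Z\gamma_0 Z^*$. A symmetrization argument leveraging the two equivalent expressions $G(I - \Xi G)^{-1}$ and $(I - G\Xi)^{-1} G$ for $U^*\Delta(X) U$, combined with $\gamma_0 \succeq 0$, is required to close the argument; both hypotheses $\alpha_0 \succeq 0$ and $\gamma_0 \succeq 0$ are essential.
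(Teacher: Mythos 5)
Your setup (reduce to $(Q\otimes I_n)^*\Delta(X)(Q\otimes I_n)\succeq 0$ and invoke Corollary~\ref{cor:plushimplies}) and the factorization $\Phi(X,X^*)=U\Xi U^*$, $U^*\Delta U=(I-G\Xi)^{-1}G$, are sound. But the proof has a genuine gap, and you say so yourself: you never establish that the $\gamma_0$-dependent residue is positive semidefinite, deferring to an unspecified ``symmetrization argument'' that you state is ``required to close the argument'' but do not supply. That residue is precisely the crux of Proposition~\ref{prop:right}; the hypotheses $P\JK P\succeq 0$, $P_*\JK P_*\succeq 0$ control only the diagonal blocks $\alpha_0,\gamma_0$ of $G$, while the off-diagonal block $\beta_0=Q^*\JK R$ is unconstrained, and it is exactly the interaction of $\beta_0$ with $\gamma_0$ through the block inverse that must be tamed. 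Since $\alpha_0$ and $\gamma_0$ are merely positive \emph{semi}definite, the Schur-complement manipulations you sketch will also run into invertibility issues whenever $\cR=\rng B$ contains a nontrivial $\JK$-neutral part, which is the hard case.

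The paper's proof handles this by a geometric, rather than block-algebraic, argument. It splits $\cR$ into its $\JK$-neutral part $\cR^0$ and a complement $\cR^+$ on which $\JK$ is uniformly positive definite ($Q_+^*\JK Q_+\succeq\eta I$). It then shows that $\cS_X:=\bigl(I-\Phi(X,X^*)(\JK\otimes I_n)\bigr)(\cR^0\otimes\C^n)$ is again a subspace of $\cR\otimes\C^n$, that $\cS_X$ is annihilated by $(Q^*\otimes I_n)\Delta(X)(Q\otimes I_n)$ (because $Q^*\JK\cR^0=\{0\}$), that $\cS_X$ is transverse to $\cR^+\otimes\C^n$, and that on $\cR^+\otimes\C^n$ a Neumann-series estimate gives $(Q_+^*\otimes I_n)\Delta(X)(Q_+\otimes I_n)\succeq\tfrac{\eta}{2}I$ for $X$ in a small free ball; combining these yields positivity on all of $\cR\otimes\C^n$. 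This sidesteps the block inverse entirely and makes the degenerate (neutral) directions explicit and harmless. To complete your route you would need, at minimum, a lemma of the type: for signature $\cJ$, $\cJ$-nonnegative $\cR$, and small Hermitian $T$ whose range and corange lie in $\cR$, the compression of $(\cJ-T)^{-1}$ to $\cR$ is positive semidefinite; but here $T=\Phi$ has range in $\cR+\cR_*$, not in $\cR$ alone, so that lemma does not apply directly, which is why the paper's proof works with $\cR^0$ and $\cR^+$ instead of block inverses.
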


Proposition \ref{prop:right} can be deduced as a consequence of the construction
in Section \ref{sec:main-result}. A direct proof follows and starts with some geometric definitions.  

For the  $d\times d$ signature matrix $\JK$, 
a subspace $\cN\subset \C^d$ is \df{$\JK$-nonnegative} if\looseness=-1
\[
 \langle \JK h,h\rangle \ge 0
\]
for all $h\in \cN$.  Note that the hypotheses $P\JK P$ is positive semidefinite  
in Proposition \ref{prop:right}
is equivalent to $Q^*\JK Q\succeq 0$ and to the condition that the range of $\AB$ is $\JK$-nonnegative.
If $\cN$ is $\JK$-nonnegative, then  $[h,g]=\langle \JK h,g\rangle$ defines 
a semi-inner product on $\cN$. In particular,  if $h\in \cN$ and $[h,h]=0$, then
$[h,f]=0$ for all $f\in \cN$ and hence
\[
 \cN^0 =\{h\in \cN: \langle \JK h,h\rangle =0\} \subset \cN
\]
is a subspace, called the  \df{$\JK$-neutral subspace} of $\cN$.  
Now suppose $\cN^+\subset \cN$ is a complementary subspace to $\cN^0$; that
 is $\cN^+\cap \cN^0 =\{0\}$ and $\cN = \cN^+ + \cN^0$. If 
 $h\in \cN^+$ and $h\ne 0$, then
\[
  \langle \JK h,h  \rangle >0. 
\]
 Because $\cN^+$ is finite dimensional, it follows that there is an $\eta>0$ such that
\[
 \langle \JK h,h\rangle \ge \eta \|h\|^2,
\]
for $h\in \cN^+$.  Thus, letting $V:\cN^+\to \C^d$ denote the inclusion, we have 
 $V^*\JK V \succeq \eta I_{\cN^+} >0$.\looseness=-1

\begin{proof}[Proof of Proposition \ref{prop:right}]
For notational purposes, let $\cR$ and $\cR_*$ denote 
$\ran \AB$ and $\ran \AB^*$ respectively. 
Let $\cR^0$ denote the $\JK$-neutral subspace of $\cR$. 
There is a $1\ge \eta>0$
and a subspace $\cR^+\subseteq \cR$ such that,
\begin{enumerate}[({\rm Q}1)]
\item  $\cR^0 + \cR^+ = \cR$ and $\cR^0\cap \cR^+=\{0\}$;
\item  $Q_+^* \JK Q_+ \succeq \eta I_{\cR^+}$, where $Q_+$ denotes the inclusion
of $\cR^+$ into $\C^d.$
\end{enumerate}
Likewise (after changing $1\ge \eta>0$ if needed), 
there exists a subspace $\cR_*^+\subseteq \cR_*$ such that
\begin{enumerate}[({\rm R}1)]
\item  $\cR_*^0 + \cR_*^+ = \cR_*$ and $\cR_*^0\cap \cR_*^+=\{0\}$;
\item   $R_+^* \JK R_+ \succeq \eta I_{\cR_*^+}$, where $R_+$ denotes the inclusion
of $\cR_*^+$ into $\C^d.$
\end{enumerate}

Let $\Phi(x,x^*) = \Lambda_{\AB}(x) + \Lambda_{\AB}(x)^*$. 
There is an $\ve>0$ such that if $X \in\bbB_\ve,$ 
then $\sum_{j=1}^\infty \|\Phi(X,X^*) \|^j <\frac{\eta}{2}$.   It suffices to prove, if
$X\in \bbB_\ve(n)$, then 
$(Q\otimes I_n)^*\Delta(X) (Q\otimes I_n)\succeq 0$ 
and $(R\otimes I_n)^* \Delta(X)  (R\otimes I_n)\succeq 0.$

Suppose $X\in \bbB_\ve(n)$ and thus 
 $\|\Phi(X,X^*)\|<\frac{\eta}{2}\le \frac 12$. In particular,
$I_{dn}-\Phi(X,X^*)(\JK\otimes I_n)$ is
invertible and 
\begin{equation}
\label{e:Del-v-Phi}
 \Delta(X) = (K\otimes I_n -\Phi(X,X^*))^{-1} = [K\otimes I_n] (I-\Phi(X,X^*)[K\otimes I_n])^{-1}.
\end{equation}

 Note, if $\gamma\in \cR^0$ and $\delta\in \C^d$, then $\AB_j\delta \in \cR$
and hence  $\delta^* \AB_j^* \JK \gamma=0$. Thus $\AB_j^* \JK\gamma=0$ and hence,
 for $z\in \C^n,$
\[
\begin{split}
 \Phi(X,X^*) (\JK\otimes I_n)\, (\gamma \otimes z)
   &=   \Phi(X,X^*) (\JK\gamma  \otimes z)\\
   &=  \sum \AB_j\JK\gamma \otimes X_jz  + \sum \AB_j^* \JK\gamma \otimes X_j^*z\\
   &= \sum \AB_j\JK\gamma \otimes X_jz  \in \cR\otimes \C^n.
\end{split}
\]
It follows that
\[
 [I_{dn}-\Phi(X,X^*)(\JK\otimes I_n)]( \gamma\otimes z) \in \cR \otimes \C^n.
\]
Hence
\begin{equation*}
 %\label{e:arrow1}
\cS_X:= [I_{dn}-\Phi(X,X^*)(\JK\otimes I_n)] \cR^0\otimes \C^n  \subseteq  \cR \otimes \C^n.
\end{equation*}
Since $I_{dn}-\Phi(X,X^*) (\JK\otimes I_n)$ is invertible, $\dim \cS_X = n\, \dim \cR^0.$
Furthermore, using equation \eqref{e:Del-v-Phi} and $(Q\otimes I_n)\cS_X =\cS_X,$
\[
\begin{split}
(Q^*&\otimes I_n) \Delta(X) (Q\otimes I_n)\cS_X \\
&= (Q^*\otimes I_n) \Delta(X) [I_{dn}-\Phi(X,X^*)(\JK\otimes I_n)]\cR^0\otimes \C^n \\
%&=  (Q^*\JK\otimes I_n)[I_{dn}-\Phi(X,X^*)(\JK\otimes I)]^{-1} \,  [I_{dn}-\Phi(X,X^*)(\JK\otimes I_n)]\cR^0\otimes \C^n \\
 &=   (Q^*\JK\otimes I_n) \cR^0 \otimes \C^n = [Q^* \JK\cR^0] \otimes \C^n =\{0\},
\end{split}
\]
 since $\cR^0$ is the $K$-neutral subspace of $\cR.$ Thus
\begin{equation*}
%\label{e:arrow3}
  \cS_X %
  \subseteq  \ker (Q^*\otimes I)\, \Delta(X)  \, (Q\otimes I) \subseteq \cR\otimes \C^n.
\end{equation*}

Since $\|\Phi(X,X^*)\|<\frac{\eta}{2}$ and $(Q_+\otimes I_n)^* (\JK\otimes I_n) (Q_+\otimes I_n) \succeq \eta I_{\cR^+}$,
\begin{equation}
\label{e:arrow4}
\begin{split}
( Q_+^* \otimes I_n)\, \Delta(X) \, (Q_+\otimes I_n) 
 & =( Q_+^* \otimes I_n)\, (\JK -\Phi(X,X^*))^{-1} \, (Q_+\otimes I_n) 
\\ & \succeq Q_+^*\JK Q_+\otimes I_n  - \sum_{k=1}^\infty  \|\Phi(X,X^*)\|^k I_{\cR^+}
\succeq \frac{\eta}{2} I_{\cR^+}.
\end{split}
\end{equation}
In particular, $\cS_X \cap (\cR^+\otimes \C^n) =\{0\}.$

Summarizing,
\begin{enumerate}
\item $\cS_X,\, \cR^+\otimes \C^n \subseteq \cR\otimes \C^n$;
\item $(Q^*\otimes I_n)\, \Delta(X)  (Q\otimes I_n) \cS_X = \{0\};$
\item $\dim \cS_X = n\, \dim \cR^0$ and $\dim \cR^+\otimes\C^n=n\, (\dim \cR -\dim \cR^0);$
\item  $(Q_+^*\otimes I_n)\Delta(X) (Q_+\otimes I_n)\succeq \frac{\eta}{2} I_{\cR^+}$ %
(see equation \eqref{e:arrow4});
\item  $\cS_X \cap (\cR^+\otimes \C^n) =\{0\}.$
\end{enumerate}
It follows that $\cS_X\, + [\cR^+\otimes \C^n] = \cR$
 and if $\delta \in \cS_X$
and $\gamma\in \cR^+\otimes \C^n$, then
\[
\begin{split}
 \langle (Q^*\otimes I_n)\Delta(X) & (Q\otimes I_n) (\delta+\gamma),\delta +\gamma \rangle
\\  &  = \langle (Q^*\otimes I_n)\Delta(X) (Q\otimes I_n) \gamma,\gamma\rangle
 \ge \frac{\eta}{2} \|\gamma\|^2\ge  0.
\end{split}
\]
Hence $(Q^*\otimes I_n)\Delta(X) (Q\otimes I_n)\succeq 0$ as desired. By symmetry,
$(R^*\otimes I_n) \Delta(X)  (R\otimes I_n)\succeq 0.$ Thus $Q^*\Delta(x)Q$
and $R^*\Delta(x)R$ are both positive semidefinite in a neighborhood of $0$. 
Thus $r$ is plush by Corollary \ref{cor:plushimplies}.
\end{proof}

\section{Convex nc rational functions}
\label{sec:compose}
 The main result of this section is Theorem \ref{t:intro-cvx}, restated 
 and proved as Proposition \ref{prop:cvx} below.   
 An immediate consequence is the fact that if a symmetric nc rational function
 is convex in a free ball, then it is plush in a free ball.  Thus, combined with
 Corollary \ref{cor:compose-}, Theorem \ref{t:intro-cvx} establishes one-half  of Theorem 
 \ref{t:main-intro}.

 Throughout this section, $f$ denotes the symmetric descriptor realization,
\begin{equation}
\label{e:realizecvx}
 f(x) = v^* (J-\Lambda_A(x)-\Lambda_A(x)^*)^{-1} v,
\end{equation}
 where $\hv$ is a positive integer,  $A\in M_d(\C)^\hv$ and $0\neq v\in\C^d.$  

\begin{prop}\label{prop:cvx}
If  $\rng A + \rng A^*$ is a $J$-nonnegative
 subspace of $\C^d,$ then $f$ is convex in a neighborhood of $0$. 

Conversely, if the realization \eqref{e:realizecvx} is minimal and $f$ is convex in a neighborhood
of $0$, then $\rng A+\rng A^*$ is a $J$-nonnegative subspace of $\C^d.$  
\end{prop}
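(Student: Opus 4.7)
The plan is to mirror the two-sided argument used to prove Theorem~\ref{t:local} (Propositions~\ref{p:left} and~\ref{prop:right}), but with the full Hessian formula~\eqref{eq:trd} in place of the complex Hessian, and with the single subspace $\rng A+\rng A^*$ in place of the pair $\rng B$, $\rng B^*$.

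For the sufficiency direction, suppose $\rng A+\rng A^*$ is $J$-nonnegative, and let $Q$ denote the inclusion of $\rng A+\rng A^*$ into $\C^d$. First I would rerun the geometric argument from the proof of Proposition~\ref{prop:right} verbatim, taking both $\cR$ and $\cR_*$ to be $\rng A+\rng A^*$; this produces an $\ve>0$ such that $(Q\otimes I_n)^*\Delta(X)(Q\otimes I_n)\succeq 0$ for every $X\in\bbB_\ve(n)$. Next I would feed this into~\eqref{eq:trd}: the matrix $M:=\Lam_A(H)+\Lam_A(H)^*$ is self-adjoint with range contained in $[\rng A+\rng A^*]\otimes\C^n$, so $M=(Q\otimes I_n)\wt M(Q\otimes I_n)^*$ for some Hermitian $\wt M$, and hence $M\Delta(X)M\succeq 0$. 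Because $\Delta(X)$ is itself self-adjoint, $f''(X,X^*)[H,H^*]$ is a Hermitian conjugation of this quantity by $\Delta(X)(v\otimes I_n)$ and so is positive semidefinite on $\bbB_\ve$.

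For the necessity direction, the hard part will be that in~\eqref{eq:trd} the coefficient $\Lam_A(H)+\Lam_A(H)^*$ couples $H$ with $H^*$ through a single matrix, so Lemma~\ref{l:span} alone cannot sweep out the full subspace $\rng A+\rng A^*$. My plan is to break this coupling by a polarization trick via block direct sums, in the spirit of the proof of Proposition~\ref{p:realizationplushimplies}. For $X\in\bbB_\ve(n)$ and arbitrary $H,K\in\mat{n}(\C)^\hv$ I would set
\[
\wh X=\begin{bmatrix}X&0\\0&0\end{bmatrix},\qquad \wh H=\begin{bmatrix}0&K\\H&0\end{bmatrix}\in\mat{2n}(\C)^\hv,
\]
and a routine $2\times 2$ block computation then shows that $f''(\wh X,\wh X^*)[\wh H,\wh H^*]$ is block-diagonal with upper-left block
\[
2(v\otimes I_n)^*\Delta(X)L^*(J\otimes I_n)L\,\Delta(X)(v\otimes I_n),
\]
where $L:=\Lam_A(H)+\Lam_A(K)^*$. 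Convexity of $f$ forces this block to be positive semidefinite for all $X\in\bbB_\ve$ and all $H,K$.

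With the coupling broken, I would invoke Lemma~\ref{l:indep} (using minimality) to produce $X$ and $v_0\in\C^n$ such that $z:=\Delta(X)(v\otimes v_0)$ has $d$ linearly independent components in $\C^n$. Testing the inequality above against $v_0$ yields $z^*L^*(J\otimes I_n)Lz\ge 0$ for all $H,K$. Lemma~\ref{l:span} then applies twice independently: once with $E=(A_1,\dots,A_\hv)$ to sweep $\Lam_A(H)z$ over $[\rng A]\otimes\C^n$ as $H$ varies, and once with $E=(A_1^*,\dots,A_\hv^*)$ to sweep $\Lam_A(K)^*z=\Lam_{A^*}(K^*)z$ over $[\rng A^*]\otimes\C^n$ as $K$ varies. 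Consequently $\{Lz\colon H,K\}=[\rng A+\rng A^*]\otimes\C^n$, which forces $Q^*JQ\succeq 0$, i.e.\ $\rng A+\rng A^*$ is $J$-nonnegative.
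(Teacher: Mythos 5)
Your proposal is correct and follows essentially the same route as the paper. For sufficiency, rerunning the geometric argument of Proposition~\ref{prop:right} with $\cR=\cR_*=\rng A+\rng A^*$ is precisely the content the paper abstracts into Lemma~\ref{l:precvx}, and the observation that $M=\Lambda_A(H)+\Lambda_A(H)^*$ factors through $Q\otimes I_n$ completes the step exactly as the paper does; for necessity, your block direct-sum polarization with $\widehat X$, $\widehat H$ and the operator $L=\Lambda_A(H)+\Lambda_A(K)^*$ reproduces the paper's decoupling via $\Psi(H,\wt H)$ with $\wt X=0$, followed by the same invocation of Lemmas~\ref{l:indep} and~\ref{l:span}.
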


\begin{cor}
\label{cor:convex-plush}
 If $f$ is convex, then $f$ is plush.
\end{cor}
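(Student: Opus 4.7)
The plan is to leverage Proposition \ref{prop:cvx} together with the sufficiency half of Theorem \ref{t:local} (i.e., Proposition \ref{prop:right}). Since convexity and plushness are intrinsic properties of the nc rational function $f$ (independent of the chosen realization), we may pass to a minimal symmetric descriptor realization of $f$ of the form \eqref{e:realizecvx}.

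First, assuming $f$ is convex in a neighborhood of $0$, the converse direction of Proposition \ref{prop:cvx} tells us that $\rng A + \rng A^*$ is a $J$-nonnegative subspace of $\C^d$. Next, observe the elementary geometric fact that any linear subspace of a $J$-nonnegative subspace is itself $J$-nonnegative, since the defining inequality $\langle Jh,h\rangle \geq 0$ is inherited by restriction. Applying this to the two subspaces $\rng A$ and $\rng A^*$ (each contained in $\rng A + \rng A^*$), we conclude that both are $J$-nonnegative. In the language of orthogonal projections, if $P$ and $P_*$ denote the projections onto $\rng A$ and $\rng A^*$ respectively, this is exactly $P J P \succeq 0$ and $P_* J P_* \succeq 0$.

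Finally, feed this into the sufficiency direction of Theorem \ref{t:local} (equivalently Proposition \ref{prop:right}, applied with $B=A$, $c=v$), which yields an $\varepsilon > 0$ such that $f$ is plush on the free ball $\bbB_\varepsilon$. In particular $f$ is plush, as claimed.

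The argument has essentially no obstacle; the entire content is an assembly of the preceding characterizations, with the only nontrivial observation being that the $J$-nonnegativity hypothesis for $\rng A + \rng A^*$ supplied by Proposition \ref{prop:cvx} is \emph{stronger} than the pair of conditions on $\rng A$ and $\rng A^*$ separately required by Theorem \ref{t:local}. This asymmetry reflects the fact that convexity is a strictly stronger condition than plushness, consistent with the direction of the implication being established.
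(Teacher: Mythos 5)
Your proof is correct and follows essentially the same route as the paper: invoke the converse half of Proposition \ref{prop:cvx} to get $J$-nonnegativity of $\rng A + \rng A^*$, specialize to $\rng A$ and $\rng A^*$ individually, and feed into the sufficiency direction of Theorem \ref{t:local}. You are slightly more careful than the paper in two places --- you explicitly pass to a minimal realization (needed since the converse of Proposition \ref{prop:cvx} requires minimality, which is not a standing hypothesis of the section), and you spell out that subspaces of a $J$-nonnegative subspace are $J$-nonnegative --- both of which the paper leaves implicit.
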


\begin{proof}
 By Proposition \ref{prop:cvx} both $\rng A$ and $\rng A^*$ are $J$-nonnegative  subspaces.
 An application of Theorem \ref{t:local} completes the proof.
\end{proof}

\begin{cor}
\label{cor:compose}
Suppose $f$ is a symmetric nc rational function
 in $\hv$ variables,  
and $q:M(\C)^\gv\dashrightarrow M(\C)^\hv$ is an analytic nc rational mapping.
If $f$ is convex in a neighborhood of $0$, then $r=f\circ q$ is plush
 in a neighborhood of $0.$
\end{cor}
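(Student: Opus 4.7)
The plan is to chain together two earlier corollaries of this section. First, Corollary \ref{cor:convex-plush} states that a convex symmetric nc rational function is automatically plush, so the assumption that $f$ is convex in a neighborhood of $0$ promotes to $f$ being plush on some free ball $\bbB_\delta \subset M(\C)^\hv$. Second, Corollary \ref{cor:compose-} says that composing a function that is plush on some free ball with an analytic nc rational mapping regular at $0$ and vanishing at $0$ yields a function that is plush on some free ball. Applying this with $f$ playing the role of the inner plush function and $q$ playing the role of the analytic change of variables gives immediately that $r = f \circ q$ is plush on some free ball about the origin in $M(\C)^\gv$, which is the desired conclusion.

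The only point worth flagging is the handling of $q(0)$: Corollary \ref{cor:compose-} explicitly requires $q(0)=0$, while the present corollary leaves this implicit under the standing convention that the rational functions considered here are regular at $0$. With that reading, the corollary follows without any additional work. If one wished to relax this convention, one would try to translate by setting $\tilde{q}(x)=q(x)-q(0)$ and $\tilde{f}(y)=f(y+q(0))$, reducing to $r = \tilde{f}\circ \tilde{q}$ with $\tilde{q}(0)=0$; but that reduction requires convexity of $f$ in a neighborhood of $q(0)$, which goes beyond the stated hypothesis. So the real content of the corollary is the two-step composition argument above, and I do not foresee any substantive obstacle beyond this bookkeeping.
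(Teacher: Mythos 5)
Your proposal is exactly the paper's own two-line argument: invoke Corollary \ref{cor:convex-plush} to upgrade convexity of $f$ to plushness, then apply Corollary \ref{cor:compose-} to $f\circ q$. The side remark about the implicit normalization $q(0)=0$ is a fair observation but does not change the substance.
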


\begin{proof}
 By Corollary \ref{cor:convex-plush}, since $f$ is convex it is plush. 
 The result now follows from Corollary \ref{cor:compose-}.
\end{proof}

The proof of Proposition \ref{prop:cvx} uses Lemma \ref{l:precvx} below.

\begin{lemma}
\label{l:precvx}
 Let $\cJ\in M_d(\C)$ be a signature matrix. If $\cN \subset \C^d$ is a $\cJ$-nonnegative
 subspace, then there is a $\delta>0$ such that if $n$ is a positive integer,
 $T\in M_d(\C)\otimes M_n(\C)$ is selfadjoint, $\ran T \subset \cN \otimes \C^n$  
 and $\|T\|< \delta$, then 
\[
 (P\otimes I_n) \, (\cJ\otimes I_n - T)^{-1}\, (P\otimes I_n) \succeq 0,
\]
where $P$ is the projection onto $\cN.$
\end{lemma}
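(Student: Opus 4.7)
The plan is to abstract the proof of Proposition \ref{prop:right}, since the lemma is precisely the geometric core of that proof with $T$ playing the role of $\Phi(X,X^*)$ and the $\cJ$-nonnegative subspace $\cN$ playing the role of $\ran \AB$. The hypothesis $\ran T\subset \cN\otimes\C^n$ is even cleaner than what one verifies for $\Phi(X,X^*)$, so only the skeleton of the earlier argument has to be transported.

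First I would decompose $\cN = \cN^0 \oplus \cN^+$, where $\cN^0 = \{h\in\cN\colon \langle \cJ h, h\rangle = 0\}$ is the $\cJ$-neutral subspace and $\cN^+$ is a complementary subspace chosen so that $V^*\cJ V \succeq \eta I_{\cN^+}$ for the inclusion $V\colon \cN^+\hookrightarrow \C^d$ and some $\eta\in(0,1]$. Then I would pick $\delta\in(0,1)$ small enough that $\|T\|<\delta$ forces $I_{dn}-T(\cJ\otimes I_n)$ to be invertible and $\sum_{k\ge 1}\|T\|^k<\eta/2$. The Cauchy--Schwarz inequality applied to the positive semidefinite form $[h,k]:=\langle \cJ h,k\rangle$ on $\cN$ yields the key identity $P\cJ\gamma=0$ for every $\gamma\in\cN^0$, which drives the rest of the argument.

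Next, set
\[
\cS_T := \bigl(I_{dn} - T(\cJ\otimes I_n)\bigr)(\cN^0 \otimes \C^n).
\]
The containment $\cS_T\subset \cN\otimes\C^n$ is immediate from $\ran T\subset\cN\otimes\C^n$, and invertibility of $I_{dn}-T(\cJ\otimes I_n)$ gives $\dim\cS_T = n\,\dim\cN^0$. From $\cJ^2=I_d$ one derives the identity $(\cJ\otimes I_n - T)^{-1}\bigl(I_{dn}-T(\cJ\otimes I_n)\bigr) = \cJ\otimes I_n$, which shows $(\cJ\otimes I_n - T)^{-1}\cS_T = \cJ\cN^0\otimes\C^n$; combined with $P\cJ\cN^0 = 0$, this says $(P\otimes I_n)(\cJ\otimes I_n - T)^{-1}$ annihilates $\cS_T$. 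Meanwhile, a Neumann series expansion (verbatim analogue of \eqref{e:arrow4}) gives
\[
(V^*\otimes I_n)(\cJ\otimes I_n - T)^{-1}(V\otimes I_n) \succeq \tfrac{\eta}{2} I_{\cN^+ \otimes \C^n}.
\]

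To finish, one observes $\dim\cS_T + \dim(\cN^+\otimes\C^n) = n\,\dim\cN$, both subspaces sit inside $\cN\otimes\C^n$, and the quadratic form $\langle (P\otimes I_n)(\cJ\otimes I_n - T)^{-1}(P\otimes I_n)\,\cdot\,,\,\cdot\,\rangle$ vanishes on $\cS_T$ while being $\ge \tfrac{\eta}{2}\|\cdot\|^2$ on $\cN^+\otimes\C^n$; hence $\cS_T \cap (\cN^+\otimes\C^n) = \{0\}$ and $\cN\otimes\C^n = \cS_T + (\cN^+\otimes\C^n)$. Decomposing any $u'\in\cN\otimes\C^n$ as $u'=\delta+\gamma$ with $\delta\in\cS_T$, $\gamma\in\cN^+\otimes\C^n$, and using selfadjointness of $(\cJ\otimes I_n - T)^{-1}$ together with the vanishing on $\cS_T$ to kill the cross terms, yields nonnegativity of the form on $\cN\otimes\C^n$; this extends to all of $\C^d\otimes\C^n$ because $(P\otimes I_n)$ flanks the operator, so the image of any $u$ under $P\otimes I_n$ already lies in $\cN\otimes\C^n$. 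The only real obstacle is bookkeeping: in Proposition \ref{prop:right} the containment of $\cS_X$ in $\cR\otimes\C^n$ required the delicate observation $\AB_j^*\JK\gamma = 0$ for $\gamma\in\cR^0$, whereas in the present abstract setting it is free once one posits $\ran T \subset \cN\otimes\C^n$.
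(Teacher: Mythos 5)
Your proof is correct, and you state your strategy explicitly: you abstract the proof of Proposition \ref{prop:right} and replace $\Phi(X,X^*)$ with $T$, importing the ``twisted'' subspace $\cS_T=(I_{dn}-T(\cJ\otimes I_n))(\cN^0\otimes\C^n)$. The paper's own proof of Lemma \ref{l:precvx} is simpler and skips $\cS_T$ entirely: it decomposes $\cN\otimes\C^n=(\cN_0\otimes\C^n)\oplus(\cN_+\otimes\C^n)$, expands $(\cJ\otimes I_n - T)^{-1}=\sum_{j\ge 0}(\cJ\otimes I_n)(T(\cJ\otimes I_n))^j$, and observes that every term pairing against $w_0\in\cN_0\otimes\C^n$ vanishes because $\rng T\subset\cN\otimes\C^n$ and $\langle w_0,(\cJ\otimes I_n)v\rangle=0$ for $v\in\cN\otimes\C^n$; only the $w_+$-diagonal survives and it is bounded below by the Neumann tail. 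It is worth noting that under the lemma's hypotheses, your twist is actually a no-op: since $T=T^*$ and $\rng T\subset\cN\otimes\C^n$, one has $\ker T\supset(\cN\otimes\C^n)^\perp\supset(\cJ\cN_0)\otimes\C^n$, so $T(\cJ\otimes I_n)(\cN_0\otimes\C^n)=\{0\}$ and $\cS_T=\cN_0\otimes\C^n$. In Proposition \ref{prop:right} the twist is genuinely needed because there $\rng\Phi(X,X^*)$ contains $\rng\AB^*$ and is not confined to $\cR\otimes\C^n=\rng\AB\otimes\C^n$; here it is not. So your argument is sound but carries bookkeeping that the cleaner hypothesis renders unnecessary, which is precisely why the paper gives Lemma \ref{l:precvx} a more direct proof rather than reusing the $\cS_X$ machinery.
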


\begin{proof}
 Let $\cN_0$ denote the $\cJ$-neutral subspace of $\cN$. 
 In particular, $P \cJ w_0 = 0$ for $w_0\in \cN_0$.
 Let $\cN_+$ denote the  orthogonal complement of $\cN_0$ in $\cN$. 
 Hence $\cN_0\oplus \cN_+ =\cN$ and $\cN_+$ is a $\cJ$-strictly positive subspace.
 In particular, there is an $\eta>0$ such that
 if $w\in \cN_+$, then $\langle \cJ w,w\rangle \ge \eta \langle w,w\rangle.$
 Choose $\delta =\frac{\eta}{1+\eta}<1$ and note 
 $\sum_{j=1}^\infty \delta^j  =\eta.$

 Now let $n$ be given.
 Let $\tcJ= \cJ\otimes I_n$ and note that $\tcN :=\cN\otimes \C^n$ is 
 $\tcJ$-nonnegative and $\tcN_0:=\cN_0\otimes \C^n$ is its $\tcJ$-neutral subspace.
 Since $\cP =P\otimes I_n$ is the projection onto the $\tcJ$-nonnegative subspace $\tcN$
 and $\tcN_0$ is neutral,
  $\cP  \tcJ w_0=0$ for $w_0 \in \cN_0\otimes \C^n$.  Moreover, if 
 $w\in \tcN_+$, then $\langle \tcJ w,w\rangle \ge \eta \langle w,w\rangle.$

 Fix $T$ as in the statement of the lemma.   Since $\delta<1$, 
 $\tcJ-T$ is invertible with the inverse given by the convergent series\looseness=-1
\[
  (\tcJ  -T)^{-1} = \tcJ + \tcJ\sum_{j=1}^\infty (T \tcJ)^j.
\]
If $w_0\in \tcN_0$ and $w_+\in \tcN_+$, then, since $\langle \tcJ w_0,v\rangle =0=\langle w_0,\tcJ v\rangle$
 for $v\in \tcN$ and since  $\rng T \subset \tcN$,
\[
 \langle w_0, \tcJ(T\tcJ)^j w_+ \rangle =0 =\langle w_0, \tcJ (T\tcJ)^j w_0\rangle,
\]
for all nonnegative integers $j.$
Hence
\[
\begin{split}
\langle (\tcJ-T)^{-1}(w_0+w_+),w_0+w_+\rangle  
 & =  \langle \tcJ w_+,w_+\rangle  + \langle \sum_{j=1}^\infty \tcJ(T\tcJ)^j w_+,w_+\rangle \\
& \ge  (\eta  - \sum_{j=1}^\infty \|T\|^j)\|w_+\|^2 \\
 & \ge  (\eta - \eta)\|w_+\|^2 =0
\end{split}
\]
and the conclusion of the lemma follows.
\end{proof}

\begin{proof}[Proof of Proposition~\ref{prop:cvx}]
 Let $\Phi(x) = \Lambda_A(x)+\Lambda_A(x)^*$ and let 
\[
 \Gamma(x) = \left(J-\Lambda_A(x)-\Lambda_A(x)^*\right)^{-1},
\]
and for $X\in \mat{n}(\C)^\hv$ for which the inverse exists,
\[
 \Gamma(X) = \left(J\otimes I_n-\Lambda_A(X)-\Lambda_A(X)^*\right)^{-1}.
\]

By \eqref{eq:trd},
\[
 f^{\prime\prime}(x,x^*)[h,h^*] = 
 2 v^* \Gamma(x) \Phi(h) \Gamma(x) \Phi(h) \Gamma(x) v.
\]
Moreover, $f$ is convex in a neighborhood of $0$ if and only if there is a
$\eta>0$ such that for all $n$, all  $X\in \bbB_\eta(n)$ and all $H\in \mat{n}(\C)^g$,
\[
 f^{\prime\prime}(X,X^*)[H,H^*]
 = 2 (v^*\otimes I_n) \Gamma(X) \Phi(H) \, \Gamma(X) \, \Phi(H)\Gamma(X)(v\otimes I_n) \succeq 0,
\]
by \cite[Proposition 5.1]{HMV06} and Remark \ref{r:apology}.

Now suppose $\cN =\rng A+\rng A^*$ is $J$-nonnegative. By Lemma \ref{l:precvx},
 there is a $\delta>0$ such that for each $n$ and each tuple $X\in \bbB_\delta(n),$ 
\[
\begin{split}
(P\otimes I_n) \,&  \Gamma(X) \, (P\otimes I_n) \\
 =&  (P^*\otimes I_n) \, (J\otimes I_n-\Lambda_A(X) -\Lambda_A(X)^*)^{-1} (P\otimes I_n) \succeq 0,
\end{split}
\]
where $P$ is the projection onto $\cN$. Since $\Phi(H)$ maps into the range
of $P\otimes I_n$, it follows that $f^{\prime\prime}(X,X^*)[H,H^*]$ is positive semidefinite
 for $X\in \bbB_\delta.$  Thus $f$ is convex on $\bbB_\delta.$

Conversely, suppose there is an $\ve>0$ such that $f$ is convex
on $\bbB_\ve \subset M(\C)^\hv.$  Without loss of generality we may
assume the realization of equation \eqref{e:realizecvx} is minimal. 

For $H,\wtH\in M(\C)^\hv$ let
$$\Psi(H,\wtH)=\Lambda_A(H)+\Lambda_A(\wtH)^*.$$
 Given $X,\wtX,H,\wtH \in \mat{n}(\C)^\hv$,   let
\[
 \YY =\begin{bmatrix} X& 0\\0&\wtX \end{bmatrix}, \ \  
 \KK= \begin{bmatrix} 0&H\\\wtH &0\end{bmatrix},
\]
let 
\[
\begin{split}
 f_\downarrow(X,\wtX)[H,\wtH ]& =  \Gamma(X) \Psi(H,\wtH)  \Gamma(\wtX)\Psi(H,\wtH)^*  \Gamma(X), \\
 f_\uparrow(X,\wtX)[H, \wtH ] & =  \Gamma(\wtX)\Psi(H,\wtH)^* \Gamma(X)\Psi(H,\wtH)  \Gamma(\wtX),
\end{split}
\]
and observe 
\[
 \Phi(\KK) = \begin{bmatrix} 0 & \Psi(H,\wtH) \\ \psi(H,\wtH)^* & 0 \end{bmatrix}
\]
 and therefore
\[
\begin{split}
 f^{\prime\prime}(\YY,\YY^*)[\KK,\KK^*] = 
 2(v\otimes I_{2n})^* & \begin{bmatrix} f_\downarrow(X,\wtX)[H,\wtH] & 0\\
 0& f_\uparrow(X,\wtX)[H,\wtH] \end{bmatrix}
 (v\otimes I_{2n}).
\end{split}
\]
Hence, since  $f^{\prime\prime}(\YY,\YY^*)[\KK,\KK^*]$  is positive semidefinite
for $\YY\in \bbB_\ve(2n)$ and $\KK\in M_{2n}(\C)^\hv$, 
\[
\begin{split}
(v\otimes I_n)^* \, f_\downarrow(X,\wtX)[H,\wtH] \, (v\otimes I_n)&\succeq0,\\ \
(v\otimes I_n)^* \, f_\uparrow(X,\wtX)[H,\wtH] \,  (v\otimes I_n)
& \succeq 0,
\end{split}
\]
for all $X,\wtX\in \bbB_\ve(n)$  and  $H,\wtH\in \mat{n}(\C)^\hv.$  In particular,
 for each $X\in \bbB_\ve(n)$   and  $H,\wtH\in \mat{n}(\C)^\gv,$
\[\begin{split}
  0 & \preceq 
   (v\otimes I_n)^* f_\downarrow(X,0)[H,\wtH] (v\otimes I_n) \\
   & =  (v\otimes I_n)^* \Gamma(X) \Psi(H,\wtH) (J\otimes I_n)\Psi(H,\wtH)^* \Gamma(X) (v\otimes I_n).
\end{split}\]
Using minimality of the realization for $f$, 
by Lemmas \ref{l:indep} and \ref{l:span} there exist $X\in\bbB_\ve(n)$ and $u\in\C^n$ such that the set
$$\{\Psi(H,\wtH)\Gamma(X)(v\otimes u): H,\wtH\in\mat{n}(\C)^\hv \}$$
spans $(\rng A+\rng A^*)\otimes \C^n$.
Hence $PJP\succeq 0$, where $P$ is the projection onto $\rng A+\rng A^*$.
\end{proof}

\section{Plush rationals are composite of a convex with an analytic}
\label{sec:main-result}

\def\ltAplay{{\begin{bmatrix} D \\ 0  \\ \gamma^* \\ \gamma^* \end{bmatrix}}}
\def\rtAplay{{\begin{bmatrix} 0 & D_* & \gamma_* &  \gamma_* \end{bmatrix}}}

\def\rtA{{\phi_*}}
\def\ltA{{\phi}}
\def\tr{\widetilde r}

\def\ttq{{\tt q}}
\def\ttf{{\tt f}}

\def\basisw{{\bw}}

\def\ltAb{\psi}
\def\rtAb{\psi_*}

\def\bbA{{\mathbb A}}

In this section we prove Theorem \ref{t:main-intro}, restated as 
Theorem \ref{thm:main} below. It is the main result of this paper.

\begin{theorem}\label{thm:main}
Suppose $r$ is a symmetric nc rational function.
 If $r$ is plush in a neighborhood of the origin,
 then there exists a positive integer $\hv$, a convex nc rational function $f$
 in $\hv$ variables, 
 and an analytic nc rational mapping $q:M(\C)^\gv\dashrightarrow M(\C)^\hv$ such that $r=f\circ q$. 
 Moreover, a choice of $f$ and $q$ is explicitly constructed from
 a minimal realization of $r$. See formulas \eqref{eq:f} and
\eqref{eq:q} and Subsection \ref{sssec:dep}.
\end{theorem}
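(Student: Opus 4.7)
The plan is as follows. Start with a minimal symmetric descriptor realization
\[
r(x,x^*) = c^*(K - \Lambda_B(x) - \Lambda_{B^*}(x^*))^{-1} c
\]
of size $d$. By Theorem~\ref{t:isplush-intro}, the plushness of $r$ guarantees that both $\ran B$ and $\ran B^*$ are $K$-nonnegative subspaces of $\C^d$. I would aim to construct an enlargement of this realization in which the analytic coefficient tuple $B$ is replaced by a new tuple $A$ (acting on a larger ambient space) whose combined analytic plus antianalytic range is $L$-nonnegative for a new signature matrix $L$, at the cost of simultaneously replacing the variable tuple $x$ by an analytic nc rational mapping $q(x)$ of potentially larger arity. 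Such a replacement, by Theorem~\ref{t:intro-cvx}, turns $f(y,y^*)=v^*(L - \Lambda_A(y) - \Lambda_{A^*}(y^*))^{-1} v$ into a convex rational function, while the identity $r = f \circ q$ recovers the original function.

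The construction proceeds by choosing, inside each of $\ran B$ and $\ran B^*$, a splitting into a strictly $K$-positive part and a $K$-neutral (isotropic) part, as in the complementary subspace argument underlying Proposition~\ref{prop:right}. The strictly positive parts can be preserved as they are and will populate the ``easy'' diagonal blocks of the new pencil for $f$. The $K$-neutral parts are the source of difficulty: their directions in $\C^d$ can be awkwardly aligned with respect to the indefinite pairing and cannot be placed into a convex pencil without introducing nonlinearity in the base variables. That nonlinearity is absorbed into $q$: for each neutral direction one introduces a component $q_j(x)$ defined as an analytic nc rational function obtained from a Schur complement of the original resolvent $\Delta(x)$. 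An explicit formula for $q$ is to be given in \eqref{eq:q}, and one for $f$ in \eqref{eq:f}.

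With $f$ and $q$ in hand, two verifications are required. First, that $f$ is genuinely convex in a free neighborhood of $0$; this reduces, via Theorem~\ref{t:intro-cvx}, to the statement that the combined range of the new analytic and antianalytic tuples lies in an $L$-nonnegative subspace, which one reads off from the decomposition in the previous paragraph block-by-block. Second, that the composition identity $r = f \circ q$ holds; this is a block-resolvent identity computation: inverting the enlarged pencil of $f$ at the point $y=q(x)$ by Schur complements and observing that the off-diagonal blocks produced by the $K$-neutral directions are exactly cancelled by the substitution $y_j \mapsto q_j(x)$.

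The main technical obstacle is this simultaneous juggling: the enlargement of the pencil must introduce enough new structure to force the combined range to be $L$-nonnegative (convexity of $f$), while the compensating analytic substitution must reproduce $r$ exactly. The balance is possible only because $q$ may be rational rather than merely polynomial, which allows its components to ``undo'' the resolvent structure arising from the $K$-neutral directions in $\ran B$ and $\ran B^*$. A secondary subtlety, to be handled in Subsection~\ref{sssec:dep}, is that the resulting realization of $f \circ q$ need not be minimal, so care must be taken when appealing to uniqueness of realizations; this is managed by comparing the two rational functions on an explicit common free neighborhood of $0$ rather than through minimality arguments.
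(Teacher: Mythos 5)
Your high-level plan is correct in outline: start from a minimal realization, use Theorem~\ref{t:isplush-intro} to see that $\rng B$ and $\rng B^*$ are $K$-nonnegative, build a larger pencil $(L,A,v)$ whose combined range $\rng A+\rng A^*$ is $L$-nonnegative (so that $f$ is convex by Theorem~\ref{t:intro-cvx}), and simultaneously supply an analytic rational $q$ with $r=f\circ q$. That is exactly the paper's target. The construction you propose for getting there, however, has a genuine gap at its central step: you take $q_j(x)$ to be ``an analytic nc rational function obtained from a Schur complement of the original resolvent $\Delta(x)$.'' But $\Delta(x)=(K-\Lambda_B(x)-\Lambda_B(x)^*)^{-1}$ depends on both $x$ and $x^*$, and so do all of its Schur complements with respect to any fixed decomposition of $\C^d$; there is no reason these would be analytic (free of $x^*$). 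Without analyticity of $q$, the whole factorization $r=f\circ q$ is the wrong shape. You also misidentify the source of the difficulty: the $K$-neutral parts of $\rng B$ and $\rng B^*$ are individually harmless (indeed in Proposition~\ref{prop:right} they are precisely the directions that contribute zero). The real obstacle is that although $\rng B$ and $\rng B^*$ are each $K$-nonnegative, the sum $\rng B+\rng B^*$ generally is not, so the indefinite \emph{interaction} between the analytic and antianalytic blocks is what must be transferred into $q$.

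The paper resolves this in a way your sketch does not reach. It embeds $\rng B$ and $\rng B^*$ into maximal $K$-nonnegative subspaces, introduces their angular operators $\rho,\rho_*$, and uses the factorization $B_j=\begin{bmatrix}I\\ \rho\end{bmatrix}E_j\begin{bmatrix}I&\rho_*^*\end{bmatrix}$ to prove the moment-matching identity of Lemma~\ref{l:crazyPB}, $B_\ell^*K[\psi^*J\psi]KB_j=B_\ell^*KB_j$, which is what makes the geometric-series (alternating-word) expansions of $r$ and $\ttf\circ\ttq$ agree. The analytic map $q$ is \emph{not} a Schur complement of $\Delta$; it is the enumeration of all analytic words $w(x)$, $w\in\axplus$, compressed to finitely many variables using a basis $\{C_1,\dots,C_\hv\}$ of the algebra generated by $\{KB_1,\dots,KB_\gv\}$ and the associated structure constants $\Xi$, producing the convexotonic map $b(y)=y(I-\Lambda_\Xi(y))^{-1}$ and then $q_s(x)=b_s(x,0,\dots,0)$. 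None of this machinery---maximal nonnegative subspaces, angular operators, the factorization of $B_j$, the algebra generated by $KB$, or the structure constants---appears in your plan, and the cancellation you want from ``off-diagonal blocks exactly cancelled by the substitution $y_j\mapsto q_j(x)$'' is, as stated, a hope rather than an argument. You would need to replace the Schur-complement device with something that actually produces an analytic $q$ and an identity that forces $r=f\circ q$; the paper's convexotonic/angular-operator approach is the mechanism that accomplishes both at once.
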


\subsection{A formal recipe for $f$ and $q$} 
\label{sec:formal}
We may assume $r$ is a minimal descriptor realization as in formula \eqref{e:ratstate}.
There exist nonnegative integers $\ap$ and $\bq$ such that
\[
 K=\begin{bmatrix} I_\ap & 0 \\ 0 & -I_\bq \end{bmatrix}.
\]
 Since $r$ is, by assumption, plush in a neighborhood of $0$,
both  $\rng B$ and $\rng B^*$ are  $K$-nonnegative
by Theorem \ref{t:local}. Hence we may assume $\ap\ge 1$ (as otherwise $r$ is constant).
 Likewise, we may assume $\bq\ge 1$ as otherwise $r$ is convex in a neighborhood of $0$,
 and therefore
 plush by Corollary \ref{cor:convex-plush}, and the conclusion of the theorem
 follows  upon choosing $q(x)=x.$ and $f=r.$
 %, by Proposition \ref{prop:cvx}.

A subspace $\cP$ is a \df{maximal $K$-nonnegative subspace} if 
$\cP$ is $K$-nonnegative if $\cN$ is nonnegative 
 with $\cP\subset \cN$, then $\cN=\cP$. 
It is well known that, in this case, the dimension of $\cP$ is $a$
and moreover, there is a contraction $\rho:\C^\ap \to \C^\bq,$
 known as the \df{angular operator} for $\cP$ \cite{A79},  such that
 $\cP$ is the range of the map
\[
\begin{bmatrix} I_\ap \\ \rho \end{bmatrix} : \C^\ap \to \C^\ap\oplus \C^\bq.
\]

Let $\ao,\ao_*:\C^\ap \to \C^\bq$ denote the angular operators for 
maximal $K$-nonnegative 
subspaces $\cP$ and $\cP_*$ containing $\rng B$ and $\rng B^*$ respectively. 
 Let $P, P_*$  denote the orthogonal projections
  onto $\cP$ and $\cP_*$ respectively. 
Let  $\ax$ denote the set of words in $x_1,\dots,x_\gv$
and { $\axplus = \ax\setminus \{1\}$};
these are analytic words (no $x_j^*$s).

 If $Q$ is a positive semidefinite matrix, then, up to unitary equivalence, 
it is of the form $Q_+\oplus 0$, where $Q_+$ is positive definite. 
 Hence, again up to unitary equivalence,
 the Moore-Penrose pseudoinverse $Q^\dagger$ of $Q$ takes the form $Q_+^{-1}\oplus 0$. 
 In particular, the ranges of $Q$ and $Q^\dagger$ are the same. 
 Let $D$ and $D_*$ denote the positive (semidefinite) square roots of
 $I_{\ap}-\ao^*\ao$ and $I_{\ap}-\ao_*^*\ao_*,$ respectively.  Define
$\ltAb: %
\C^{\ap+\bq}  \to
 \C^\ap \oplus \C^\ap \oplus \C \oplus \C $
 and
$\rtAb:
 \C^\ap \oplus \C^\ap \oplus \C \oplus \C  \to \C^{\ap+\bq}$ by
\[
\ltAb:={{\begin{bmatrix} 
D^\dagger \bem I_\ap & \ao^*\eem 
 \\
0_{ \ap\times (\ap+\bq)} 
\\ 
c^*
 \\ 
c^*
 \end{bmatrix}}}
\qquad 
\text{and} 
\qquad
\rtAb:={{ \begin{bmatrix}
0_{(\ap+\bq)\times \ap} & \
\bem I_\ap \\\rho_* 
\eem D_*^\dagger \ \ \
c 
 & 
c
 \end{bmatrix}.
}}
\]
The definition of the formal %state space 
representation $(J,\bbA,v)$ of $\ttf$
 is as follows. 
\begin{enumerate}
 \item \label{i:forma11}
Define, for each 
${w\in{\axplus}}$, 
\[
\bbA_w:= \ltAb \; w(KB) \; P_*K \rtAb 
= \ltAb \; w(KB) \; K \rtAb 
\in M_{2\ap+2}(\C).
\]
\item  Let 
\begin{equation}
 \label{d:formalJ}  
    J=I_\ap\oplus I_\ap\oplus1\oplus-1 \in M_{2\ap+2}(\C)
\end{equation} 
 and 
\begin{equation*}
 %\label{d:formalv}
v=\begin{bmatrix}0\\ 0 \\ s \\ t \end{bmatrix} \in 
\C^\ap\oplus \C^\ap\oplus \C\oplus\C =\C^{2\ap+2}.
\end{equation*}
Here we take
 $s,t\in \C$ such that $s-t=1$ and $s+t=c^*Kc$ 
 (hence $s^2-t^2 = c^* K c$).
\end{enumerate}
The expression  
\[
\begin{split}
 \ttf(y) & :=  v^* \left( J -\sum_{w\in{\axplus}} 
( \bbA_w y_w  + \bbA_w^* y_w^*) \right)^{-1}v \\
& := v^* \left( J -\sum_{n=1}^\infty \sum_{|w|=n}
( \bbA_w y_w  + \bbA_w^* y_w^*) \right)^{-1}v
\end{split}
\]
defines a formal power series in infinitely many variables $y_w,y_w^*$;
more precisely, it is an element of the completion of $\C\langle y_w,y_w^*:w\in\axplus\rangle$  with respect to the descending chain of ideals
\[J_n=\Big(y_{w_1}\cdots y_{w_\ell}\colon \sum_{k=1}^\ell |w_k|=n\Big).\]
In the spirit of Proposition \ref{prop:cvx} one could say that $\ttf$ is {\it formally convex.}
Let 
\[
{\ttq}(x_1,\dots,x_\gv)={\ttq}(x) = (w)_{w\in \axplus}.
\]
Thus $\ttq$ is an analytic polynomial mapping with infinitely many outputs.

\begin{theorem}
\label{t:formal}
  Viewing $y_w=q_w(x)=w(x)$ and composing $\ttf$ with $\ttq$ gives
\begin{equation*}
%\label{eq:formalCompose}
r(x) = \ttf (\ttq(x)),
\end{equation*}
in the ring of formal power series. 
\end{theorem}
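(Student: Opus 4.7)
The plan is to verify the identity coefficient-by-coefficient in the formal power series ring. Expanding $r(x)=c^*(K-\Lam_B(x)-\Lam_B(x)^*)^{-1}c$ as a Neumann series, the coefficient of a word $W=x_{j_1}^{e_1}\cdots x_{j_n}^{e_n}\in\lax$ is $c^*KB_{j_1}^{(e_1)}K\cdots KB_{j_n}^{(e_n)}Kc$, where $B^{(1)}=B$ and $B^{(*)}=B^*$. Similarly expanding $\ttf(\ttq(x))=v^*(J-\Theta)^{-1}v$ with $\Theta=\sum_w(\bbA_ww(x)+\bbA_w^*w(x)^*)$, the coefficient of $W$ becomes a sum over all decompositions $W=w_1^{\epsilon_1}\cdots w_m^{\epsilon_m}$ with $w_i\in\axplus$, $\epsilon_i\in\{1,*\}$, of the scalars $v^*J\bbA_{w_1}^{(\epsilon_1)}J\cdots J\bbA_{w_m}^{(\epsilon_m)}Jv$, and the goal is to match the two.

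I would first establish the following identities by direct matrix computation from the definitions of $\ltAb$, $\rtAb$, $J$, $v$ (using $s-t=1$):
\begin{align*}
v^*J\ltAb=v^*J\rtAb^*=c^*,&\quad\ltAb^*Jv=\rtAb Jv=c,\\
\rtAb J\ltAb=0,&\quad\ltAb^*J\rtAb^*=0,\\
\rtAb J\rtAb^*=R_*D_*^{\dagger 2}R_*^*=:\beta_*,&\quad\ltAb^*J\ltAb=RD^{\dagger 2}R^*=:\beta,
\end{align*}
where $R=\begin{bmatrix}I\\\rho\end{bmatrix}$ and $R_*=\begin{bmatrix}I\\\rho_*\end{bmatrix}$ parametrize $\cP$ and $\cP_*$. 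The vanishing identities annihilate every decomposition with two consecutive same-type $\epsilon_i$'s, leaving only the unique alternating canonical decomposition of $W$. For it, the contribution reduces to a product
\[c^*\cdot u_1(KB)^{(\epsilon_1)}\cdot\Pi_{12}\cdot u_2(KB)^{(\epsilon_2)}\cdots u_m(KB)^{(\epsilon_m)}\cdot c,\]
with each junction operator $\Pi_{i,i+1}$ being $K\beta_*K$ if $(\epsilon_i,\epsilon_{i+1})=(1,*)$ or $\beta$ if $(\epsilon_i,\epsilon_{i+1})=(*,1)$.

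The heart of the argument is the \emph{sandwich identity}: for all $u,u'\in\axplus$,
\[u(KB)\cdot K(\beta_*-K)K\cdot u'(KB)^*=0\quad\text{and}\quad u(KB)^*\cdot(\beta-K)\cdot u'(KB)=0\]
as operators on $\C^d$. For the first: given $v\in\C^d$, the leftmost factor of $u'(KB)^*$ is some $B_{k_q}^*$, so $u'(KB)^*v\in\rng B^*\subseteq\cP_*$, hence $Ku'(KB)^*v=KR_*u$ for some $u$. Combining $R_*^*KR_*=D_*^2$ with the Moore--Penrose identity $D_*^{\dagger 2}D_*^2=I-\Pi_{\ker D_*}$ gives $(\beta_*-K)(KR_*u)=-R_*\Pi_{\ker D_*}u$; the inner-product computation $\langle KR_*\Pi_{\ker D_*}u,R_*w\rangle=\langle D_*^2\Pi_{\ker D_*}u,w\rangle=0$ then puts $K(\beta_*-K)K\cdot u'(KB)^*v$ in $\cP_*^\perp$. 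Since $\rng B^*\subseteq\cP_*$ forces $\cP_*^\perp\subseteq\ker B$, the rightmost factor $B_{j_p}$ of $u(KB)$ annihilates this vector. The second identity is symmetric, using $\cP^\perp\subseteq\ker B^*$.

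Because these are genuine operator equalities, each junction $\Pi_{i,i+1}$ can be replaced by $K$ independently and iteratively. After all replacements the canonical contribution becomes the letter-by-letter Neumann coefficient of $W$ in $r$, finishing the match. The main obstacle is the sandwich identity, which is also the point where plushness enters essentially: it provides $K$-nonnegativity of $\rng B$ and $\rng B^*$ (Theorem~\ref{t:local}), thereby furnishing the maximal $K$-nonnegative subspaces $\cP,\cP_*$, the angular operators $\rho,\rho_*$, and the positive square roots $D,D_*$ that make the entire construction coherent.
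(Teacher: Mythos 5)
Your proposal is correct and follows the same overall strategy as the paper: Neumann-expand both sides, use the vanishing identities $\rtAb J\ltAb=0=\ltAb^*J\rtAb^*$ (equivalently $\bbA_wJ\bbA_u=0=\bbA_w^*J\bbA_u^*$) to kill all but the alternating decompositions of each word, collapse each junction to $K$ via a "sandwich" lemma, and then match moments using $v^*J\ltAb=v^*J\rtAb^*=c^*$, $\ltAb^*Jv=\rtAb Jv=c$, and $v^*Jv=c^*Kc$. The paper packages the alternating terms into the generating series $\tOm,\tGm$ and proves the junction lemma (Lemma~\ref{l:crazyPB}) by an explicit $E_j$-matrix computation via $B_j=\begin{bmatrix}I\\\rho\end{bmatrix}E_j\begin{bmatrix}I&\rho_*^*\end{bmatrix}$, whereas you work coefficient-by-coefficient and give a cleaner geometric proof: show $K(\beta_*-K)K$ applied to any vector of $\cP_*$ lands in $\cP_*^\perp\subseteq\ker B$, so the next $B$-factor annihilates it. Both reduce to the same Moore--Penrose identity $D_*^{\dagger 2}D_*^2=I-\Pi_{\ker D_*}$. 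One substantive point in your favor: your sandwich identity correctly uses $\beta_*=\rtAb J\rtAb^*$ for the $(1,\ast)$-junction and $\beta=\ltAb^*J\ltAb$ for the $(\ast,1)$-junction. The paper's Lemma~\ref{l:crazyPB} as printed has $\psi^*J\psi$ in \emph{both} identities, and the line ``Similarly, $\tOm(x)K\ltAb J\ltAb^*\tGm(x^*)=\tOm(x)\tGm(x^*)$'' does not even parse dimensionally ($\ltAb J\ltAb^*$ acts on $\C^{2\ap+2}$, not $\C^{\ap+\bq}$); these are typos for $\psi_*J\psi_*^*$ and $\rtAb J\rtAb^*$ respectively, which your version repairs. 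Minor quibble: in your opening description the coefficient of $W=x_{j_1}^{e_1}\cdots x_{j_n}^{e_n}$ in $r$ should read $c^*(KB_{j_1}^{(e_1)})\cdots(KB_{j_n}^{(e_n)})Kc$; the stray interior $K$'s in your formula are extraneous.
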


Theorem \ref{t:formal} is proved in Subsection \ref{sec:tformalproof} and it is used
in the proof of Theorem \ref{thm:main} appearing in Subsection \ref{sec:precise}.
 Referring to the variables $y_w,y_w^*$ as \df{intermediate variables},
 $\ttf$ depends on infinitely many intermediate variables and $\ttq$, while 
 a function of the variables $x$, outputs the intermediate variables. 
 In Subsection \ref{sec:precise} as part of the proof of Theorem
 \ref{thm:main},  rational $f$ and $q$ are constructed using only
 finitely many intermediate variables.

\subsection{Proof of Theorem~\ref{t:formal}}
\label{sec:tformalproof}
Let $\SA$ denote the set of strictly alternating words in two letters $\{\bx,\by\}$.
Hence, $\bx \by\bx\by,\bx \by\bx\by\bx,$ and
$\by \bx \by\bx\by$ are
examples of such  words. 
We do not include
the empty word in $\SA$.  Using the fact that 
$\rtAb J \ltAb =0,$ and hence 
$\bbA_w J \bbA_u =0$  for $u,w\in \axplus$, compute
 \begin{equation}
\label{e:sumoveralt}
\begin{split}
\ttf(\ttq(x)) & =  
v^* \left (I - \Lambda_{J\bbA}(\ttq(x)) -\Lambda_{J\bbA^*}(\ttq(x)^*) \right )^{-1} Jv \\
& = v^* \left [\sum_{k=1}^\infty  \left (\Lambda_{J\bbA}(\ttq(x)) + \Lambda_{J\bbA^*}(\ttq(x)^*) \right )^n  \, \right ] Jv
   + v^*Jv \\
& =  v^* \left[\sum_{w\in \SA} 
 w(\Lambda_{J\bbA}(\ttq(x)),\Lambda_{J\bbA^*}(\ttq(x)^*)) \right ] Jv
   + v^*Jv.
\end{split}
\end{equation}
The next and longest part of the argument simplifies
$w(\Lambda_{JA}(q(x)),\Lambda_{JA^*}(q(x)^*))$ for $w\in \SA$.

\begin{lemma}
 \label{l:crazyPB}  For $1\le j,\ell \le \gv$,
\begin{equation*}
%\label{e:crazyB}
\begin{split}
   B_\ell^* K\, [\psi^* J \psi]\,  KB_j &=  B_\ell^* KB_j \\
   B_\ell K\, [\psi^* J \psi]\, KB_j^* &=   B_\ell KB_j^*.
\end{split}
\end{equation*}
\end{lemma}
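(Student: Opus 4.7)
The plan is to simplify $\psi^{*}J\psi$ in closed form and then exploit the geometry of the $K$-nonnegative subspace $\cP\supseteq\rng B$. A direct block multiplication using $J=I_{\ap}\oplus I_{\ap}\oplus 1\oplus(-1)$ and $(D^{\dagger})^{2}=(I_{\ap}-\rho^{*}\rho)^{\dagger}$ (since $D$ is the PSD square root of $I_{\ap}-\rho^{*}\rho$) gives
\[
\psi^{*}J\psi=V(I_{\ap}-\rho^{*}\rho)^{\dagger}V^{*},\qquad V:=\bem I_{\ap}\\ \rho\eem,
\]
with the two $cc^{*}$ contributions from the last two rows cancelling because the matching $J$-entries are $+1$ and $-1$. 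In particular $\rng\psi^{*}J\psi\subseteq\rng V=\cP$.

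Since $V^{*}KV=I_{\ap}-\rho^{*}\rho$, for any $u=Vw\in\cP$ one computes
\[
\psi^{*}J\psi\,Ku=V(I_{\ap}-\rho^{*}\rho)^{\dagger}(I_{\ap}-\rho^{*}\rho)w=V\Pi w,
\]
where $\Pi$ is the orthogonal projection of $\C^{\ap}$ onto $\rng(I_{\ap}-\rho^{*}\rho)=(\ker D)^{\perp}$. Hence $(K\psi^{*}J\psi K-K)u=-Ku_{0}$, where $u_{0}:=V(I-\Pi)w$ lies in the $K$-neutral part $\cP^{0}$ of $\cP$. The Cauchy--Schwarz inequality for the PSD semi-inner product $[u,u']_{K}:=\langle Ku,u'\rangle$ on $\cP$ forces every neutral vector to be $K$-orthogonal to all of $\cP$, so $K\cP^{0}\subseteq\cP^{\perp}$.

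For the first identity, set $u=B_{j}v\in\rng B\subseteq\cP$; then $u_{0}$ is the $K$-neutral component of $B_{j}v$ and, since $\rng B_{\ell}\subseteq\cP$ gives $\cP^{\perp}\subseteq\ker B_{\ell}^{*}$, one has $B_{\ell}^{*}Ku_{0}=0$, so
\[
B_{\ell}^{*}K\psi^{*}J\psi KB_{j}v-B_{\ell}^{*}KB_{j}v=-B_{\ell}^{*}Ku_{0}=0.
\]
The second identity is the completely symmetric statement on the dual side: running the identical computation with $(\psi,V,\rho,D,\cP)$ replaced by $(\psi_{*},V_{*},\rho_{*},D_{*},\cP_{*})$ yields $\psi_{*}J\psi_{*}^{*}=V_{*}(I_{\ap}-\rho_{*}^{*}\rho_{*})^{\dagger}V_{*}^{*}$, and the containments $\rng B_{j}^{*},\rng B_{\ell}^{*}\subseteq\cP_{*}$ (so that $\cP_{*}^{\perp}\subseteq\ker B_{\ell}$) produce $B_{\ell}K[\psi_{*}J\psi_{*}^{*}]KB_{j}^{*}=B_{\ell}KB_{j}^{*}$. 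A small $2\times2$ example with $\rho\neq\rho_{*}$ shows that the literally written version with $\psi^{*}J\psi$ on the second line cannot hold, so the intended object there is $\psi_{*}J\psi_{*}^{*}$. The main obstacle is the opening block computation of $\psi^{*}J\psi$; once it is recognised as $V(I_{\ap}-\rho^{*}\rho)^{\dagger}V^{*}$, the $K$-neutral-subspace geometry takes over.
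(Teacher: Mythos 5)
Your argument is correct, and after the opening step it takes a genuinely different route from the paper's. Both proofs start by computing $\psi^*J\psi=\begin{bmatrix} I_\ap\\ \rho\end{bmatrix}(D^\dagger)^2\begin{bmatrix} I_\ap & \rho^*\end{bmatrix}$ (the two $cc^*$ blocks cancel against the $\pm1$ entries of $J$). From there the paper substitutes the factorization $B_j=\begin{bmatrix} I_\ap\\ \rho\end{bmatrix}E_j\begin{bmatrix} I_\ap & \rho_*^*\end{bmatrix}$ of \eqref{eq:BE} and finishes algebraically using $\begin{bmatrix} I_\ap & \rho^*\end{bmatrix}K\begin{bmatrix} I_\ap\\ \rho\end{bmatrix}=I_\ap-\rho^*\rho$ and $(I_\ap-\rho^*\rho)(D^\dagger)^2(I_\ap-\rho^*\rho)=I_\ap-\rho^*\rho$. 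You instead leave $B_j$ untouched and argue in Krein-space terms: for $u\in\cP$ one has $(K\psi^*J\psi K-K)u=-Ku_0$ with $u_0$ in the $K$-neutral subspace of $\cP$, and Cauchy--Schwarz for the semi-inner product $\langle K\cdot,\cdot\rangle$ on $\cP$ puts $Ku_0$ into $\cP^\perp\subseteq\ker B_\ell^*$. Both proofs are short; yours makes transparent that the first line needs only $\rng B\subseteq\cP$ and the second only $\rng B^*\subseteq\cP_*$, whereas the paper's sandwich through \eqref{eq:BE} (which itself uses both range inclusions) is more mechanical once that factorization is in hand.

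You are also right to read the second line with $\psi_*J\psi_*^*$ in place of $\psi^*J\psi$; as printed it is false. For instance, take $\ap=\bq=1$, $\rho=0$, $\rho_*=t\in(0,1)$ and $B=\begin{bmatrix}1&t\\0&0\end{bmatrix}$, so that $\rng B\subseteq\cP$ and $\rng B^*\subseteq\cP_*$ with both subspaces maximal $K$-nonnegative; then
\[
B K\,[\psi^*J\psi]\,K B^*=\begin{bmatrix}1&0\\0&0\end{bmatrix}
\neq\begin{bmatrix}1-t^2&0\\0&0\end{bmatrix}=BKB^*.
\]
The identity the paper actually invokes afterwards, when simplifying $\Lambda_{J\mathbb{A}}(q(x))\,\Lambda_{J\mathbb{A}^*}(q(x)^*)$, is the one in which $\psi_*J\psi_*^*=\begin{bmatrix} I_\ap\\ \rho_*\end{bmatrix}(D_*^\dagger)^2\begin{bmatrix} I_\ap & \rho_*^*\end{bmatrix}$ sits between $B_\ell K$ and $KB_j^*$, and that is exactly what your symmetric computation with the starred data establishes; it is also what the paper's ``similar fashion'' remark intends. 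So your proposal proves the statement in the form in which it is used, and flags the misprint correctly.
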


The proof of Lemma \ref{l:crazyPB} uses the following construction.
First note that the projection $P$ onto $\cP$ is given by 
\[
 P = \begin{bmatrix} I \\ \ao \end{bmatrix} (I+\ao^*\ao)^{-1}
   \begin{bmatrix} I & \ao^* \end{bmatrix}
\]
 and a similar formula holds for $P_*,$  the projection onto $\cP_*.$ Set
\[
 \EE_j = (I+\ao^* \ao)^{-1}    \begin{bmatrix} I & \ao^* \end{bmatrix} \,
        B_j  \, \begin{bmatrix} I \\ \ao_* \end{bmatrix} (I+\ao_*^* \ao_*)^{-1}
  \in M_\ap(\C).
\]
Thus,
\[
 P B_j P_* = \begin{bmatrix} I \\ \ao \end{bmatrix} \, \EE_j \,
   \begin{bmatrix} I & \ao_*^* \end{bmatrix}.
\]
Finally, since $(\ker B_j)^\perp = \rng B_j^* \subseteq \cP_*$, it follows that
 $PB_jP_*= B_j$. Hence,
\begin{equation}
 \label{eq:BE}
     B_j =  \begin{bmatrix} I \\ \ao \end{bmatrix} \, \EE_j \,
   \begin{bmatrix} I & \ao_*^* \end{bmatrix}.
\end{equation}

\begin{proof}[Proof of Lemma~\ref{l:crazyPB}]
 Compute,
\[
 \psi^* J\psi = \begin{bmatrix} I \\ \rho \end{bmatrix} \,
   (D^\dagger)^2 \begin{bmatrix} I & \rho^* \end{bmatrix}.
\]
Thus, using formula \eqref{eq:BE}, 
$(I-\rho^*\rho) (D^\dagger)^2 (I-\rho^*\rho) = D^2 (D^\dagger)^2 D^2=I-\rho^*\rho$
 and 
\[
 \begin{bmatrix} I_\ap & \rho^* \end{bmatrix} \, K \, \begin{bmatrix} I_\ap \\ \rho \end{bmatrix} = I-\rho^* \rho,
\]
 it follows that
\[
\begin{split}
 B_\ell^*    K\psi^* J \psi KB_j 
    = & \begin{bmatrix} I \\ \rho_* \end{bmatrix} E_\ell^*  (I-\rho^*\rho) 
          (D^\dagger)^2 (I-\rho^*\rho) E_j \begin{bmatrix} I & \rho_*^*\end{bmatrix} \\
  = &  \begin{bmatrix} I \\ \rho_* \end{bmatrix} E_\ell^*  (I-\rho^*\rho) 
          E_j \begin{bmatrix} I & \rho_*^*\end{bmatrix} \\
   = & \, B_\ell^* KB_j.
\end{split}
\]
 The other identity can be proved in a similar fashion. We omit the details.
\end{proof}

For notational purposes, let
$\tOm$ and $\tGm$ denote  the formal power series
\begin{equation*}
  %\label{eq:tildeLa}
\tOm(x) = \sum_{n=1}^\infty  (K\Lambda_B(x))^n
   =  \sum_{w\in\axplus} w(KB) \, w(x) = \sum_{j=1}^\gv KB_j \sum_{w\in \ax} w(KB) \, x_j w(x)
\end{equation*}
and
\begin{equation*}
 %\label{eq:tildeGa}
   \tGm(x^*) = \sum_{n=1}^\infty (K (\Lambda_B(x))^*)^n
  = \sum_{w\in \axplus} K w(KB)^*K\,  w(x)^* = K \tOm(x)^* K.
\end{equation*}
With these notations,
\begin{equation}
 \begin{split}
 \label{eq:lamJA}
 \Lambda_{J \bbA}(\ttq(x)) & =  \sum_{w\in{\axplus}}  J \bbA_w \ttq_w(x)
 =
 \sum_w J  \ltAb \, w(KB) P_* K   \, \rtAb  \;  w(x)
\\ & =
J \ltAb \big[ \sum_w    \, w(KB)  w(x) \big] \; P_* K    \, \rtAb 
 =   J \ltAb \; \tOm(x) \;   P_* K   \rtAb \\
  &=   J \ltAb \; \tOm(x) \;   K   \rtAb  
 \end{split}
 \end{equation}
and 
%Similarly,
\begin{equation}
 \begin{split}
 \label{eq:lamJAs}
 \Lambda_{J \bbA^*}(\ttq(x)^*) 
 & =  
 \sum_{w\in{\axplus}}  J \bbA_w^* \ttq_w(x)^*
 = \sum_w J  \rtAb^*   K P_* \, w(KB)^*   \, \ltAb^*  \;  w(x)^*\\
&  =   
 J \rtAb^* K P_* \ \tOm(x)^* \;    \ltAb^* \\
 &=   
 J \rtAb^* K \ \tOm(x)^* \;    \ltAb^* = J \rtAb^*\; \tGm(x^*) K \ltAb^*.
 \end{split}
 \end{equation}
Further, using Lemma \ref{l:crazyPB},
\begin{equation}
\label{e:crazyPBapplied}
 \tGm(x^*) K\,  [\psi^* J \psi] \tOm(x) 
   = \tGm(x^*)\, \tOm(x).
\end{equation}
Combining equations \eqref{eq:lamJA}, \eqref{eq:lamJAs} and \eqref{e:crazyPBapplied} gives
   \def\Des2{{\Delta_*^2}}
\[
\begin{split}
 \Lambda_{J \bbA^*}(\ttq(x)^*) 
 \Lambda_{J \bbA}(\ttq(x)) 
 &=    J \ltAb_*^*\; [\tGm(x^*) K \psi^* J \ltAb \; \tOm(x)] \;   K   \rtAb  \\
 &=   J \ltAb_*^*\; [\tGm(x^*)  \tOm(x)] \;   K   \rtAb.
\end{split}
\]
Similarly,
\begin{equation*}
 \tOm(x) K \ltAb J \ltAb^* \tGm(x^*) = \tOm(x)  \tGm(x^*).
\end{equation*}
Thus,
\[
\begin{split}
 \Lambda_{J \bbA}(\ttq(x))  \Lambda_{J \bbA^*}(\ttq(x)^*) 
 &=  J \ltAb \; [\tOm(x)  \rtAb J \rtAb^*   \tGm(x^*)]\;  K     \ltAb^* \\
  &=J \ltAb \; [\tOm(x)\ \tGm(x^*)]\;  K     \ltAb^*.
\end{split}
\]

Next turn to an  alternating word, say  $w(\bx,\by)=\bx \by \cdots \bx \by$
where $\bx$ and $\by$ each appear $N$ times.
Writing  $\tOm$ and $\tGm$ instead of $\tOm(x)$ and $\tGm(x^*)$
and computing as above,  
\begin{equation}
\label{e:xyxy}
\begin{split}
 w(\Lambda_{J\bbA}(\ttq(x)),   \Lambda_{J\bbA^*}(\ttq(x)^*))
&=   J \ltAb \; \tOm \;\tGm \; \tOm \cdots  \tGm \; \tOm \; \tGm \; K \ltAb^* \\
 &=  J\psi \; w(\tOm,\tGm)\; K\psi^*.
\end{split}
\end{equation}

The last step in the proof of Theorem \ref{t:formal}  is to match moments as follows.
 The right hand side of equation \eqref{e:xyxy} is the sum over all terms of the  form
\[
T= J \ltAb \;  (K\Lambda_B(x))^{n_1} (K\Lambda_B(x)^*)^{m_1} \cdots 
   (K\Lambda_B(x))^{n_N} (K\Lambda_B(x)^*)^{m_N} \; K\ltAb^*,
\]
 for positive integers $n_j,m_j$.  
Further, 
\[
 v^* T Jv = c^* (K\Lambda_B(x))^{n_1} (K\Lambda_B(x)^*)^{m_1} \cdots 
   (K\Lambda_B(x))^{n_N} (K\Lambda_B(x)^*)^{m_N} \; Kc
\]
and 
\[
 v^* J v= s^2-t^2 = c^*Kc.
\]
Hence, letting $\mathcal T$ denote all possible products of the form $T$ (save
for the empty product) and $\axyplus$ the nonempty  words in $(x,x^*)$, 
\begin{equation}
\label{e:(J,A)v(K,B)}
\begin{split}
v^* [ \sum_{w\in \SA}  &
 w(\Lambda_{J\bbA}(\ttq(x)),\Lambda_{J\bbA^*}(\ttq(x)^*)) ]  Jv
   + v^*Jv
 =   \sum_{T\in\mathcal T} v^*TJv + v^*Jv 
\\ &=  c^*\, [ \sum_{u\in \axyplus}  u(KB,KB^*) ]\, Kc  + c^*Kc 
 \\ & = c^*(K-\Lambda_B(x)-\Lambda_B(x)^*)^{-1} c,
\end{split}
\end{equation}
since  the sum over $w\in \SA$ gives all possible products of 
$K \Lambda_B(x), K \Lambda_B(x)^*$ save for the empty product ($I$). 
Combining equations \eqref{e:(J,A)v(K,B)} and \eqref{e:sumoveralt}
completes the proof of Theorem \ref{t:formal}.

\subsection{Proof of Theorem~\ref{thm:main}}
\label{sec:precise}
In this section  Theorem \ref{thm:main} is deduced from Theorem \ref{t:formal}.
It is possible to prove Theorem \ref{thm:main} directly.

\subsubsection{A recipe for $f$ and $q$ having finitely many intermediate variables}
  In the construction of $\ttf$ and $\ttq$ in Subsection \ref{sec:formal}  the intermediate space 
 has infinitely many variables. In this subsection that construction is 
 refined, under the additional assumption that $\{B_1,\dots,B_\gv\}$
 is linearly independent,  to produce rational  convex $f$ and analytic $q$ 
 having an intermediate space with finitely 
 many variables that are shown, in Subsection \ref{sec:endofproof}, 
 to satisfy the conclusion of Theorem \ref{thm:main}. Finally, Subsection \ref{sssec:dep} 
 shows how to pass from linear dependence to independence of the set $\{B_1,\dots,B_\gv\}.$

To construct $f$ and $q$, 
let $\{C_1,\dots,C_\hv\}$ denote a basis for the algebra generated by $\{KB_1,\dots,KB_\gv\}$
and, without loss of generality, assume $C_j=KB_j$ for $1\le j\le \gv$
(since we are assuming $\{B_1,\dots,B_\gv\}$ is linearly independent)
and for $\gv+1\le j \le \hv,$  that
\[
C_j=\basisw_j(KB)  
\]
for some (non-empty) word $\basisw_j.$ Note that $\hv\le (\ap+\bq)^2$ as $KB_j\in M_{\ap+\bq}(\C).$
In particular,  we can set $\basisw_j = x_j$ for $1\le j\le \gv$.

There is an $\hv$-tuple $\Xi\in M_\hv(\C)^\hv$ 
such that  for each $1\le j,k\le \hv$,
\begin{equation}
\label{e:definesXi}
 C_j C_k =\sum_{s=1}^\hv  (\Xi_k)_{j,s} C_s,
\end{equation}
though we will be mostly interested in $1\le j,k\le \gv$. 
Moreover, for $1\le j\le \hv$ and a word $w$ in $(x_1,\dots,x_\hv)$,
\begin{equation}
 \label{e:Xicontonic}
 C_j \, w(C) = \sum_{s=1}^\hv w(\Xi)_{j,s} C_s, 
\end{equation}
by \cite[Lemma 2.5]{HKMV}.

Define $f$ and $q$ as follows.
\ben
\item
Let $J$ denote the symmetry matrix from equation \eqref{d:formalJ} and,
for $1 \leq s \leq \hv,$ define
\begin{equation*}
 %\label{d:defbillA}
  A_s:= \bbA_{\basisw_s} = \ltAb \; \basisw_s(KB) \; P_*K \rtAb 
=\ltAb \; \basisw_s(KB) \; K \rtAb.
\end{equation*}
 %\eqref{eq:f} 
Set
\begin{equation}\label{eq:f}
 f(y) = v^* (J-\Lambda_A(y) -\Lambda_A(y)^*)^{-1} v,
\end{equation}
where $A=(A_1, \dots A_\hv)\in M_{2a+2}(\C)^\hv$ and
$y=(y_1,\dots,y_\hv)$. 

Since
\[
\rng A + \rng A^* \subseteq \{ \begin{bmatrix} D^\dagger \begin{bmatrix} I_\ap &  \rho^*\end{bmatrix}  u  \\  
    D^\dagger_* \begin{bmatrix} I_\ap & \rho_*^* \end{bmatrix}  v  \\ 
     c^* (u+z) \\ c^* (u+z)  \end{bmatrix}: u,z  \in \C^\ap \oplus \C^\bq \},
\]
it follows that  $\rng A+\rng A^*$ is $J$-nonnegative and therefore
$f$ is convex, by Proposition \ref{prop:cvx}.

\item
Let  
$b(y)= \begin{bmatrix} b_1(y) & \dots & b_\hv(y) \end{bmatrix}$ 
denote the 
%convexotonic
map associated to $\Xi$ by
\begin{equation}
\label{eq:contonMap}
 b(y) = y (I-\Lambda_\Xi(y))^{-1} .
\end{equation}
For $1\le s \le \hv$, let
\begin{equation}\label{eq:q}
q_s(x)= b_s(x_1,\dots,x_\gv,0,\dots,0) = \sum_{j=1}^\gv \sum_{w\in {\ax}} (w(\Xi))_{j,s} x_j  w.
\end{equation}
\een
Evidently $q=\begin{bmatrix} q_1 & \cdots & q_\hv \end{bmatrix}$ 
is analytic and rational.

\begin{rem}
The nc rational mapping $b(y)$ of  \eqref{eq:contonMap}, 
associated to a tuple $\Xi$ satisfying
  \eqref{e:definesXi},  is a  \df{convexotonic} map,
see \cite[Section 1.1 and Lemma 2.5]{HKMV}.
Up to linear change of variables and an irreducibility
assumption, convexotonic maps are the only bianalytic
 maps between free spectrahedra \cite{AHKM18,HKMV}.
\end{rem}

\subsubsection{Proof that $r=f\circ q$}
\label{sec:endofproof}
Since $r=\ttf\circ\ttq$ by Theorem \ref{t:formal}, both $f$ and $q$ are rational, $f$ is convex  and $q$
 is analytic,  Theorem \ref{thm:main} 
in the case that $\{B_1,\dots,B_\gv\}$
 is linearly independent
 is a consequence
 of Proposition \ref{p:doesntmatter}.

\begin{prop}
\label{p:doesntmatter}
 $\ttf(\ttq(x))=f(q(x))$.
\end{prop}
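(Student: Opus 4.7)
The plan is to show directly that $\Lambda_{\bbA}(\ttq(x)) = \Lambda_A(q(x))$ as formal power series; taking adjoints and substituting into the two realizations will then immediately give $\ttf(\ttq(x)) = f(q(x))$. Although $\ttf$ involves infinitely many matrices $\bbA_w$ indexed by $w \in \axplus$, each of these lies in the finite-dimensional span of $A_1,\dots,A_\hv$, because $w(KB)$ expands in the basis $\{C_1,\dots,C_\hv\}$ of the algebra it generates. Turning that expansion into an identity of formal sums is exactly the content of the proposition.

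Concretely, for a word $u = x_{i_1} x_{i_2}\cdots x_{i_n} \in \axplus$, since $C_j = KB_j$ for $1\le j\le \gv$ we have $u(KB) = C_{i_1} C_{i_2}\cdots C_{i_n}$. A single application of \eqref{e:Xicontonic} with $j = i_1$ and $w = x_{i_2}\cdots x_{i_n}$ gives
\[
u(KB) \;=\; \sum_{s=1}^{\hv} \alpha_{u,s}\, C_s, \qquad \alpha_{u,s} \;:=\; \bigl(w(\Xi)\bigr)_{i_1,s}.
\]
Multiplying this identity on the left by $\psi$ and on the right by $K\rtAb$, and using $A_s = \psi\, C_s\, K\rtAb$, yields
\[
\bbA_u \;=\; \sum_{s=1}^{\hv} \alpha_{u,s}\, A_s.
\]

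With this in hand, substitute $y_u = u(x)$ into the sum appearing inside $\ttf$ and interchange the sums:
\[
\sum_{u\in\axplus}\bbA_u\, u(x) \;=\; \sum_{s=1}^{\hv} A_s \Bigl(\sum_{u\in\axplus}\alpha_{u,s}\, u(x)\Bigr).
\]
Each $u\in\axplus$ factors uniquely as $u = x_j w$ with $1\le j\le \gv$ and $w\in\ax$, in which case $\alpha_{u,s} = (w(\Xi))_{j,s}$. Comparing with the definition \eqref{eq:q} of $q_s$, the parenthesized inner sum is exactly $q_s(x)$. Hence $\Lambda_{\bbA}(\ttq(x)) = \Lambda_A(q(x))$, and adjoining gives $\Lambda_{\bbA^*}(\ttq(x)^*) = \Lambda_A(q(x))^*$. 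Substituting both into the defining expression for $\ttf$ produces $f(q(x))$.

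The only real obstacle is the bookkeeping between indexing words in $\axplus$ and pairs $(j,w)$ with $1\le j\le \gv$ and $w\in\ax$, but this is a one-line consequence of freeness of the monoid. Convergence is not an issue, as the identity is established in the ring of formal power series in $(x,x^*)$ in which both sides of Theorem \ref{t:formal} already live.
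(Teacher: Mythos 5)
Your proof is correct and takes essentially the same route as the paper's: you establish $\Lambda_{\bbA}(\ttq(x)) = \Lambda_A(q(x))$ via the identity \eqref{e:Xicontonic} and then substitute into the two realizations, which is exactly the paper's Lemma~\ref{l:contonicmagic}. The only difference is cosmetic — you first expand each individual $\bbA_u$ as a linear combination of the $A_s$ and then interchange the two sums, whereas the paper manipulates the full sum $\sum_{u\in\axplus}$ in one chain of equalities — but the underlying argument is identical.
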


\begin{proof}
Since
\[
 \ttf(\ttq(x)) = v^*\left(I-\Lambda_{J\bbA}(\ttq(x))-\Lambda_{J\bbA^*}(\ttq(x)^*)\right )^{-1} Jv
\]
 and
\[
 f(q(x)) = v^* \left(I-\Lambda_{JA}(q(x))-\Lambda_{JA^*}(q(x)^*)\right )^{-1} Jv,
\]
the conclusion  follows from Lemma \ref{l:contonicmagic} below.
\end{proof}

\begin{lemma}
\label{l:contonicmagic} 
 With notations as above,
\begin{equation*}
 %\label{eq:conton1}
 \Lambda_{A}(q(x))  = \Lambda_{\bbA}(\ttq(x)).
\end{equation*}
\end{lemma}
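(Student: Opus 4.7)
The plan is to factor the constant matrices $\ltAb$ (on the left) and $K\rtAb$ (on the right) out of both sides and reduce the identity to
\[
\sum_{s=1}^\hv \basisw_s(KB)\, q_s(x) \;=\; \sum_{u\in\axplus} u(KB)\, u(x),
\]
an identity in $M_{\ap+\bq}(\C)\otimes\C\ax$, where the constant matrix factors commute with the polynomial factors since they live in disjoint tensor slots. This reduction is legitimate because $A_s=\ltAb\,\basisw_s(KB)\,K\rtAb=\bbA_{\basisw_s}$ by definition of $A_s$ and of $\bbA_w$, and the scalar-valued coefficients $q_s(x)$ and $u(x)$ slide past the constant prefactors $\ltAb$ and $K\rtAb$.

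To establish the reduced identity, I substitute the explicit formula \eqref{eq:q} for $q_s(x)$ on the left-hand side and reorganize:
\[
\sum_{s=1}^\hv C_s\, q_s(x)=\sum_{j=1}^\gv\sum_{w\in\ax}\left(\sum_{s=1}^\hv (w(\Xi))_{j,s}\, C_s\right) x_j\, w(x).
\]
Next I invoke the convexotonic identity \eqref{e:Xicontonic}, which collapses the parenthesized inner sum to $C_j\,w(C_1,\dots,C_\gv)$. Since $C_j=KB_j$ for $1\le j\le\gv$ and $w$ is a word in those first $\gv$ variables only, we have $C_j\,w(C_1,\dots,C_\gv)=KB_j\cdot w(KB)=(x_jw)(KB)$. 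The bijection
\[
\axplus\ni u\longleftrightarrow (j,w)\in\{1,\dots,\gv\}\times\ax,\qquad u=x_j\,w,
\]
then matches the resulting double sum with $\sum_{u\in\axplus} u(KB)\,u(x)$, which is the right-hand side.

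There is no serious obstacle; the lemma is, in essence, the bookkeeping statement that the finite-variable convexotonic tuple $\Xi$ built from the algebra generated by $KB_1,\dots,KB_\gv$ precisely repackages the formal-power-series identity behind Theorem \ref{t:formal}. The only point requiring a modicum of care is matching the word-level indexing: the range $w\in\ax$ in the expansion of $q_s(x)$ corresponds, via $u=x_jw$, to the range $u\in\axplus$ on the right, and the definitional equality $C_j=KB_j$ ensures the matrix factors line up correctly. With this bookkeeping in hand, the identity is immediate from \eqref{e:Xicontonic}.
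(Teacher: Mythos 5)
Your proof is correct and follows essentially the same path as the paper: you expand $q_s$ via equation \eqref{eq:q}, apply the convexotonic identity \eqref{e:Xicontonic} (which the paper records in the specialized form \eqref{e:CjCa}), and reindex the double sum over pairs $(j,w)$ by $u=x_jw\in\axplus$. The only cosmetic difference is that you factor $\ltAb$ and $K\rtAb$ off both sides at the outset and argue from $\Lambda_A(q(x))$ toward $\Lambda_{\bbA}(\ttq(x))$, whereas the paper carries those constant factors along and runs the chain of equalities in the opposite direction.
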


 Recall the notation $C_j = \basisw_j(KB)$ for $1\le j\le \gv$ and that $C_j = KB_j$ for $1\le j\le \gv$.
 Thus, by equation \eqref{e:Xicontonic}, for $1\le j \le \gv$ and $w\in \ax$, 
\begin{equation}
\label{e:CjCa} 
    KB_j  \; w(KB)  = \sum_{s=1}^\hv \;    w(\Xi)_{j,s}  \; \basisw_s(KB) 
  = \sum_{s=1}^\hv \, w(\Xi)_{j,s} C_s.
\end{equation}

\begin{proof}
 Using the identity in  equation \eqref{e:CjCa} in the fourth  equality 
 \begin{equation*} %\label{eq:Return}
   \begin{split}
\Lambda_{\bbA}(\ttq(x)) 
= & \ \ltAb \ \Lambda_{\big (w(KB)\big )_{\axplus} }(\ttq(x) ) \; P_* K\rtAb
\\ = & \ \ltAb \  \sum_{w \in {\axplus}}  w(KB)    \ \ttq_w(x)  \ P_*K \rtAb 
\\ = & \  \ltAb \  \sum_{u=1}^\gv \sum_{w \in {\ax}}  [ KB_u w(KB)] \;  x_u w(x)  \ P_*K \rtAb  
\\ = & \ 
   \ltAb \   \sum_{u=1}^\gv \sum_{w \in {\ax}} 
 [\sum_{j=1}^\hv \;    (w(\Xi))_{u,j} C_j  \; ] \;x_u  w(x)  \ P_*K \rtAb 
\\  = & \ 
 \sum_{j=1}^\hv  \ltAb \; C_j  \; P_*K \rtAb   \ [ \sum_{u=1}^\gv \sum_{w\in {\ax}}
 (w(\Xi))_{u,j} x_u \, w(x) ] 
 \\   = & \  \sum_{j=1}^\hv  \ltAb \; [ \basisw_j(KB) \; P_*K \rtAb ] \;  q_j(x) \\
 = &  \ \sum_{j=1}^\hv  A_j  q_j(x)  = \Lambda_A(q(x)). \qedhere
  \end{split}
 \end{equation*}
\end{proof}

\subsubsection{Linearly dependent $B_j$}\label{sssec:dep}
 To complete the proof of Theorem \ref{t:main-intro}, suppose, without loss of
 generality, that $1\le\kv\le \gv$ and $\{\Bp_1,\dots,\Bp_\kv\}$ is a basis for the span
 of $\{B_1,\dots,B_\gv\}$.  Let 
\[
 \rp(y) = c \left (K-\sum_{j=1}^\kv \Bp_j y_j  - \sum \Bp_j^* y_j^* \right )^{-1} c.
\]
Thus $\rp$ is a symmetric descriptor realization. There is a $\gv\times\kv$ matrix
 $M$ such that $r(x) = \rp(Mx)$. Moreover, since $\ran \Bp =\ran B$
 and $\ran \Bp^* = \ran B^*$ and since $r$ is assumed plush, Theorem \ref{t:local}
 implies $\rp$ is also plush.  Thus, by what has already been proved,  there exists a positive
 integer $\hv,$ an analytic nc rational mapping
 $\qp:M(\C)^{\kv} \dashrightarrow M(\C)^\hv$  and a convex nc rational function $f$ (in $\hv$ variables)
 such that $\rp(y) = (f\circ \qp)(y)$. Set $q(x)=\qp(Mx).$ Thus $q$ is an analytic
 nc rational mapping and $r=f\circ q.$

\end{document}